\Crefname{ALC@unique}{Line}{Lines} 
\crefname{hypothesis}{Hypothesis}{Hypotheses}
\DeclareMathOperator{\diag}{diag}
\definecolor{blue}{rgb}{0,0,1}
\definecolor{red}{rgb}{1,0,0}
\definecolor{green}{rgb}{0,1,0}
\definecolor{orange}{rgb}{1,.5,0}
\newdimen\iwidth
\newdimen\iheight
\newcommand{\bs}[1]{\boldsymbol{#1}}
\author{Julianne Chung \and Eric de Sturler \and Jiahua Jiang}
\title{Hybrid Projection Methods with Recycling for Inverse Problems\thanks{Updated \today.
\funding{This work was funded by NSF DMS 1654175 and NSF DMS 1720305}}}
\author{Julianne Chung\thanks{Department of Mathematics, Computational Modeling and Data Analytics Division, Academy of Integrated Science, Virginia Tech, Blacksburg, VA
  (\email{jmchung@vt.edu}, \url{https://www.math.vt.edu/people/faculty/chung-julianne}).}
\and Eric de Sturler\thanks{Department of Mathematics, Computational Modeling and Data Analytics Division, Academy of Integrated Science, Virginia Tech, Blacksburg, VA
  (\email{sturler@vt.edu}, \url{https://www.math.vt.edu/people/faculty/desturler-eric}).}
  \and Jiahua Jiang\thanks{Department of Mathematics, Virginia Tech, Blacksburg, VA
    (\email{jiahua@vt.edu}, \url{https://intranet.math.vt.edu/people/jiahua}).}}
\begin{document}

\maketitle

\begin{abstract}
Iterative hybrid projection methods have proven to be very effective for solving large linear inverse problems due to their inherent regularizing properties as well as the added flexibility to select regularization parameters adaptively.
In this work, we develop Golub-Kahan-based hybrid projection methods that can exploit compression and recycling techniques in order to solve a broad class of inverse problems where memory requirements or high computational cost may otherwise be prohibitive. 
For problems that have many unknown parameters and require many iterations, hybrid projection methods with recycling can be used to compress and recycle the solution basis vectors to reduce the number of solution basis vectors that must be stored, while obtaining a solution accuracy that is comparable to that of standard methods. If reorthogonalization is required, this may also reduce computational cost substantially.  
In other scenarios, such as streaming data problems or inverse problems with multiple datasets, hybrid projection methods with recycling can be used to efficiently integrate previously computed information for faster and better reconstruction.

Additional benefits of the proposed methods are that various subspace selection and compression techniques can be incorporated, standard techniques for automatic regularization parameter selection can be used, and the methods can be applied multiple times in an iterative fashion. Theoretical results show that, under reasonable conditions, regularized solutions for our proposed recycling hybrid method remain close to regularized solutions for standard hybrid methods and reveal important connections among the resulting projection matrices. Numerical examples from image processing show the potential benefits of combining recycling with hybrid projection methods.
\end{abstract}

\begin{keywords}
Golub-Kahan bidiagonalization, hybrid projection methods, recycling, compression, inverse problems,
streaming data problems,
deconvolution, tomography, reduced basis methods
\end{keywords}

\thispagestyle{plain}

\section{Introduction}
\label{sec:introduction}
Inverse problems arise in many applications, where the goal is to approximate some unknown parameters of interest from indirect measurements or observations. For large-scale problems where the regularization parameter is not known in advance, iterative hybrid projection methods can be used to simultaneously estimate the regularization parameter and compute regularized solutions.
However, one of the main disadvantages of hybrid methods compared to standard iterative methods is the need to store the basis vectors for solution computation, which can present significant computational bottlenecks if many iterations are needed or if there are many unknowns. Furthermore, these methods are typically embedded within a larger problem that needs to be solved (e.g., optimal experimental design or nonlinear frameworks), so it may be required to solve a sequence of inverse problems (e.g., where the forward model is parameterized such that the change in the model from one problem to the next is relatively small) or to compute and update solutions from streaming data. Rather than start each solution computation from scratch, we assume that a few vectors for the solution subspace can be provided, and our goals are to improve upon the given subspace and to compute a regularized solution efficiently in the improved subspace.
In this paper, we develop \textit{recycling} Golub-Kahan-based hybrid projection methods that combine a recycling Golub-Kahan bidiagonalization (recycling GKB) process with tools from compression to solve a broad class of inverse problems.

We consider linear inverse problems of the form,
\begin{equation}
 \label{eq:linear problem}
 \bfb = \bfA \bfx_\true + \bfepsilon,
\end{equation}
where $\bfA \in \bbR^{M \times N}$
 models the forward process, $\bfb \in \bbR^M$ contains observed data, $\bfx_\true\in\bbR^N$ represents the desired parameters, and $\bfepsilon \in \bbR^M$ is noise or measurement error.  Given $\bfb$ and $\bfA$, the goal is to compute an approximation of $\bfx_\true$.  In this work, we are interested in solving the Tikhonov regularized problem,
\begin{equation}
	\label{eq:Tikhonov}
	\min_\bfx \norm[2]{\bfA \bfx - \bfb}^2 + \lambda^2 \norm[2]{\bfx}^2
\end{equation}
where $\lambda \geq 0$ is a (yet-to-be-determined) regularization parameter that balances the data-fit term and the regularization term.  We remark that extensions to the general-form Tikhonov problem can be made, which often requires a transformation to standard form \cite{hansen2010discrete}. Although the Tikhonov problem has been studied for many years, various computational challenges have motivated the development of hybrid iterative projection methods for computing an approximate solution to \cref{eq:Tikhonov}.  Basically, in a hybrid projection method, the original problem is projected onto small subspaces of increasing dimension and the projected problem is solved using variational regularization.  By regularizing the projected problem, hybrid methods can stabilize the convergence behavior of the method, and the regularization parameter does not need to be known in advance.
An additional benefit is that these iterative methods can handle problems where matrices $\bfA$ and $\bfA\t$ are so large that they can not be constructed but can be accessed via function evaluations.

In this paper, we propose hybrid projection methods that combine recycling techniques to improve a given solution subspace with an efficient approach to compute a regularized solution to the projected problem, with automatic regularization parameter selection. The general approach consists of three steps, which can be used in an iterative fashion. First, we begin with a suitable set of orthonormal basis vectors, denoted $\bfW_{k-1} \in \mathbb{R}^{N \times (k-1)}$.  This may be provided (e.g., from a related problem or from expert knowledge) or may need to be determined (e.g., via compression of previous solutions). With an initial guess of the solution, $\bfx^{(1)}$, the second step is to use a recycling GKB process to generate vectors that span a particular Krylov subspace, contained in $\wt{\bfV}_{\ell}\in \bbR^{N \times \ell}$, and extend the solution space to be $\calR\left(\begin{bmatrix} \bfW_{k-1} & \bfx^{(1)}&\wt{\bfV}_{\ell}\end{bmatrix} \right)$ where $\calR(\cdot)$ denotes the column space of a matrix.
The third step is to find a suitable regularization parameter $\lambda$ and compute a solution to the regularized projected problem,
  \begin{equation}
    \label{eq:regprojproblem}
    \min_{\bfx \in \calR\left(\begin{bmatrix} \bfW_{k-1} & \bfx^{(1)}& \wt{\bfV}_{\ell}\end{bmatrix}\right)} \norm[2]{\bfA\bfx - \bfb}^2 + \lambda^2 \norm[2]{\bfx}^2.
  \end{equation}
  The main approach (corresponding to steps 2 and 3) is described in \cref{sec:recyclingGKB} and \cref{sec:HyBRrecycle}, and some compression approaches that can be used in step 1 are provided in \cref{sec:compression_approaches}.

Recycling techniques for iterative methods have been considered for multiple Krylov solvers and a wide range of applications, but mainly  for square system matrices and for well-posed problem \cite{parks2006recycling,soodhalter2014krylov,Wang.TopOptRecyc.2007,KilmerdeSturler2006,Ahuja.Recyc-BiCGStab-MOD.2015,S.2016,jin2007parallel,keuchel2016combination,feng2009parametric,Recyc-Imp-Tom.2010}. Augmented LSQR methods have been described in \cite{baglama2013augmented,baglama2005augmented} for well-posed least-squares problems that require many LSQR iterations.  By augmenting Krylov subspaces using harmonic Ritz vectors that approximate singular vectors associated with the small singular values, this approach can reduce computational cost by using implicit restarts for improved convergence.  However, when applied to ill-posed inverse problems, the augmented LSQR method without an explicit regularization term exhibits semiconvergence behavior whereby the reconstructions eventually become contaminated with noise and errors.
Other approaches for augmenting or enriching Krylov subspaces are described in \cite{hansen2019hybrid,calvetti2003enriched,hochstenbach2010subspace}, where Krylov subspaces are combined with vectors containing important information about the desired solution (e.g., a low-dimensional subspace).   These methods can improve the solution accuracy by incorporating information about the desired solution into the solution process, but the improvement in accuracy \textit{significantly} depends on the quality of the provided vectors.  Modifications of conjugate gradient and TSVD are described in \cite{calvetti2003enriched, hochstenbach2010subspace}. A hybrid enriched bidiagonalization (HEB) method that stably and efficiently augments a ``well-chosen enrichment subspace'' with the standard Krylov basis associated with LSQR is described in \cite{hansen2019hybrid}.  Contrary to the HEB method, our recycling GKB method generates the extension subspace vectors $\wt{\bfV}_{\ell}$ such that we \emph{improve} on the space, rather than just augment it. Thus, as we will demonstrate in \cref{sec:numerics}, our approach can handle a wider range of problems and provide more accurate solutions.

The paper is organized as follows.  In \cref{sec:background}, we provide a brief overview on hybrid projection methods, where we focus on methods based on the standard Golub-Kahan bidiagonalization (GKB) process. Then in \cref{sec:recyclinghybrid}, we propose new hybrid projection methods that are based on the recycling GKB process and describe techniques for incorporating regularization automatically and efficiently. We also describe some examples of compression methods that can be used in Step 1 of the proposed approach and provide theoretical results.  In particular, we investigate the impact of compression and recycling on the projected problem and show important results that relate the regularized solution from a recycling approach to that from a standard approach. Numerical results are provided in \cref{sec:numerics}, and conclusions are provided in \cref{sec:conclusions}.

\section{Background on hybrid iterative methods}
\label{sec:background}
Hybrid approaches that embed regularization within iterative methods date back to seminal papers by O'Leary and Simmons in 1981 \cite{OlSi81} and Bjorck in 1988 \cite{Bjo88}, and the number of extensions and developments in the area of hybrid methods continues to grow.  We focus on hybrid methods based on the GKB process, which generate an $m$-dimensional Krylov subspace using
matrix $\bfA\t \bfA$ and vector $\bfA\t \bfb$,
$$\calK_m\left(\bfA\t \bfA,\bfA\t \bfb\right) = \mbox{span} \left\{\bfA\t \bfb, (\bfA\t \bfA) \bfA\t \bfb,
\dots , (\bfA\t \bfA)^{m-1} \bfA\t \bfb\right\}.$$  The GKB process\footnote{
We assume no termination of the iteration, and therefore the dimension of $K_{m}(\bfA\t \bfA,\bfA\t \bfb)$ is $m$.}
\cite{GoKa65} can be described as follows. Let $\beta_1 = \norm[2]{\bfb}$, $\bfu_1 = \bfb / \beta_1$, and $\alpha_1 \bfv_1 = \bfA\t \bfu_1$.
Then at the $j$-th iteration of the GKB process, we generate vectors $\bfu_{j+1}$ and $\bfv_{j+1}$ such that
\begin{equation}
\label{eq:gk1-its}
	\beta_{j+1}\bfu_{j+1} = \bfA \bfv_j - \alpha_j \bfu_j \quad \mbox{and} \quad
	\alpha_{j+1}\bfv_{j+1} = \bfA\t \bfu_{j+1} - \beta_{j+1} \bfv_j,
\end{equation}
and after $m$ iterations we have the relationships,
\begin{align}
	\label{eq:gk1}
	\bfA \bfV_m & = \bfU_{m+1} \mxB_{m}, \\
	\label{eq:gk2}
	\bfA\t \bfU_{m+1}&  = \bfV_m \mxB_m\t + \alpha_{m+1} \bfv_{m+1} \bfe_{m+1}\t = \bfV_{m+1} \bfL_{m+1}\t,
\end{align}
where $\bfV_{m} = \begin{bmatrix} \bfv_1 & \dots & \bfv_m \end{bmatrix}  \in \bbR^{N \times m}$ and $\bfU_{m+1} = \begin{bmatrix} \bfu_1 & \dots & \bfu_{m+1}\end{bmatrix} \in \bbR^{M \times (m+1)}$ contain orthonormal columns, bidiagonal matrix
\begin{equation}
\label{def:Bm}
	\mxB_{m} =\begin{bmatrix}
		\alpha_1 & & & \\
		\beta_2 & \alpha_2& & \\
		 & \ddots& \ddots & \\
		 & & \beta_m & \alpha_m \\
		 & & & \beta_{m+1}  \\
	\end{bmatrix}\in \bbR^{(m+1) \times m} ,
\end{equation} and $\bfL_{m+1} = \begin{bmatrix} \bfB_m & \alpha_{m+1} \bfe_{m+1}
\end{bmatrix}$.  Given these relations, an approximate least-squares solution can be computed as $\bfx_m = \bfV_m\bfy_m$ where $\bfy_m$ is the solution to the \emph{projected} least-squares problem,
\begin{equation}
\label{eq:PLS}
  \min_{\bfx \in \calR(\bfV_m)} \|\bfA\bfx - \bfb\|_2^2 =
  \min_{\bfy} \|\mxB_m \bfy - \beta_1\bfe_1\|_2^2.
\end{equation}
In standard LSQR implementations, the columns of $\bfV_m$ and $\bfU_{m+1}$ do not need to be stored and efficient updates can be used to minimize storage requirements. For iterative methods, the main computational cost at each iteration is a matrix-vector product with $\bfA$ and its transpose.  The storage cost for these iterative methods is very low (e.g., $M+2N$ for LSQR) due to a 3-term recurrence property.

However, when applied to ill-posed inverse problems, standard iterative methods exhibit semi-convergent behavior, whereby solutions improve in early iterations but become contaminated with inverted noise in later iterations \cite{hansen2010discrete}.  Thus, it is desirable to consider a hybrid iterative projection method that combines iterative regularization with a variational regularization method such as Tikhonov regularization.  One approach is to solve the Tikhonov problem \cref{eq:Tikhonov} by applying any iterative least-squares solver (e.g., LSQR) to the equivalent augmented system,
\begin{equation}
	\label{eqn:tikhonov3}
	\min_\bfx \norm[2]{\begin{bmatrix} \bfA \\ \lambda \bfI	\end{bmatrix} \bfx - \begin{bmatrix}\bfb \\ \bfzero
		\end{bmatrix}}^2\,.
\end{equation}
The main challenge is that the regularization parameter $\lambda$ must be selected a priori, which can be difficult especially for large-scale problems. Another hybrid iterative approach is to project the problem onto Krylov subspaces of increasing dimension and to compute the solution at the $m$-th iteration as $\bfx_m = \bfV_m\bfy_m$ where $\bfy_m$
solves the \emph{projected, regularized} problem,
\begin{equation}
	\label{eq:projreg}
  \min_{\bfx \in \calR(\bfV_m)} \|\bfA\bfx - \bfb\|_2^2 + \lambda^2 \norm[2]{\bfx}^2 =
  \min_{\bfy} \|\bfB_{m}\bfy - \beta_1\bfe_1\|_2^2 + \lambda^2 \norm[2]{\bfy}^2.
\end{equation}
One benefit of this approach is that the regularization parameter for the projected problem can be easily and automatically estimated during the iterative process \cite{KiOLe01,ChNaOLe08,renaut2010regularization}. However, a potential disadvantage is the storage of $\bfV_m$ which is needed for solution computation.
For some problems where the solution can be represented in only a few basis vectors, this additional storage is not a concern.  However, for large-scale problems where storage of these vectors becomes too demanding, the proposed hybrid projection methods with recycling and compression that we describe in the next section can be used to reduce this computational cost.

\section{Hybrid projection methods with recycling}
\label{sec:recyclinghybrid}
Using iterative hybrid projection methods to solve large-scale inverse problems can be quite effective. We are interested  in scenarios where one has an initial solution subspace (e.g., from a prior reconstruction or from a sequence of reconstructions), and the goal is to incorporate such information to not only augment but also improve or enhance the solution subspace, thereby improving the quality of the subsequent solution approximations. For example, for problems requiring many iterations, the memory and storage costs required to store the basis vectors for solution computation in canonical hybrid projection methods can exceed capabilities or result in significantly longer compute times. The proposed {\em hybrid projection methods with recycling} can be used to ameliorate the memory requirements without sacrificing the quality of the solution, where a main ingredient is the recycling GKB process. Here, we modify the classical GKB process to augment and enhance a given orthonormal basis.  Then, the recycling GKB process can be combined with a regularization technique to give an efficient hybrid projection method. Finally, by exploiting various compression approaches, compression and recycling can be repeated in an iterative fashion until a desired reconstruction is obtained.
An overview of the general approach is provided in Algorithm \ref{alg:hybridrecycle}.

\begin{algorithm}
\caption{Hybrid projection method with recycling and compression}
\label{alg:hybridrecycle}
\begin{algorithmic}[1]
	\REQUIRE{$\bfA$, $\bfb$, $\bfW_{k-1}$, $\bfx^{(1)}$
	}
\WHILE{desired solution not obtained}
\STATE $\ell = 1$
\STATE Construct $\bfW_{k}$; see \cref{sec:recyclingGKB}.
\WHILE{storage is available and stopping criteria not satisfied}
  \STATE Use recycling GKB to compute augmented subspace $\widetilde \bfV_\ell$; see \cref{sec:recyclingGKB}.
  \STATE Compute regularization parameter.
  \STATE Solve regularized, projected problem; see \cref{sec:HyBRrecycle}.
  \STATE $\ell = \ell+1$
\ENDWHILE
\STATE Use compression to get $\bfW_{k-1}$; see \cref{sec:compression_approaches}.
\ENDWHILE
\end{algorithmic}
\end{algorithm}

Notice that even though a large number of iterations can be performed, the size of the projected problem will never exceed the set storage limit.  Furthermore, theoretical results provided in \cref{subsec:analysis} show that under reasonable conditions, regularized solutions obtained from the recycling GKB approach remain close to the standard GKB solution. 
We will also address a special case where $\bfW_{k-1}$ and $\bfx^{(1)}$ come from a standard Krylov approach and TSVD is used for compression.

For all derivations and results in this section, we assume exact arithmetic and no breakdown of the algorithms. 

\subsection{Recycling Golub-Kahan bidiagonalization}
\label{sec:recyclingGKB}

In this section, we assume that an approximate solution (or initial guess) $\bfx^{(1)}$ and a matrix $\bfW_{k-1} \in \bbR^{N \times (k-1)}$ with orthonormal columns are given, and we describe the recycling GKB process that can be used to augment the solution subspace using recycling techniques. First, assuming $\bfx^{(1)}\notin \calR(\bfW_{k-1})$, we
set $\bfW_k  = \begin{bmatrix} \bfW_{k-1} & \widecheck{\bfx}^{(1)} \end{bmatrix} \in \bbR^{N \times k}$ where $\widecheck{\bfx}^{(1)} = \left(\bfx^{(1)} - \bfW_{k-1}\bfW_{k-1}\t \bfx^{(1)}\right)/ \norm[2]{\bfx^{(1)} - \bfW_{k-1}\bfW_{k-1}\t \bfx^{(1)}}$.
Now, $\bfW_k$ represents the 
\emph{recycled} subspace and $\bfW_k\t\bfW_k = \bfI_k$, and the approximate solution (or initial guess) $\bfx^{(1)}$ is always in the search space. Thus subsequent (regularized) approximations may preserve this search direction.  If $\bfx^{(1)} \in \calR(\bfW_{k-1})$, then $\bfW_{k-1}$ can be used as the recycled subspace.

Next, take the skinny QR factorization of $\bfA \bfW_k$,
\begin{equation}
\label{AWYR}
\bfA \bfW_k = \bfY_k \bfR_k \in \bbR^{M \times k} ,
\end{equation}

compute $\widecheck{\bfr}^{(1)} = \bfb - \mxA \widecheck{\bfx}^{(1)}$,
and set
\eqs
  \wt{\bfb} & = & \widecheck{\bfr}^{(1)}  -  \bfY_k \bfzeta  \quad \text{where} \quad \bfzeta =  \bfY_k\t \widecheck{\bfr}^{(1)} .
\eqe
The basic approach is to extend the solution space with an additional $\ell$ vectors generated by the recycling GKB process. Starting with
$\wt{\bfu}_1 = \wt{\bfb}/\wt{\beta}_1$ where $\wt{\beta}_1 = \| \wt{\bfb} \|_2 $ (note $\wt{\bfu}_1 \perp \bfY_k$) and $\wt{\a}_1\wt{\bfv}_1  =  \mxA\t \wt{\bfu}_1$,
at the $j$-th iteration of the recycling GKB process, we generate vectors $\wt{\bfu}_{j+1}$ and $\wt{\bfv}_{j+1}$ as
\eqs
\label{eq:recur1}
  \wt{\b}_{j+1}\wt{\bfu}_{j+1} & = & \left(\mxI - \bfY_k \bfY_k\t\right)\mxA \wt{\bfv}_j - \wt{\a}_j \wt{\bfu}_j ,\\
  \label{eq:recur}
  \wt{\a}_{j+1} \wt{\bfv}_{j+1} & = &  \mxA\t \wt{\bfu}_{j+1} - \wt{\b}_{j+1} \wt{\bfv}_{j},
\eqe
and after $\ell$ iterations, we have the following recurrence relation, cf.~(\ref{eq:gk1}) -- (\ref{eq:gk2}),
\begin{align}
\label{AVUB1}
  \left(\mxI - \bfY_k \bfY_k\t\right)\mxA \wt{\bfV}_\ell   & =  \wt{\bfU}_{\ell+1} \wt{\mxB}_{\ell}   \\
  \mxA\t \wt{\bfU}_{\ell+1} &  =  \wt{\bfV}_{\ell} \wt{\mxB}_{\ell}\t + \wt{\a}_{\ell+1} \wt{\bfv}_{\ell+1} \bfe_{\ell+1}\t, \label{AVUB2}
\end{align}
where $\wt{\bfV}_\ell = \begin{bmatrix} \wt{\bfv}_1 & \dots & \wt{\bfv}_\ell \end{bmatrix} \in \bbR^{N \times \ell},$ $\wt{\bfU}_{\ell+1} = \begin{bmatrix} \wt{\bfu}_1 & \dots & \wt{\bfu}_{\ell+1}\end{bmatrix} \in \bbR^{M \times (\ell+1)}$, and bidiagonal matrix $\wt{\bfB}_\ell \in \bbR^{(\ell+1) \times \ell}$
is constructed during the iterative process.
Notice that by construction $\wt{\bfU}_{\ell+1}\t \bfY_k = \mxO$, 
where $\mxO$ is the zero matrix, and hence
$\bfA\t \wt{\bfu}_j \perp \bfW_k$ for $j = 1, \ldots, (\ell+1)$, since $\bfW_k\t \bfA\t \wt{\bfU}_{\ell+1} = \bfR_k\t \bfY_k\t\wt{\bfU}_{\ell+1} = \mxO$. Hence,  $\wt{\bfv}_1 \perp \bfW_k$, and if we assume that $\wt{\bfv}_i \perp \bfW_k$ for $i = 1, \ldots, j$, then by induction we have from \eqref{eq:recur},
$$\wt{\alpha}_{j+1}\bfW_k\t \wt{\bfv}_{j+1} = \bfW_k\t \bfA\t 
\wt{\bfu}_{j + 1} - \wt{\b}_{j+1} \bfW_k\t \wt{\bfv}_{j}= \bf0.$$
Thus, $\bfW_k \perp \wt{\bfV}_\ell$ in exact arithmetic, without explicit orthogonalization.
We notice from (\ref{AVUB1}) that $\mxA \wt{\bfV}_\ell  =  \bfY_k \bfY_k\t\mxA \wt{\bfV}_\ell + \wt{\bfU}_{\ell+1} \wt{\mxB}_{\ell}$, so we have the recycling GKB relation,
\begin{equation}
\label{eq:new_gkb}
  \bfA \begin{bmatrix} \bfW_k & \wt{\bfV}_\ell \end{bmatrix}   = \begin{bmatrix} \bfY_k & \wt{\bfU}_{\ell+1}\end{bmatrix}
  \begin{bmatrix} \bfR_k & \bfY_k\t\bfA \wt{\bfV}_\ell \\ \bf0 & \wt{\bfB}_{\ell}\end{bmatrix} ,
\end{equation}
where $\begin{bmatrix} \bfW_k &  \wt{\bfV}_\ell \end{bmatrix}$  and $\begin{bmatrix} \bfY_k & \wt{\bfU}_{\ell+1}\end{bmatrix}$
both contain orthonormal columns.

Thus far, we have described a recycling GKB approach that can be used to augment a given solution subspace.  A distinguishing factor of this approach compared with existing enhancement methods is that the 
new, augmented, Krylov subspace depends on the recycled subspace.  Indeed, one can characterize the augmented solution subspace as a Krylov subspace of the form
$$\calR\left(\wt{\bfV}_\ell\right)=\calK_\ell\left(\bfA\t \left(\bfI - \bfY_k \bfY_k\t\right)\bfA, \bfA\t \left(\bfI - \bfY_k \bfY_k\t\right)\bfr^{(1)}\right).$$

\subsection{Hybrid projection methods using the recycling GKB}
\label{sec:HyBRrecycle}

Next, we describe how the recycling GKB process can be incorporated within a hybrid projection method for efficient regularized solution computation. 
Suppose we have performed $\ell$ iterations of the recycling GKB process, and we are interested in computing approximate Tikhonov solutions in the augmented solution subspace $\calR\left( \ars{cc}\bfW_{k} & \wt{\bfV}_{\ell}\are\right)$, i.e., we are looking for solutions of the form $\bfx_{k,\ell} = \ars{cc}\bfW_{k} & \wt{\bfV}_{\ell}\are 
\bfy$ where
$\bfy=\begin{bmatrix}
\bfc \\ \bfd\end{bmatrix}$ for some vectors $\bfc\in\bbR^k$ and $\bfd\in\bbR^\ell$.
Using the fact that $\wt{\bfb} = \widecheck{\bfr}^{(1)} - \bfY_k \bfzeta = \wt{\beta}_1\wt{\bfU}_{\ell+1}{\bfe}_1$ and 
$\widecheck{\bfx}^{(1)}=\bfW_k \bfe_k$, we have
\begin{align}
\bfb &= \widecheck{\bfr}^{(1)} + \bfA\widecheck{\bfx}^{(1)} = {\bfY}_k{\bf\zeta}  + \wt{\beta}_1\wt{\bfU}_{\ell+1}{\bfe}_1 + \bfA{\bfW}_k{\bfe}_k\\
     &= \ars{cc}{\bfY}_k & \wt{\bfU}_{\ell+1}\are \ars{c}{\bf\zeta} + {\bfR}_k {\bfe}_k \\ \wt{\beta}_1{\bfe}_1\are.
\end{align}
Then, using \eqref{eq:new_gkb}, the residual can be written as
\eqs
\bfb - \mxA \begin{bmatrix} \bfW_k & \wt{\bfV}_\ell \end{bmatrix} \begin{bmatrix} \bfc \\ \bfd\end{bmatrix} = \begin{bmatrix} \bfY_k & \wt{\bfU}_{\ell+1}\end{bmatrix}
  \left( \ars{c} \bfzeta + \mxR_k\bfe_{k} \\ \wt{\beta}_1 \bfe_1 \are -
    \ars{cc} \mxR_k & \bfY_k\t\mxA\wt{\bfV}_\ell \\ \mxO & \wt{\mxB}_{\ell} \are
    \ars{c} \bfc \\ \bfd \are
  \right).
\eqe
Thus, the next iterate of the hybrid projection method with recycling is given by
\begin{equation}
\label{eq:x_recycle}
  \bfx^{(2)}  =  \ars{cc} \bfW_k & \wt{\bfV}_\ell \are \wt{\bfy}_\lambda ,
\end{equation}
where
\begin{equation}
\label{eq:projectedreg}
\wt{\bfy}_\lambda = \argmin_{\bfy} \norm[2]{
    \begin{bmatrix} \mxR_k & \bfY_k\t\mxA\wt{\bfV}_\ell \\ \mxO & \wt{\mxB}_{\ell} \end{bmatrix} \bfy
- \begin{bmatrix} \bfzeta + \mxR_k\bfe_{k} \\ \wt{\beta}_1 \bfe_1 \end{bmatrix} }^2  + \lambda^2\norm[2]{\bfy}^2.
\end{equation}

Notice that the coefficient matrix in the projected problem,
\begin{equation}
\label{bhat}
\widehat{\bfB}_{k,\ell} = \begin{bmatrix} \bfR_k & \bfY_k\t\bfA \wt{\bfV}_\ell \\ \bf0 & \wt{\bfB}_{\ell}
\end{bmatrix} \in \mathbb{R}^{(k+\ell+1) \times (k+\ell)}
\end{equation}
is modest in size.  Thus, standard regularization parameter selection methods can be used to choose $\lambda$.
Based on the above derivation, we can interpret iterates of the hybrid projection method with recycling as optimal solutions in a $(k+\ell)$ dimensional subspace.  That is, for fixed $\lambda \geq 0$,
\begin{equation}
\bfx^{(2)} = \argmin_{\bfx \in \calR\left( \ars{cc}\bfW_{k} & \wt{\bfV}_{\ell}\are\right)} ||\bfA\bfx - \bfb ||^{2}_{2} +\lambda^2 \norm[2]{\bfx}^2. 
 \end{equation}

If the solution is not sufficiently accurate, the process can be repeated in an iterative fashion by selecting a new subspace $\calR\left(\bfW_{k-1}^{(new)}\right) \subset \calR\left(\ars{cc} \bfW_k & \wt{\bfV}_\ell \are\right)$ (for example, using one of the compression approaches in the next section), where $\bfW_{k-1}^{(new)}$ has orthonormal columns, and set $\bfW_k^{(new)} = \begin{bmatrix}\bfW_{k-1}^{(new)} & \widecheck{\bfx}^{(2)}\end{bmatrix}$ with 
$$\widecheck{\bfx}^{(2)} = \left(\left(\bfI - \bfW_{k-1}^{(new)}\left(\bfW_{k-1}^{(new)}\right)\t\right)\bfx^{(2)}\right) / \left\| \left(\bfI - \bfW_{k-1}^{(new)}\left(\bfW_{k-1}^{(new)}\right)\t\right)\bfx^{(2)} \right\|_2.$$ 
Note, that $\calR\left( \bfW^{(new)}_k \right) \subset \calR \left(\begin{bmatrix} \bfW_k & \wt{\bfV}_\ell \end{bmatrix}\right)$.
Next, we set 
$\widecheck{\bfr}^{(2)}  =  \bfb - \mxA \widecheck{\bfx}^{(2)}$, and repeat the steps above.

We remark on the additional computational cost if full reorthogonalization is desired.  In particular, the recursion,
\begin{equation}
\wt{\a}_{j+1} \wt{\bfv}_{j+1}  = \left(\bfI - \bfW_k \bfW_k\t\right) \mxA\t \wt{\bfu}_{j+1} - \wt{\b}_{j+1}\wt{\bfv}_{j}.
\end{equation}
can be used in place of (\ref{eq:recur}) to ensure that the solution basis vectors $\begin{bmatrix}\bfW_{k} & \wt{\bfV}_{\ell}\end{bmatrix}$ are orthogonal in floating point arithmetic.
In this case, the additional computational cost is $4kN$ operations for each iteration.

\subsection{Compression approaches}
\label{sec:compression_approaches}
One feature of the hybrid projection methods with recycling is the ability to combine compression and extension of the solution space in an iterative manner. That is, compression techniques can reduce the total number of solution vectors that we need to store, which can be followed by  
enhancement of the space, and this can be done without significantly degrading the accuracy of the resulting reconstruction. 
More specifically, let $\bfV^c$ represent the current set of basis vectors, and assume that we can only afford to store $m$ vectors of length $N$. When the number of columns in $\bfV^c$ reaches $m$, we can compress the vectors in $\bfV^c$ to get $\bfW_{k-1} \in \mathbb{R}^{N \times (k-1)}$ (see line 9 in \cref{alg:hybridrecycle}).  Then, we can construct $\bfW_k$ using an initial guess or current approximate solution and use the method described in \cref{sec:recyclingGKB} to augment the space with $\wt{\bfV}_{\ell}$, where $\ell = m-k$.

In this section, we focus on four compression strategies for constructing $\bfW_{k-1}$ that are well-suited for solving inverse problems with the recycling GKB process.
These include truncated singular value decomposition (TSVD), solution-oriented compression, sparsity enforcing compression, and reduced basis decomposition (RBD).
The described compression strategies follow two perspectives: (1) decompose $\widehat{\bfB}_{k,\ell} \in \bbR^{(m+1)\times m}$ defined in \cref{bhat} and use truncation (e.g., TSVD and RBD) or (2) use components in the solution of the projected problem (\ref{eq:projectedreg}) to identify the important columns of $\bfV^c$ (e.g., sparsity enforcing and solution-oriented compression). 
Throughout this subsection, we define $1 \le q < m$ as the largest number of length $N$ vectors we wish to keep after compression and $\epsilon_{tol} > 0$ is a tolerance for the compression.

First we describe the TSVD approach for compressing $\bfV^c$.
Let the SVD of $\widehat{\bfB}_{k,\ell}$ 
be given as
\begin{equation}
\label{eq:svdBm}
\widehat{\bfB}_{k,\ell} = {\bf\Psi}_{m+1}{\bf\Sigma}_m{\bf\Phi}_m\t,
\end{equation}
where ${\bf\Psi}_{m+1} \in \bbR^{(m+1) \times (m+1)} $ and ${\bf\Phi}_m \in \bbR^{m \times m}$ are orthogonal matrices, and  $\bs{\Sigma}_{m} \in \bbR^{(m+1) \times m}$ is a diagonal matrix containing singular values $\sigma_i, i = 1, \ldots, m$.
If $\sigma_{q} < \epsilon_{tol}$, we let $k-1 = i$, where $i$ is the largest index such that $\sigma_{i}  \geq \epsilon_{tol}$, otherwise $k-1=q$.
The key point of this compression strategy is that we identify the important columns of $\bfV^c$ as those corresponding to the large singular values of $\widehat{\bfB}_{k,\ell}$.
 The compressed representation of $\bfV^c$ is given by
\begin{equation}
\label{tsvd_w}
 \bfW_{k-1} = \bfV^c{\bf\Phi}_{k-1},
\end{equation}
where ${\bf\Phi}_{k-1}$ contains the first $k-1$ columns of ${\bf\Phi}_m$.

The second compression approach is motivated by the notion that the absolute value of each component of the solution to the projected problem $\wt{\bfy}_\lambda = \begin{bmatrix}\tilde{y}_{1}, &\dots, &\tilde{y}_{m}\end{bmatrix}\t$ is indicative of the important columns of $\bfV^c$.
We define $I_{m}, J_m$ as an index set at the $m$-th iteration:
\begin{align}
I_{m} &= \{i:  \quad |\tilde{y}_{i}| > \epsilon_{tol},  \;  1 \le i \le N \}\\
J_{m} &= \{i:  \quad |\tilde{y}_{i}| \; \text{are the largest} \; q \; \text{components}, \;  1 \le i \le N \}
\end{align}
For solution-oriented compression, we define $\bfW_{k-1}  = \begin{bmatrix} \bfv_{m_{1}} & \dots & \bfv_{m_{k-1}}\end{bmatrix}$, where $k - 1 = |I_{m} \cap J_m|$, $\{m_{j}\}^{k-1}_{j=1} \subseteq (I_{m} \cap J_m)$ and $m_1 \le \dots \le m_{k-1}$.

The third compression approach called sparsity-enforcing compression is intuitively similar to
the solution-oriented method. The basic idea is to use $\wt{\bfy}_\lambda$ to identify the important vectors in $\bfV^c$; however, the difference is that we employ a sparsity enforcing regularization term on the projected problem.  A standard algorithm such as SpaRSA \cite{wright2009sparse} can be used to solve for $\wt{\bfy}_\lambda$, and then corresponding vectors of $\bfV^c$ can be extracted similar to solution-oriented compression.
Lastly, we exploit tools from reduced order modeling \cite{chen2015reduced} to compress the solution vectors.  For $1 \le i \le q$, we consider the reduced basis decomposition of $\widehat{\bfB}_{k,\ell}\t$, 
\begin{equation}
\widehat{\bfB}_{k,\ell}\t = \bfS_{i}\bfT_{i},
\end{equation}
where $\bfS_{i} \in \mathbb{R}^{m \times i}$ contains orthonormal columns and transformation matrix $\bfT_{i} \in \mathbb{R}^{i \times (m+1)}$. Define $\mathcal{E}_i = \underset{1\le j \le (m+1)}{\max}\norm[2]{\widehat{\bfB}_{k,\ell}\t(:,j) - \bfS_{i}\bfT_{i}(:,j)}$. If $\mathcal{E}_q < \epsilon_{tol}$, we let $k-1 = i$, where $i$ is the largest index such that $\mathcal{E}_i \leq \epsilon_{tol}$, otherwise $k-1=q$. 
We use $\bfS_{k-1}$ to indicate important columns of $\bfV^c$, thus the compressed vectors are obtained as $\bfW_{k-1} = \bfV^c\bfS_{k-1}$.

\subsection{Theoretical analysis of hybrid projection methods with recycling}
\label{subsec:analysis}

In this section, we analyze theoretical properties of regularized solutions and the projected system using compression and recycling, in the important case that we run $m$ steps of standard GKB (see section \ref{sec:background}), compress the search space to dimension $k$, as described in section \ref{sec:compression_approaches} with ${\cal R}(\bfW_k) \subset {\cal R}(\bfV_m)$, and carry out $\ell$ steps of recycling GKB (see section \ref{sec:recyclingGKB}) which is incorporated in a hybrid projection method (see section \ref{sec:HyBRrecycle}).   
This scenario corresponds to the case where we can 
store a maximum of $m$ vectors of length $N$, but a hybrid projection method with standard GKB requires more iterations to converge.

First, we analyze the storage requirements. Let $j$ denote the number of iterations for a standard hybrid method. Without full reorthogonalization, we need to save $\bfV_{j} \in \mathbb{R}^{N \times j}$, bidiagonal matrix $\bfB_{j} \in \mathbb{R}^{(j+1)\times j}$, and $\bfu_{j+1} \in \mathbb{R}^{M \times 1}$, where the storage cost is dominated by $\bfV_{j}$ if $N$ is large. The total storage cost of standard hybrid iterative methods is
\begin{equation*}
\label{cost:GKB}
\mathcal{C}_{\text{HyBR}}(j) := 2j + (N+2)j + M.
\end{equation*}
As $j$ increases, $\mathcal{C}_{\text{HyBR}}(j)$ is dominated by $Nj$. Thus, for very large-scale problems, $\mathcal{C}_{\text{HyBR}}(j)$ increases rapidly and can easily exceed the storage limit. For the proposed recycling GKB hybrid method, we need to save $\bfW_{k} \in \mathbb{R}^{N \times k}, \bfY_{k}
\in \mathbb{R}^{M \times k}, \bfR_{k} \in \mathbb{R}^{k \times k}, \bfe_{k} \in \mathbb{R}^{k \times 1}, \wt{\bfB}_{\ell} \in \mathbb{R}^{(\ell +1) \times \ell}, \wt{\bfV}_{\ell} \in \mathbb{R}^{N \times \ell}, \bfzeta \in \mathbb{R}^{k \times 1}$, $\bfY_k^\top \bfA \wt{\bfV}_{\ell} \in \mathbb{R}^{k \times \ell}$ and $\wt{\bfu}_{\ell} \in \mathbb{R}^{M \times 1}$, where $\bfR_k$ is an upper triangular matrix and $\wt{\bfB}_{\ell}$ is a bidiagonal matrix. Since $\ell = m - k$, the storage cost of recycling GKB is
\begin{align*}
\label{cost:recyc-GKB}
\mathcal{C}_{\text{HyBR-recycle}} & := k^2/2 + (N+M+2)k + 2\ell + (N+1)\ell + k\ell \\
&=(N+2)m + Mk + k^2/2 + \ell(k+1)\\
                                  & < m^2/2 + (N+M+2)m.
\end{align*}
Therefore, the storage requirements of $\mathcal{C}_{\text{HyBR-recycle}}$ do not grow with the number of iterations.

Next, we consider several consequences of compression and augmentation
for the projected problem.
We are interested in comparing the properties of the GKB matrix $\widehat{\bfB}_{k,\ell}$ obtained with recycling with properties of the GKB matrix $\bfB_{m+\ell}$ obtained with $m+\ell$ standard GKB iterations. In addition, we show that under reasonable assumptions and for the same regularization parameter, the regularized solution from the recycling approach is close to the regularized solution from the standard approach.
For the particular case of TSVD compression, we give precise and (a posteriori) computable bounds.

We start with a lemma that shows important relations between the generated subspaces and then consider its consequence for relations between $\widehat\bfB_{k,\ell}$ and $\bfB_{m+\ell}$.
\begin{lemma}
\label{lem3}
Let $\bfV_{m+\ell}$, $\bfU_{m+\ell+1}$, and $\bfB_{m+\ell}$ be the matrices computed after $m+\ell$ iterations of standard GKB, following \eqref{eq:gk1-its}.
Let $\bfx^{(1)}$ be a (arbitrary) regularized solution computed from $\calR(\bfV_{m})$, $\calR(\bfW_{k-1}) \subset \calR(\bfV_m)$ (obtained by any compression method), and $\bfW_k$ be computed as described at the start of Section \ref{sec:recyclingGKB} with $\bfY_k$, $\bfR_k$ given in \eqref{AWYR}.
In addition, let
$\wt{\bfU}_{\ell+1} = \begin{bmatrix} \wt{\bfu}_{1} & \dots & \wt{\bfu}_{\ell+1}\end{bmatrix}$ and $\wt{\bfV}_{\ell} = \begin{bmatrix} \wt{\bfv}_{1}& \dots & \wt{\bfv}_{\ell}\end{bmatrix}$ be obtained after $\ell$ iterations of recycling GKB following \eqref{eq:recur1}--\eqref{eq:recur}.
Then
\begin{equation}
\label{eq3}
  \calR\left(\wt{\bfU}_{\ell+1}\right) \subset \calR(\bfU_{m+\ell+1}) \quad {\text and } \quad \calR\left(\wt{\bfV}_{\ell}\right) \subset \calR(\bfV_{m+\ell}).
\end{equation}
\end{lemma}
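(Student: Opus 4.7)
The plan is to prove the two containments simultaneously by induction on the recycling GKB iteration index, exploiting the two standard GKB relations $\bfA\bfV_{m+\ell} = \bfU_{m+\ell+1}\bfB_{m+\ell}$ and $\bfA^\top\bfU_{m+\ell+1} = \bfV_{m+\ell+1}\bfL_{m+\ell+1}^\top$. These immediately give the ``shift'' inclusions $\bfA\,\calR(\bfV_{j}) \subset \calR(\bfU_{j+1})$ and $\bfA^\top\calR(\bfU_{j+1}) \subset \calR(\bfV_{j+1})$ for every $j \le m+\ell$, which are the only structural facts about standard GKB that I will use.

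Before entering the induction, I will establish three preliminary inclusions. First, because $\bfx^{(1)}$ is computed from $\calR(\bfV_m)$ and $\calR(\bfW_{k-1}) \subset \calR(\bfV_m)$, the orthogonalized vector $\widecheck{\bfx}^{(1)}$ lies in $\calR(\bfV_m)$; hence $\calR(\bfW_k) \subset \calR(\bfV_m)$. Second, applying $\bfA$ gives $\calR(\bfY_k) = \calR(\bfA\bfW_k) \subset \bfA\,\calR(\bfV_m) \subset \calR(\bfU_{m+1})$. Third, the starting residual satisfies $\widecheck{\bfr}^{(1)} = \bfb - \bfA\widecheck{\bfx}^{(1)} \in \calR(\bfU_{m+1})$, since $\bfb = \beta_1\bfu_1 \in \calR(\bfU_{m+1})$ and $\bfA\widecheck{\bfx}^{(1)} \in \bfA\,\calR(\bfV_m) \subset \calR(\bfU_{m+1})$. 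Combining this with the second inclusion, $\wt{\bfu}_1 = (\widecheck{\bfr}^{(1)} - \bfY_k\bfzeta)/\wt\beta_1 \in \calR(\bfU_{m+1})$, which seeds the induction.

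The inductive claim is: for $1 \le j \le \ell$, $\wt{\bfu}_j \in \calR(\bfU_{m+j})$ and $\wt{\bfv}_j \in \calR(\bfV_{m+j})$, together with $\wt{\bfu}_{\ell+1} \in \calR(\bfU_{m+\ell+1})$. The base case $\wt{\bfu}_1$ is handled above, and for $\wt{\bfv}_1$ I use $\wt\alpha_1 \wt{\bfv}_1 = \bfA^\top\wt{\bfu}_1 \in \bfA^\top\calR(\bfU_{m+1}) \subset \calR(\bfV_{m+1})$. For the inductive step, the recurrence \eqref{eq:recur1} writes $\wt{\bfu}_{j+1}$ as a linear combination of $\bfA\wt{\bfv}_j$, $\bfY_k\bfY_k^\top\bfA\wt{\bfv}_j$, and $\wt{\bfu}_j$; all three lie in $\calR(\bfU_{m+j+1})$, because $\bfA\wt{\bfv}_j \in \bfA\,\calR(\bfV_{m+j}) \subset \calR(\bfU_{m+j+1})$, $\bfY_k\bfY_k^\top\bfA\wt{\bfv}_j \in \calR(\bfY_k) \subset \calR(\bfU_{m+1})$, and $\wt{\bfu}_j \in \calR(\bfU_{m+j})$ by hypothesis. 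Analogously, \eqref{eq:recur} expresses $\wt{\bfv}_{j+1}$ in terms of $\bfA^\top\wt{\bfu}_{j+1} \in \calR(\bfV_{m+j+1})$ and $\wt{\bfv}_j \in \calR(\bfV_{m+j})$. Setting $j = \ell$ at the end yields $\calR(\wt{\bfV}_\ell) \subset \calR(\bfV_{m+\ell})$ and $\calR(\wt{\bfU}_{\ell+1}) \subset \calR(\bfU_{m+\ell+1})$.

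The only step that needs real care is the base case, specifically checking that $\wt{\bfu}_1$ is already in $\calR(\bfU_{m+1})$ despite being built from $\bfb$, $\widecheck{\bfx}^{(1)}$, and the oblique correction $\bfY_k\bfzeta$; everything downstream reduces to mechanical use of the two shift inclusions. Once the inductive step is stated cleanly, the proof is short; no assumption about the particular compression method used to produce $\bfW_{k-1}$ from $\bfV_m$ is needed beyond $\calR(\bfW_{k-1})\subset\calR(\bfV_m)$, which is exactly what is required for the induction to launch.
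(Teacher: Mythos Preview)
Your proposal is correct and follows essentially the same induction as the paper's own proof: establish $\wt{\bfu}_1\in\calR(\bfU_{m+1})$ from $\bfb\in\calR(\bfU_{m+1})$, $\bfA\widecheck{\bfx}^{(1)}\in\calR(\bfU_{m+1})$, and $\calR(\bfY_k)\subset\calR(\bfU_{m+1})$, then alternate the shift inclusions $\bfA\,\calR(\bfV_j)\subset\calR(\bfU_{j+1})$ and $\bfA^\top\calR(\bfU_{j+1})\subset\calR(\bfV_{j+1})$ through the recycling recurrences. Your write-up is in fact slightly more explicit about the inductive bookkeeping than the paper's.
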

\begin{proof}
We prove the result by induction.
In recycling GKB, $\wt{\bfu}_{1} = \wt{\bfb}/{{\|\wt{\bfb}\|}_{2}}$ with $\wt{\bfb} = \widecheck{\bfr}^{(1)}  -  \bfY_k \bfY_k\t 
\widecheck{\bfr}^{(1)}$ and 
$\widecheck{\bfr}^{(1)} = \bfb - \mxA \widecheck{\bfx}^{(1)}$.
By construction, $\widecheck{\bfx}^{(1)}\in \calR(\bfV_m)$, and hence $\mxA \widecheck{\bfx}^{(1)} \in \calR(\mxA\bfV_{m})$. We also have $\calR(\bfY_k) \subset \calR(\mxA\bfV_{m})$, and, using \eqref{eq:gk1}--\eqref{eq:gk2}, $\calR(\mxA\bfV_{m}) \subset \calR(\bfU_{m+1})$. Since $\bfb = \beta_1 \bfu_1 \in \calR(\bfU_{m+1})$, we have
$\wt{\bfu}_{1} \in \calR(\bfU_{m+1})$.
Therefore, $\bfA\t \wt{\bfu}_{1} \in \calR(\bfA\t \bfU_{m+1}) = \calR( \bfV_{m+1})$, and as $\wt{\a}_1{\wt{\bfv}}_1  =  \mxA\t {\wt{\bfu}}_1$,
${\wt{\bfv}}_1 \in \calR( \bfV_{m+1})$.
Since $\bfA\wt{\bfv}_{1} \in \calR(\bfA\bfV_{m+1}) \subset \calR(\bfU_{m+2})$ and $\wt{\b}_2\wt{\bfu}_2 =  (\mxI - \bfY_k \bfY_k\t)\mxA \wt{\bfv}_1 - \wt{\a}_1\wt{\bfu}_1$, it follows that
$\wt{\bfu}_2  \in \calR(\bfU_{m+2})$.
Now assume that ${\wt \bfu}_{i+1} \in \calR( \bfU_{m+i+1})$ and ${\wt \bfv}_{i} \in \calR( \bfV_{m+i})$ for $i = 1, \ldots, j$.
Since $\calR(\bfY_k) \subset \calR(\bfU_{m+1})$,  we get from \eqref{eq:recur1}--\eqref{eq:recur} that
\begin{equation*}
  \wt\bfv_{j+1} \in \calR( \bfV_{m+j+1}) \quad\mbox{ and }\quad \wt{\bfu}_{j+1} \in \calR(\bfU_{m+j+1}).
\end{equation*}
\mbox{}
\end{proof}
The next result is presented without its (straightforward) proof. 
\begin{lemma}
Let $\mxA \in \bbR^{M \times N}$ and $\bfb \in \bbR^{M}$, and let $\mxP \in \bbR^{N \times N}$ and
$\mxQ \in \bbR^{M \times M}$ be orthogonal matrices. 
For any given $\l$, the Tikhonov solutions,
\eqsn
  \bfx_{\l} & = & \argmin_{\bfx \in \bbR^{N}} \| \mxA \bfx - \bfb\|_2^2 +
      \l^2 \| \bfx \|_2^2 \\
  \wt{\bfx}_{\l} & = & \argmin_{\wt{\bfx} \in \bbR^{N}}
      \| \mxQ \mxA \mxP\t \wt{\bfx} - \mxQ \bfb\|_2^2 +
      \l^2 \| \wt{\bfx} \|_2^2 
\eqen
satisfy $\bfx_{\l}  = \mxP\t \wt{\bfx}_{\l}$.
\end{lemma}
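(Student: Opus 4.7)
The plan is to exploit orthogonal invariance of the Euclidean 2-norm to show that the substitution $\wt{\bfx} = \mxP \bfx$ turns one objective into the other. Since $\mxP\t\mxP = \mxI$, this map is a bijection on $\bbR^N$, so minimizing over $\wt{\bfx}$ is equivalent to minimizing over $\bfx$ after this change of variables.

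Specifically, for any $\bfx$, using $\mxP\t(\mxP\bfx) = \bfx$, $\mxQ\t\mxQ = \mxI$, and $\mxP\t\mxP = \mxI$, I would compute
\[
  \| \mxQ \mxA \mxP\t(\mxP\bfx) - \mxQ \bfb\|_2^2 + \l^2 \| \mxP\bfx \|_2^2 \;=\; \| \mxQ(\mxA\bfx - \bfb)\|_2^2 + \l^2 \| \bfx \|_2^2 \;=\; \| \mxA \bfx - \bfb\|_2^2 + \l^2 \| \bfx \|_2^2,
\]
which shows that the second objective at $\wt{\bfx} = \mxP\bfx$ coincides with the first objective at $\bfx$. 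Since the change of variables is a bijection, the minimizers correspond: $\wt{\bfx}_{\l} = \mxP \bfx_{\l}$, i.e., $\bfx_{\l} = \mxP\t \wt{\bfx}_{\l}$.

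There is essentially no obstacle: the proof is a one-line computation using orthogonal invariance, which is precisely why the authors relegate it to ``straightforward.'' The only subtlety worth flagging is uniqueness of the argmin. For $\l > 0$ the objective is strictly convex because the Hessian $\mxA\t\mxA + \l^2\mxI$ is positive definite, so the minimizer is unique and the identification is unambiguous; for $\l = 0$ one should either assume $\mxA$ has trivial nullspace or adopt a minimum-norm convention (which is itself preserved under the orthogonal map $\mxP\t$, since $\| \mxP\t \wt{\bfx} \|_2 = \| \wt{\bfx} \|_2$). As an alternative derivation, one could equate normal equations: $\bfx_\l$ satisfies $(\mxA\t\mxA + \l^2\mxI)\bfx_\l = \mxA\t\bfb$, and the analogous equation for $\wt{\bfx}_\l$, after using $\mxQ\t\mxQ = \mxI$ and left-multiplying by $\mxP\t$, shows that $\mxP\t\wt{\bfx}_\l$ satisfies the same normal equation, giving the same conclusion by uniqueness.
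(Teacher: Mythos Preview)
Your proposal is correct and is precisely the kind of argument the paper has in mind: the authors explicitly state the lemma ``is presented without its (straightforward) proof,'' and your orthogonal-invariance/change-of-variables computation is the canonical one-line justification. Your remark on uniqueness for $\lambda>0$ (and the minimum-norm convention for $\lambda=0$) is a reasonable clarification, though in context the paper is only concerned with $\lambda>0$.
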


Next, we derive the orthogonal transformations that relate the Lanczos bases for the recycling GKB iteration with compression to those of the standard GKB iteration, and
the resulting relations between $\wh{\mxB}_{k,\ell}$
and $\mxB_{m+\ell}$. From Lemma~\ref{lem3} and the construction of
$\mxW_k$ and $\mxY_k$, we see that
$\calR(\mxW_k) + \calR(\wt{\mxV}_{\ell}) \subset \calR(\mxV_{m+\ell})$
and
$\calR(\mxY_k) + \calR(\wt{\mxU}_{\ell+1}) \subset \calR(\mxU_{m+\ell+1})$.
In addition, by construction the matrices $\begin{bmatrix} \mxW_k & \wt{\mxV}_{\ell}\end{bmatrix}$,
$\begin{bmatrix} \mxY_k & \wt{\mxU}_{\ell+1}\end{bmatrix}$, $\mxV_{m+\ell}$, and $\mxU_{m+\ell+1}$
have orthonormal columns. Hence, there exist orthogonal matrices
$\mxT = \begin{bmatrix} \mxT_1 & \mxT_2 & \mxT_c \end{bmatrix} \in \bbR^{(m+\ell+1) \times (m+\ell+1)}$
and
$\mxZ = \begin{bmatrix} \mxZ_1 & \mxZ_2 & \mxZ_c \end{bmatrix} \in \bbR^{(m+\ell) \times (m+\ell)}$, such that
$\mxY_k = \mxU_{m+\ell+1}\mxT_1$, $\wt{\mxU}_{\ell+1} = \mxU_{m+\ell+1}\mxT_2$, $\mxW_k = \mxV_{m+\ell}\mxZ_1$, and
$\wt{\mxV}_{\ell} = \mxV_{m+\ell}\mxZ_2$.
The subspace $\calR( \mxV_{m+\ell}\mxZ_c )$ is the orthogonal complement of the compressed solution space $\calR \left( \mxV_{m+\ell} \begin{bmatrix} \mxZ_1 & \mxZ_2 \end{bmatrix} \right)$
with respect to the (full) GKB solution space $\calR( \mxV_{m+\ell})$. An analogous relation holds for $\calR( \mxU_{m+\ell+1}\mxT_c )$.
Substituting these relations in (\ref{eq:new_gkb}) and using the fact that $\mxU_{m+\ell+1} 
  \left[ \mxT_1 \:\: \mxT_2 \right]$ has orthonormal columns, we obtain

\eqs
\nonumber
  \bfA \left[\bfW_k \:\: \wt{\bfV}_\ell  \right]  =
  \left[ \bfY_k \:\:  \wt{\bfU}_{\ell+1} \right]
  \begin{bmatrix} \bfR_k & \bfY_k\t\bfA \wt{\bfV}_\ell \\
    \bf0 & \wt{\bfB}_{\ell}
  \end{bmatrix}
  \eqv
  \bfA \mxV_{m+\ell} 
  \left[ \mxZ_1 \:\: \mxZ_2 \right] =
  \mxU_{m+\ell+1} 
  \left[ \mxT_1 \:\: \mxT_2 \right]
  \wh{\mxB}_{k,\ell} \quad   & \imp & \\
\label{eq:Bkl-Bml1}
  \wh{\mxB}_{k,\ell}  =
  \ars{cc} \mxT_1 & \mxT_2 \are\t \mxU_{m+\ell+1}\t \mxA
  \mxV_{m+\ell} \ars{cc} \mxZ_1 & \mxZ_2 \are =
   \ars{cc} \mxT_1 & \mxT_2 \are \t \mxB_{m+\ell} \ars{cc} \mxZ_1 & \mxZ_2 \are . &&
\eqe

Blockwise, we have $\mxT_1\t \mxB_{m+\ell} \mxZ_1 = \mxR_k$,
$\mxT_1\t \mxB_{m+\ell} \mxZ_2 = \mxY_k\t \mxA \wt{\mxV}_{\ell}$,
$\mxT_2\t \mxB_{m+\ell} \mxZ_1 = \mxO$, and
$\mxT_2\t \mxB_{m+\ell} \mxZ_2 = \wt{\mxB}_{\ell}$.
For the (3,1) block we have
\eqsn
  \mxT_c\t \mxB_{m+\ell} \mxZ_1 & = & \mxT_c\t\mxU_{m+\ell+1}\t  \mxA \mxV_{m+\ell} \mxZ_1 =
  \mxT_c\t \mxU_{m+\ell+1}\t \mxA \mxW_k =
  \mxT_c\t \mxU_{m+\ell+1}\t \mxY_k \mxR_k = \\
  &&
  \mxT_c\t \mxU_{m+\ell+1}\t \mxU_{m+\ell+1} \mxT_1 \mxR_k = \mxO .
\eqen
For the (3,2) block we have
\eqsn
  \mxT_c\t \mxB_{m+\ell} \mxZ_2 & = & \mxT_c\t\mxU_{m+\ell+1}\t  \mxA \mxV_{m+\ell} \mxZ_2 =
  \mxT_c\t \mxU_{m+\ell+1}\t \mxA \wt{\mxV}_{\ell} =
  \mxT_c\t \mxU_{m+\ell+1}\t (\mxY_k \mxY_k\t \mxA \wt{\mxV}_{\ell}
  + \wt{\mxU}_{\ell+1} \wt{\mxB}_{\ell}) = \\
  &&
  \mxT_c\t \mxU_{m+\ell+1}\t \mxU_{m+\ell+1}\ars{cc}\mxT_1 & \mxT_2 \are
  \begin{bmatrix} \mxY_k\t \mxA \wt{\mxV}_{\ell}  \\   \wt{\mxB}_{\ell}   \end{bmatrix}
  = \mxO .
\eqen
This gives the following Lemma.
\begin{lemma}\label{lem:TransBm+l}
\eqs \label{eq:TTBmlZ}
\ars{ccc} \mxT_1 & \mxT_2 & \mxT_c \are\t \mxB_{m+\ell} \ars{ccc} \mxZ_1 & \mxZ_2 & \mxZ_c \are
  & = &
  \begin{bmatrix}
  \bfR_k & \bfY_k\t\bfA \wt{\bfV}_\ell  & \mxT_1\t \mxB_{m+\ell} \mxZ_c \\
  \mxO      & \wt{\bfB}_{\ell}     &   \mxT_2\t \mxB_{m+\ell} \mxZ_c                 \\
  \mxO     &  \mxO                        &   \mxT_c\t \mxB_{m+\ell} \mxZ_c
  \end{bmatrix} .
\eqe
\end{lemma}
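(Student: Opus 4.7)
The plan is to recognize that this lemma is essentially an assembly of block identities that have already been carried out in the text immediately preceding the statement. The orthogonality of $\mxT = [\mxT_1\ \mxT_2\ \mxT_c]$ and $\mxZ = [\mxZ_1\ \mxZ_2\ \mxZ_c]$ means that the left-hand side of \eqref{eq:TTBmlZ} is a $3\times 3$ block matrix whose $(i,j)$ entry is $\mxT_i\t \mxB_{m+\ell} \mxZ_j$. So the task reduces to identifying each of the nine blocks.

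First I would read off the $(1,1)$, $(1,2)$, $(2,1)$, $(2,2)$ blocks by comparing \eqref{eq:Bkl-Bml1} with the explicit block form of $\wh{\mxB}_{k,\ell}$ in \eqref{bhat}, yielding $\mxR_k$, $\mxY_k\t \mxA \wt{\mxV}_\ell$, $\mxO$, and $\wt{\mxB}_\ell$, respectively. These four identities are already stated in the text right before the lemma and require nothing new.

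Second, I would verify the two zero blocks in the bottom row. For the $(3,1)$ block, write $\mxB_{m+\ell}\mxZ_1 = \mxU_{m+\ell+1}\t \mxA \mxV_{m+\ell}\mxZ_1 = \mxU_{m+\ell+1}\t \mxA \mxW_k$, then replace $\mxA\mxW_k$ by $\mxY_k\mxR_k$ from \eqref{AWYR} and $\mxY_k$ by $\mxU_{m+\ell+1}\mxT_1$, giving $\mxT_c\t\mxT_1\mxR_k$, which vanishes by orthogonality of $\mxT$. For the $(3,2)$ block, apply the analogous substitution $\mxB_{m+\ell}\mxZ_2 = \mxU_{m+\ell+1}\t\mxA\wt{\mxV}_\ell$, split $\mxA\wt{\mxV}_\ell$ using \eqref{AVUB1} as $\mxY_k\mxY_k\t\mxA\wt{\mxV}_\ell + \wt{\mxU}_{\ell+1}\wt{\mxB}_\ell$, and replace $\mxY_k$ and $\wt{\mxU}_{\ell+1}$ by $\mxU_{m+\ell+1}\mxT_1$ and $\mxU_{m+\ell+1}\mxT_2$; the result again vanishes from $\mxT_c\t\mxT_1 = \mxO$ and $\mxT_c\t\mxT_2 = \mxO$. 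The three blocks in the rightmost column are left as formal products $\mxT_i\t\mxB_{m+\ell}\mxZ_c$, so no further work is needed on them.

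The one non-routine step — and it is really the only place that requires thought — is in justifying the existence of the orthogonal matrices $\mxT$ and $\mxZ$ in the first place. This is exactly what Lemma~\ref{lem3} provides via the range inclusions $\calR(\wt{\bfU}_{\ell+1}) \subset \calR(\bfU_{m+\ell+1})$ and $\calR(\wt{\bfV}_{\ell}) \subset \calR(\bfV_{m+\ell})$, combined with $\calR(\mxW_k) \subset \calR(\mxV_{m+\ell})$ (from the compression hypothesis $\calR(\bfW_{k-1})\subset\calR(\bfV_m)$ together with $\widecheck{\bfx}^{(1)}\in\calR(\bfV_m)$) and $\calR(\mxY_k)\subset\calR(\mxU_{m+\ell+1})$ (from \eqref{AWYR} and $\calR(\mxA\mxW_k)\subset\calR(\mxA\mxV_{m+\ell})\subset\calR(\mxU_{m+\ell+1})$). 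Because $[\mxW_k\ \wt{\mxV}_\ell]$ and $[\mxY_k\ \wt{\mxU}_{\ell+1}]$ have orthonormal columns by construction, one can extend them to orthonormal bases of $\calR(\mxV_{m+\ell})$ and $\calR(\mxU_{m+\ell+1})$, and the change-of-basis matrices are exactly $\mxZ$ and $\mxT$. Once this point is secured, the lemma follows by purely mechanical block bookkeeping.
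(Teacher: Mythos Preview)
Your proposal is correct and follows essentially the same approach as the paper: the paper derives the four upper-left blocks from \eqref{eq:Bkl-Bml1}, then computes the $(3,1)$ and $(3,2)$ blocks exactly as you describe (via $\mxA\mxW_k = \mxY_k\mxR_k$ and the splitting $\mxA\wt{\mxV}_\ell = \mxY_k\mxY_k\t\mxA\wt{\mxV}_\ell + \wt{\mxU}_{\ell+1}\wt{\mxB}_\ell$, followed by $\mxT_c\t\mxT_1 = \mxT_c\t\mxT_2 = \mxO$), leaving the right column as formal products. Your remark about the existence of $\mxT$ and $\mxZ$ via Lemma~\ref{lem3} also mirrors the paper's preceding discussion.
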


Next we consider the difference between the regularized solution to
(\ref{eq:projectedreg}) and the regularized solution to the
full (transformed) problem with system matrix (\ref{eq:TTBmlZ}).
In particular, we analyze the backward error, and then consider
bounds on the backward error for the special case
of compression based on the TSVD.
Let $\l$ be given, typically an appropriate $\l$ for the regularized problem
(\ref{eq:projectedreg}),  and let
$\wt{\bfy}_{\l}$ be given as in \eqref{eq:projectedreg}, i.e.,
\eqs \label{eq:RecyTichSystem}
\left( \wh{\mxB}_{k,\ell}\t\wh{\mxB}_{k,\ell} + \l^2\mxI\right)\wt{\bfy}_{\l}
  & = &
  \wh{\mxB}_{k,\ell}\t \begin{bmatrix}
                         \mxR_k\bfe_k + \bfzeta \\
                         \wt{\b}_1 \bfe_1 
                       \end{bmatrix} .
\eqe
We consider the residual of the approximate solution $\ars{cc} \wt{\bfy}\t_{\l} & \bfzero\t \are \t$
for the regularized (transformed) full problem
\eqsn
\left( \left(\mxT\t\mxB_{m+\ell}\mxZ\right)\t \left(\mxT\t\mxB_{m+\ell}\mxZ\right)
  + \l^2 \mxI\right) \bfy  = 
  \left(\mxT\t\mxB_{m+\ell}\mxZ\right)\t \mxT\t \bfe_1\b_1
  & = & \left(\mxT\t\mxB_{m+\ell}\mxZ\right)\t 
  \mxT\t \mxU_{m+\ell+1}\t \bfb  \\
= \left(\mxT\t\mxB_{m+\ell}\mxZ\right)\t
  \ars{ccc} \mxY_k & \wt{\mxU}_{\ell+1} & \mxU_{m+\ell+1}\mxT_c \are\t \bfb & = &
\left(\mxT\t\mxB_{m+\ell}\mxZ\right)\t
  \begin{bmatrix}
    \mxR_k \bfe_k + \bfzeta \\
    \wt{\b}_1 \bfe_1 \\
    \bfzero
  \end{bmatrix}
\eqen
The residual for $\ars{cc}\wt{\bfy}_{\l}\t & \bfzero\t \are \t$ for the full transformed problem is given by
\eqs \label{eq:ResComprProbl}
  \bfr_{\l} & = &
  \begin{bmatrix}
      \wh{\mxB}_{k,\ell}\t & \mxO \\
      \mxZ_c\t\mxB_{m+\ell}\t \ars{cc} \mxT_1 & \mxT_2 \are & \mxZ_c\t\mxB_{m+\ell}\t \mxT_c
  \end{bmatrix}
  \begin{bmatrix}
    \begin{bmatrix}
      \mxR_k \bfe_k + \bfzeta \\
      \wt{\b}_1 \bfe_1
    \end{bmatrix} \\
    \bfzero
  \end{bmatrix}
  -
\\ \nonumber
  &&
  \begin{bmatrix}
    \wh{\mxB}_{k,\ell}\t\wh{\mxB}_{k,\ell} + \l^2 \mxI & \wh{\mxB}_{k,\ell}\t \ars{cc}\mxT_1 & \mxT_2 \are \t \mxB_{m+\ell} \mxZ_c \\
    \mxZ_c\t \mxB_{m+\ell}\t \ars{cc}\mxT_1 & \mxT_2\are \wh{\mxB}_{k,\ell} &
    \mxZ_c\t \mxB_{m+\ell}\t\mxB_{m+\ell}\mxZ_c + \l^2 \mxI
  \end{bmatrix}
  \begin{bmatrix}
    \wt{\bfy}_{\l} \\ \bfzero
  \end{bmatrix}
\\ \nonumber
  &=&
  \begin{bmatrix}
    \bfzero \\
    \mxZ_c\t \mxB_{m+\ell}\t \ars{cc}\mxT_1 & \mxT_2 \are
    \left(
    \begin{bmatrix}
      \mxR_k \bfe_k + \bfzeta  \\
      \wt{\b}_1 \bfe_1
    \end{bmatrix} -
    \wh{\mxB}_{k,\ell} \wt{\bfy}_{\l}
    \right)
  \end{bmatrix} .
\eqe
Note that 
\begin{equation}
\label{eq:rhat}
\wh{\bfr}_{\l} = 
\begin{bmatrix} 
  \mxR_k \bfe_k + \bfzeta  \\ \wt{\b}_1 \bfe_1 
\end{bmatrix} -
\wh{\mxB}_{k,\ell} \wt{\bfy}_{\l}
\end{equation}
is just the residual for the regularized
solution of (\ref{eq:projectedreg}) with the chosen $\l$, and its
norm is known. The corresponding residuals for the full system are $[\bfY_k \; \wt{\bfU}_{\ell+1}] \, \wh{\bfr}_{\l}$, obtained with compression and recycling, and 
$[ \bfY_k \; \wt{\bfU}_{\ell+1} \; (\bfU_{m+\ell+1}\bfT_c)] \, \bfr_{\l}$, obtained with $m+\ell$ steps of standard GKB, but with the regularization parameter and solution from the
compression and recycling 
approach.
This gives the following theorem.
\begin{theorem} 
Let $\mxZ$, $\mxT$, $\mxB_{m+\ell}$, $\bfr_{\l}$, and $\wh{\bfr}_{\l}$ be defined as above. Then,
\eqsn
  \bfr_{\l} & = &
    \begin{bmatrix}
    \bfzero \\
    \mxZ_c\t \mxB_{m+\ell}\t \ars{cc}\mxT_1 & \mxT_2 \are
    \wh{\bfr}_{\l}
  \end{bmatrix}  ,
\\
  \| \bfr_{\l} \|_2 & = &
  \left\| \mxZ_c\t \mxB_{m+\ell}\t \ars{cc}\mxT_1 & \mxT_2 \are
    \wh{\bfr}_{\l} \right\|_2  .
\eqen
\end{theorem}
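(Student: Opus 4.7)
The plan is to read off both equations directly from the already-derived block expression for $\bfr_{\l}$ in (\ref{eq:ResComprProbl}), using the two key identities that make the top block collapse to zero and the bottom block condense to a single matrix acting on $\wh{\bfr}_{\l}$.

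First, I would inspect the top block of the last equality in (\ref{eq:ResComprProbl}). After expanding, this block equals
$\wh{\mxB}_{k,\ell}\t \begin{bmatrix}\mxR_k\bfe_k + \bfzeta \\ \wt{\b}_1\bfe_1\end{bmatrix}
- \left(\wh{\mxB}_{k,\ell}\t\wh{\mxB}_{k,\ell} + \l^2\mxI\right)\wt{\bfy}_{\l},$
which vanishes by the normal equations (\ref{eq:RecyTichSystem}) that define $\wt{\bfy}_{\l}$ as the Tikhonov solution of the compressed/recycled projected problem. This is the single nontrivial ingredient; everything else is block bookkeeping.

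Next, I would simplify the bottom block of (\ref{eq:ResComprProbl}). Factoring $\mxZ_c\t\mxB_{m+\ell}\t\begin{bmatrix}\mxT_1 & \mxT_2\end{bmatrix}$ on the left gives
$\mxZ_c\t\mxB_{m+\ell}\t\begin{bmatrix}\mxT_1 & \mxT_2\end{bmatrix}
\left(\begin{bmatrix}\mxR_k\bfe_k + \bfzeta \\ \wt{\b}_1\bfe_1\end{bmatrix}
- \wh{\mxB}_{k,\ell}\wt{\bfy}_{\l}\right),$
and by the definition (\ref{eq:rhat}) the parenthesized quantity is exactly $\wh{\bfr}_{\l}$. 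Substituting these two simplifications into the block form for $\bfr_{\l}$ yields the first stated identity.

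Finally, the norm identity is immediate: since the top block of $\bfr_{\l}$ is zero, its Euclidean norm equals the Euclidean norm of its (only nonzero) bottom block, giving $\|\bfr_{\l}\|_2 = \|\mxZ_c\t\mxB_{m+\ell}\t\begin{bmatrix}\mxT_1 & \mxT_2\end{bmatrix}\wh{\bfr}_{\l}\|_2$. There is no real obstacle in this proof; the substantive computational work was already carried out in the derivation of (\ref{eq:ResComprProbl}) and in Lemma~\ref{lem:TransBm+l}, which established that the $(3,1)$ and $(3,2)$ blocks of $\mxT\t\mxB_{m+\ell}\mxZ$ vanish. The theorem is essentially a restatement of those earlier calculations in a clean form suitable for bounding the backward error of the compressed/recycled Tikhonov solution when viewed as an approximation to the full transformed problem.
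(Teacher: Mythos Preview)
Your proposal is correct and mirrors the paper's own argument: the theorem is simply a clean restatement of the block computation culminating in (\ref{eq:ResComprProbl}), with the top block vanishing by the normal equations (\ref{eq:RecyTichSystem}) and the bottom block recognized as $\mxZ_c\t\mxB_{m+\ell}\t[\mxT_1\;\mxT_2]\,\wh{\bfr}_{\l}$ via the definition (\ref{eq:rhat}). There is no methodological difference between your write-up and the paper's derivation.
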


Before we analyze $\|{\bfr}_{\l}\|_2$ and what it means for the difference between
the solutions from the regularized compressed problem with recycling and the regularized full problem
for the same regularization parameter,
consider the case that this residual of the regularized full
problem is
(relatively) small.
In that case, the backward error is (relatively) small, and for a well-chosen regularization
parameter the matrix is well-conditioned. Hence the difference between the
regularized solution for the full problem and the regularized solution for the compressed
problem is small. Assuming the regularization parameter is larger than the smallest
singular values, the condition number of the regularized matrix depends
on the largest singular value and the regularization parameter. In general
(pathological cases excepted), $\s_{max}(\mxB_{m+l}) \approx \s_{max}(\wh{\mxB}_{k,\ell})$,
and hence choosing $\l$ such that the compressed Tikhonov problem is well-conditioned
implies that the full Tikhonov problem would be well-conditioned for the same
$\l$.

\subsection*{Analysis for TSVD-based compression}
Next, we consider a more detailed analysis in the case that compression is done using TSVD. For simplicity, we consider 
compression after the first $m$ iterations of standard GKB, so (\ref{eq:gk1}) is satisfied, and $\ell$ subsequent steps of recycling GKB. We can extend this to an analysis for multiple compression and recycling steps, but this is left for future work. 

Let $\bfB_m = {\bf \Psi}_{m+1}  {\bf \Sigma}_m {\bf \Phi}_m^\top$ be the SVD of $\bfB_m$ with 
\begin{equation}
    \label{eq:sig_Bm}
    {\bf\Sigma}_m = \text{diag}(\s_1,\dots,\s_m),
\end{equation}
and let $\bfx^{(1)}$ be a regularized solution.
We take $\bfW_{k} = \bfV_m \begin{bmatrix} {\bf \Phi}_{k-1} & {\bs \xi}\end{bmatrix}$, where, 
following section 
\ref{sec:recyclingGKB}, 
$\bfw_k = \widecheck{\bfx}^{(1)}$ and so 
${\bs \xi} = \bfV_m^T \widecheck{\bfx}^{(1)} $.
This gives
\eqsn
  \bfY_k\bfR_k = \bfA \bfW_k & = & \bfU_{m+1} \bfB_m \ars{cc} {\bs \Phi}_{k-1} & {\bs \xi} \are
  = \bfU_{m+1} \ars{cc} {\bs \Psi}_{k-1}{\bs \Sigma}_{k-1} &
         \wt{\bs \eta} \are ,
\eqen
with $\wt{\bs \eta} = \bfB_m {\bs \xi}$. Since ${\bs \xi} \perp {\bs \Phi}_{k-1}$, we have $\wt{\bs \eta} \perp {\bs \Psi}_{k-1}$, and for the QR decomposition
$\bfY_k \bfR_k = \bfA \bfW_k$,
\eqs \label{eq:TSVD-YR}
  \bfY_k =  
  \bfU_{m+1}
  \ars{cc}{\bs \Psi}_{k-1} & {\bs \eta} \are,  &\qquad&
  \bfR_k = 
  \diag{(\sigma_1, \ldots, \sigma_{k-1},r_{kk})} 
  \mbox{ with } 
  r_{kk} = \|\wt{\bs \eta}\|_2 \leq \sigma_k .
\eqe
Next, we bound $\|\bfr_{\l}\|_2$ by bounding
$\| \bfZ_c\t\bfB_{m+\ell}\t \ars{cc} \bfT_1 & \bfT_2 \are \|_F$,
which is an obvious upper bound for 
$\| \bfZ_c\t\bfB_{m+\ell}\t \ars{cc} \bfT_1 & \bfT_2\are \|_2$.
We note that this Frobenius norm bound is
computable a posteriori (without extra cost).
First, consider $\bfZ_c\t\bfB_{m+\ell}\t \bfT_1$.
Since $\calR(\bfA\t \bfY_k) \subset \calR(\bfV_{m+1})$
and $\bfZ_c\t\bfB_{m+\ell}\t \bfT_1 = 
\bfZ_c\t\bfV_{m+\ell}\t\bfA\t\bfY_k$, 
\eqsn
  \|\bfZ_c\t\bfB_{m+\ell}\t \bfT_1\|_F^2 & = &
  \|\bfA\t \bfY_k\|_F^2 - \|\bfZ_1\t\bfB_{m+\ell}\t \bfT_1\|_F^2
  - \|\bfZ_2\t\bfB_{m+\ell}\t \bfT_1\|_F^2 .
\eqen
We have
\eqsn
  \bfA\t\bfY_k & = & 
  \bfA\t \bfU_{m+1} \ars{cc} {\bs \Psi}_{k-1} & {\bs \eta} \are
  = \bfV_{m+1}\begin{bmatrix}
    \bfB_{m}\t \\ \bfe_{m+1}\t\a_{m+1}
  \end{bmatrix}
  \ars{cc}{\bs \Psi}_{k-1} & {\bs \eta}\are \;\; \imp
\\
  \|\bfA\t\bfY_k \|_F^2 & = &
  \|{\bs \Sigma}_{k-1}\|_F^2 + \|\bfB_{m}\t {\bs \eta}\|_2^2
  + \a_{m+1}^2\|\bfe_{m+1}\t \ars{cc} {\bs \Psi}_{k-1} & {\bs \eta} \are\|_2^2 \leq
  \s_1^2 + \cdots + \s_{k-1}^2 + \s_k^2 + \a_{m+1}^2 .
\eqen
Note that $\bfB_{m}\t {\bs \eta}$ and 
$\bfe_{m+1}\t \ars{cc} {\bs \Psi}_{k-1} & {\bs \eta}\are$ can be computed at 
negligible cost during the algorithm. Also,
$\bfZ_1\t\bfB_{m+\ell}\t \bfT_1 = \bfR_k\t$, which implies 
$\| \bfZ_1\t\bfB_{m+\ell}\t \bfT_1 \|_F^2 = 
\s_1^2 + \cdots + \s_{k-1}^2 + r_{kk}^2$ (with 
$r_{kk} = \| \bfB_m {\bs \xi} \|_2$), 
and
$\| \bfZ_2\t\bfB_{m+\ell}\t \bfT_1 \|_F^2 = 
\|\bfY_k\t\bfA \wt{\bfV}_{\ell} \|_F^2$, which also can be computed at negligible cost during the algorithm.
This gives 
\eqs\label{eq:normZcBT1}
  \|\bfZ_c\t\bfB_{m+\ell}\t \bfT_1\|_F^2 & = &
  \|\bfB_{m}\t{\bs \eta}\|_2^2 - \|\bfB_{m}{\bs \xi}\|_2^2
  + \a_{m+1}^2\|\bfe_{m+1}\t\ars{cc}{\bs \Psi}_{k-1} & {\bs \eta}\are\|_2^2 - \|\bfY_k\t\bfA \wt{\bfV}_{\ell} \|_F^2
\\
  & \leq & \s_k^2 - r_{kk}^2 + \a_{m+1}^2 - \|\bfY_k\t\bfA \wt{\bfV}_{\ell} \|_F^2 .
\eqe
Note that $\bfeta = \mxB_m \bfxi/ \|\mxB_m \bfxi\|_2$, and hence
$\|\bfB_{m}\t{\bs \eta}\|_2^2 - \|\bfB_{m}{\bs \xi}\|_2^2$ tends to be small.
For $\bfZ_c\t\bfB_{m+\ell}\t \bfT_2$, we have
\eqsn
  \bfZ_c\t\bfB_{m+\ell}\t \bfT_2 & = &
  \bfZ_c\t\bfV_{m+\ell}\t\bfA\t\bfU_{m+\ell+1} \bfT_2 = 
  \bfZ_c\t\bfV_{m+\ell}\t\bfA\t\wt{\bfU}_{\ell+1}
\\
  & = & 
  \bfZ_c\t\bfV_{m+\ell}\t \left(\wt{\bfV}_{\ell} \wt{\bfB}_{\ell}\t
  + \wt{\a}_{\ell+1}\wt{\bfv}_{\ell+1}\bfe_{\ell+1}\t\right)
  = \wt{\a}_{\ell+1}\bfZ_c\t\bfV_{m+\ell}\t\wt{\bfv}_{\ell+1}\bfe_{\ell+1}\t , 
\eqen
and hence
\eqs\label{eq:normZcBT2}
  \|\bfZ_c\t\bfB_{m+\ell}\t \bfT_2\|_F & = &
  |\wt{\a}_{\ell+1}| \|\bfZ_c\t\bfV_{m+\ell}\t\wt{\bfv}_{\ell+1}\|_2
  \leq |\wt{\a}_{\ell+1}| .
\eqe
This derivation proves the following theorem.
\begin{theorem}
\label{thm:bound}
Let $\bfA$, $\bfb$, $\bfV_{m+\ell}$,
$\bfU_{m+\ell+1}$, $\bfB_{m+\ell}$, and 
$\wh{\bfB}_{k,\ell}$ be defined as above (using 
TSVD for compression).
Then $\bfr_{\l}$ in (\ref{eq:ResComprProbl}) satisfies 
\begin{equation}
    \label{eq:bound}
\| \bfr_{\l} \|_2  \leq 
  \|\wh{\bfr}_{\l}\|_2
  \left(\s_k^2 - r_{kk}^2 + \a_{m+1}^2 - \|\bfY_k\t\bfA \wt{\bfV}_{\ell} \|_F^2 + \wt{\a}_{\ell+1}^2\right)^{1/2},
\end{equation}
where $\wh{\bfr}_{\l}$ is given in \eqref{eq:rhat}.
\end{theorem}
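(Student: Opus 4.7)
\textbf{Proof plan for Theorem \ref{thm:bound}.}

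The plan is to chain the three ingredients already assembled in the subsection: (i) the exact expression for $\bfr_\l$ from the preceding theorem, (ii) the block-wise Frobenius bound \eqref{eq:normZcBT1} on $\|\bfZ_c\t\bfB_{m+\ell}\t\bfT_1\|_F$, and (iii) the bound \eqref{eq:normZcBT2} on $\|\bfZ_c\t\bfB_{m+\ell}\t\bfT_2\|_F$. The conclusion is then a single application of submultiplicativity together with the standard inequality $\|\cdot\|_2\le\|\cdot\|_F$.

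First, I would recall from the preceding theorem that
\[
 \bfr_{\l} = \begin{bmatrix} \bfzero \\ \mxZ_c\t \mxB_{m+\ell}\t \ars{cc}\mxT_1 & \mxT_2 \are \wh{\bfr}_{\l} \end{bmatrix},
\]
so that $\|\bfr_{\l}\|_2 = \|\mxZ_c\t \mxB_{m+\ell}\t [\mxT_1\ \mxT_2]\,\wh{\bfr}_{\l}\|_2$. Applying submultiplicativity in the $2$-norm and bounding the $2$-norm of the matrix by its Frobenius norm gives
\[
  \|\bfr_{\l}\|_2 \le \|\mxZ_c\t \mxB_{m+\ell}\t [\mxT_1\ \mxT_2]\|_F\,\|\wh{\bfr}_{\l}\|_2.
\]

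Next I would split the Frobenius norm column-blockwise as
\[
  \|\mxZ_c\t \mxB_{m+\ell}\t [\mxT_1\ \mxT_2]\|_F^2
  = \|\mxZ_c\t \mxB_{m+\ell}\t \mxT_1\|_F^2 + \|\mxZ_c\t \mxB_{m+\ell}\t \mxT_2\|_F^2,
\]
which is immediate since the two column blocks are orthogonal components of the same matrix. Substituting the already-derived bound \eqref{eq:normZcBT1}, namely $\|\mxZ_c\t \mxB_{m+\ell}\t \mxT_1\|_F^2 \le \s_k^2 - r_{kk}^2 + \a_{m+1}^2 - \|\bfY_k\t\bfA \wt{\bfV}_{\ell}\|_F^2$, together with the bound \eqref{eq:normZcBT2}, $\|\mxZ_c\t \mxB_{m+\ell}\t \mxT_2\|_F \le |\wt{\a}_{\ell+1}|$, yields
\[
  \|\mxZ_c\t \mxB_{m+\ell}\t [\mxT_1\ \mxT_2]\|_F^2 \le \s_k^2 - r_{kk}^2 + \a_{m+1}^2 - \|\bfY_k\t\bfA \wt{\bfV}_{\ell}\|_F^2 + \wt{\a}_{\ell+1}^2.
\]
Taking square roots and multiplying by $\|\wh{\bfr}_{\l}\|_2$ gives \eqref{eq:bound}.

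There is no real obstacle: the theorem is essentially an assembly step that packages the identity from the previous theorem with the two a posteriori computable block bounds. The only things to verify carefully are that (a) the right-hand side of \eqref{eq:normZcBT1} is indeed non-negative (it is, because it equals a squared Frobenius norm), so the square root in \eqref{eq:bound} is well-defined, and (b) the use of $\|\cdot\|_F$ to bound $\|\cdot\|_2$ is harmless here since we only claim an inequality; a tighter spectral-norm bound is possible but would not be computable at the same negligible cost, which is the point emphasized immediately before the theorem.
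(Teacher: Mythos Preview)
Your proposal is correct and matches the paper's own argument essentially step for step: the paper also notes that $\|\bfZ_c\t\bfB_{m+\ell}\t[\bfT_1\ \bfT_2]\|_F$ is an obvious upper bound for the corresponding $2$-norm, derives the block bounds \eqref{eq:normZcBT1} and \eqref{eq:normZcBT2}, and then states that ``this derivation proves the following theorem.'' There is nothing to add.
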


Finally, we provide a few more useful properties of the matrix $\bfT\t \bfB_{m+\ell} \bfZ$. 
First, we would like a bound on its maximum singular value, so we can estimate the condition number of the regularized
matrix in (\ref{eq:ResComprProbl}). Note that its
smallest eigenvalue is larger than $\l^2$, where 
$\l$ is the regularization parameter. Using $\s_1(\cdot)$ to denote the largest singular value of a matrix, 
\begin{align}
  \s_1\left(\bfT\t \bfB_{m+\ell} \bfZ\right)
  & \leq 
  \s_1\left(\bfT\t \bfB_{m+\ell} \ars{cc}\bfZ_1 & \bfZ_2\are
    \right) + \s_1(\bfT\t \bfB_{m+\ell} \bfZ_c) \\
  & \leq 
  \s_1\left(\bfT\t \bfB_{m+\ell} \ars{cc} \bfZ_1 & \bfZ_2 \are\right) + \|\bfT\t \bfB_{m+\ell} \bfZ_c\|_F .
\end{align}
Estimating $\bfT_c\t\bfB_{m+\ell}\bfZ_c$
is difficult, and we use a conjecture that we test numerically below.
Consider the blocks of $\bfB_{m+\ell}$,
\eqs \label{eq:Bm+l-blocks}
\bfB_{m+\ell} = \begin{bmatrix}\bfB_{m} & \begin{bmatrix}\bfzero_{m\times \ell} \\ \alpha_{m+1}\bfe_1\t \end{bmatrix} \\\\ \bfzero_{\ell \times m} &  \overline{\bfB}_{\ell}\end{bmatrix} 
& \;\;\mbox{ with }\;\; &
\overline{\mxB}_{\ell} =\begin{bmatrix}
		\beta_{m+2} & \alpha_{m+2}& & \\
		 & \ddots& \ddots & \\
		 & & \beta_{m+\ell} & \alpha_{m+\ell} \\
		 & & & \beta_{m+\ell+1}  \\
	\end{bmatrix}\in \bbR^{\ell \times \ell} .
\eqe
We also define 
\eqs \label{eq:Blbar2}
\overline{\overline{\bfB}}_{\ell} = \begin{bmatrix}\alpha_{m+1}\bfe_1\t \\\\ \overline{\bfB}_{\ell}\end{bmatrix}.
\eqe
Since
$\|\bfB_m\|_F^2 + 
\|\overline{\overline{\bfB}}_{\ell}\|_F^2 = 
\|\bfB_{m+\ell}\|_F^2 = 
\|\bfT\t\bfB_{m+\ell}\bfZ\|_F^2 = 
\|\bfT\t\bfB_{m+\ell}\ars{cc}\bfZ_1 & \bfZ_2 \are\|_F^2 +
\|\bfT\t\bfB_{m+\ell}\bfZ_c\|_F^2$,
we get
\eqs\label{eq:TTBmlZc_est}
\|\bfT\t\bfB_{m+\ell}\bfZ_c\|_F^2 & = &
\|\bfB_m\|_F^2 - \|\bfR_k\|_F^2 - \|\bfY_k\t\bfA\wt{\bfV}_{\ell}\|_F^2 +
\left(\|\overline{\overline{\bfB}}_{\ell}\|_F^2 - 
\|\wt{\bfB}_{\ell}\|_F^2\right) .
\eqe
Since the sequences of vectors $\wt{\bfv}_1, \wt{\bfv}_2, \ldots$ and $\bfv_{m+1}, \bfv_{m+2}, \ldots$ both
extend the Krylov space beyond $\calR(\bfV_m)$, and 
both sequences are orthogonal to the approximate
dominant 
singular vectors $\bfW_k$, we conjecture,
\begin{conj} \label{conj1}
  $\|\overline{\overline{\bfB}}_{\ell}\|_F^2 \approx 
\|\wt{\bfB}_{\ell}\|_F^2 .$
\end{conj}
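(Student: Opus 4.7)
The plan is to rewrite both Frobenius norms as sums of squared two-norms of the form $\|\bfA\bfv\|_2^2$ evaluated on orthonormal column sets that extend $\calR(\bfV_m)$, and then argue that, under the premise that $\bfW_k$ accurately captures the leading singular subspace of $\bfA$, these two quantities nearly agree. First, using $\bfA\bfv_j = \alpha_j\bfu_j + \beta_{j+1}\bfu_{j+1}$ from standard GKB together with the orthonormality of $\bfU_{m+\ell+1}$, I would obtain by a direct summation
\begin{equation*}
\|\overline{\overline{\bfB}}_{\ell}\|_F^2 \;=\; \sum_{j=m+1}^{m+\ell}\|\bfA\bfv_j\|_2^2 \;=\; \bigl\|\bfA\,\bfV_{m+\ell}(:,m{+}1{:}m{+}\ell)\bigr\|_F^2,
\end{equation*}
while from \eqref{eq:recur1} and the orthonormality of $\wt{\bfU}_{\ell+1}$,
\begin{equation*}
\|\wt{\bfB}_{\ell}\|_F^2 \;=\; \bigl\|(\bfI-\bfY_k\bfY_k\t)\bfA\wt{\bfV}_{\ell}\bigr\|_F^2 \;=\; \|\bfA\wt{\bfV}_{\ell}\|_F^2 - \|\bfY_k\t\bfA\wt{\bfV}_{\ell}\|_F^2.
\end{equation*}
The conjecture is therefore the comparison between these two Frobenius norms of $\bfA$ restricted to Krylov extensions that are (approximately) orthogonal to $\bfW_k$.

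Second, I would invoke the Krylov characterizations given in the text: $\calR(\wt{\bfV}_{\ell})$ is a Krylov space generated by $\bfA\t(\bfI-\bfY_k\bfY_k\t)\bfA$, while $\calR([\bfv_{m+1},\ldots,\bfv_{m+\ell}])$ continues the unrestricted GKB driven by $\bfA\t\bfA$. If $\bfW_k$ is a good approximation to the dominant right singular subspace of $\bfA$ (which is the TSVD premise, linked to $\bfW_k$ via \eqref{eq:TSVD-YR}), then $\bfY_k$ approximates the corresponding left subspace, and both operators effectively act as $\bfA$ deflated on that complement. The key step is then to show that, up to terms controlled by the Ritz residual $\alpha_{m+1}$ and the gap $r_{kk}-\sigma_k$ (both of which appear already in Theorem~\ref{thm:bound}), the two extended Krylov processes generate spaces exploring the same ``residual'' action of $\bfA$, so that
\begin{equation*}
\|\bfA\wt{\bfV}_{\ell}\|_F^2 - \|\bfY_k\t\bfA\wt{\bfV}_{\ell}\|_F^2 \;\approx\; \bigl\|\bfA\,\bfV_{m+\ell}(:,m{+}1{:}m{+}\ell)\bigr\|_F^2,
\end{equation*}
which is the conjectured identity. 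One route is to bound each side via trace identities for the restrictions of $\bfA\t\bfA$ to the two subspaces and then estimate the difference by a principal-angle argument between $\calR(\bfW_k)$ and $\calR(\bfV_m\bfPhi_{k-1})$.

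The main obstacle is that this statement is inherently approximate: it cannot be expected to hold as an identity because $\bfW_k$ only approximates dominant singular directions, and the two Krylov continuations visit the residual spectrum of $\bfA$ in different orders. A rigorous bound would couple Kaniel--Saad--Paige-style convergence estimates for GKB (to control how well the first $m$ standard iterations account for the top $k$ singular components) with a perturbation analysis of the deflation residual $\|\bfY_k\t\bfA\wt{\bfV}_{\ell}\|_F^2$, and would likely involve the singular gap $\sigma_k(\bfA)-\sigma_{k+1}(\bfA)$, the quantity $\alpha_{m+1}$, and $\|\bfA\t\bfU_{m+1}-\bfV_m\bfB_m\t\|$. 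Precisely quantifying when all these pieces are simultaneously small is exactly why the authors state the result as a conjecture, to be validated empirically rather than established as a sharp theorem.
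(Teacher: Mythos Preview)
Your closing paragraph has it exactly right: this is stated in the paper as a \emph{conjecture}, not a theorem, and the paper offers no proof at all. The authors write explicitly that estimating $\bfT_c\t\bfB_{m+\ell}\bfZ_c$ ``is difficult, and we use a conjecture that we test numerically below,'' and the only supporting evidence given is the empirical Tables~1--3 reporting the difference $d = \bigl|\,\|\overline{\overline{\bfB}}_{\ell}\|_F^2 - \|\wt{\bfB}_{\ell}\|_F^2\,\bigr|$ across images, blurs, and noise levels. So there is no ``paper's own proof'' to compare against; your proposal already goes further than the paper in attempting an analytical rationale.

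On the substance of what you wrote: your two rewriting identities are correct. From $\bfA\bfv_j = \alpha_j\bfu_j + \beta_{j+1}\bfu_{j+1}$ one indeed gets $\|\overline{\overline{\bfB}}_{\ell}\|_F^2 = \sum_{j=m+1}^{m+\ell}(\alpha_j^2+\beta_{j+1}^2) = \sum_{j=m+1}^{m+\ell}\|\bfA\bfv_j\|_2^2$, and from \eqref{AVUB1} with $\wt{\bfU}_{\ell+1}$ orthonormal one gets $\|\wt{\bfB}_{\ell}\|_F^2 = \|(\bfI-\bfY_k\bfY_k\t)\bfA\wt{\bfV}_{\ell}\|_F^2 = \|\bfA\wt{\bfV}_{\ell}\|_F^2 - \|\bfY_k\t\bfA\wt{\bfV}_{\ell}\|_F^2$. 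These reformulations make the conjecture's content transparent and match the paper's one-sentence heuristic that both $\wt{\bfv}_1,\wt{\bfv}_2,\ldots$ and $\bfv_{m+1},\bfv_{m+2},\ldots$ ``extend the Krylov space beyond $\calR(\bfV_m)$'' while being (approximately) orthogonal to the dominant directions in $\bfW_k$.

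Where your outline remains a sketch rather than a proof is precisely the step you flag: passing from ``both continuations explore the deflated spectrum'' to a quantitative bound on the Frobenius-norm difference. A Kaniel--Paige--Saad argument together with a principal-angle perturbation of the deflation would be the natural route, but carrying it through would require controlling how the two Krylov processes sample the remaining singular values, and no such bound is claimed or needed in the paper. In short, your proposal is a reasonable heuristic elaboration of a statement the authors deliberately leave as a numerically supported conjecture; there is no gap relative to the paper because the paper provides nothing to compare against beyond the tables.
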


\noindent
We provide numerical confirmation of this conjecture 
at the end of this section.
Using Conjecture \ref{conj1}, (\ref{eq:TTBmlZc_est}), (\ref{eq:sig_Bm})
and (\ref{eq:TSVD-YR}), we obtain the following (approximate) bound, 
\eqs 
\nonumber
\|\bfT\t\bfB_{m+\ell}\bfZ_c\|_F^2 & = &
  \s_k^2 - r_{kk}^2 + \s_{k+1}^2 + \ldots \s_m^2  - \|\bfY_k\t\bfA\wt{\bfV}_{\ell}\|_F^2
  + \left(\|\overline{\overline{\bfB}}_{\ell}\|_F^2 - 
\|\wt{\bfB}_{\ell}\|_F^2\right) 
\\
\label{eq:appr-bnd-TBZc}
& \lessapprox &
\s_k^2 - r_{kk}^2 + \s_{k+1}^2 + \ldots + \s_m^2 .
\eqe
This gives the following approximate bound,
\eqs
  \s_1(\bfB_{m+\ell}) = \s_1(\bfT\t \bfB_{m+\ell} \bfZ)
  & \lessapprox & 
  \left( \s_1^2\left(\begin{bmatrix}
    \bfR_k & \bfY_k\t\bfA\wt{\bfV}_{\ell} \\
    \bfzero & \wt{\bfB}_{\ell} \\
    \bfzero & \bfzero
  \end{bmatrix}\right) + 
  \s_k^2 - r_{kk}^2 + \s_{k+1}^2 + \ldots + \s_m^2
  \right)^{1/2} ,
\eqe
which in general is only modestly larger than $\s_1$.
Note that the first term on the right hand side, 
which equals $\s_1^2(\wh{\bfB}_{k,\ell})$, 
is easily computable. 

Using (\ref{eq:appr-bnd-TBZc}), 
(\ref{eq:normZcBT1}) - (\ref{eq:normZcBT2}), 
and $\s_k \geq \|\bfB^T \bs \eta\|_2 \geq r_{kk}$, we also obtain
an approximate bound for the bottom right block
of $\bfT\t \bfB_{m+\ell} \bfZ$, 
\begin{align}
\nonumber
  \|\bfT_c\t\bfB_{m+\ell}\bfZ_c\|_F^2 &  = 
  \s_k^2 -r_{kk}^2 + \s_{k+1}^2 + \ldots + \s_m^2 -
  \|\bfY_k^T\bfA\wt{\bfV}_{\ell}\|_F^2 
  + \left(\|\overline{\overline{\bfB}}_{\ell}\|_F^2 -
  \|\wt{\bfB}_{\ell}\|_F^2\right) 
\\
\nonumber
  & \qquad
  - \|\bfB_m\t{\bs \eta}\|_2^2 + r_{kk}^2 -
  \a_{m+1}^2\|\bfe_m\t\ars{cc}{\bs \Psi}_{k-1}& {\bs \eta}\are\|_2^2
  + \|\bfY_k^T\bfA\wt{\bfV}_{\ell}\|_F^2
  - \wt{\a}_{\ell+1}^2\|\bfZ_c\t\bfV_{m+\ell}\t\wt{\bfv}_{\ell+1}\|_2^2
\\
\label{eq:TcBZv-bnd}
  & \lessapprox
  \s_k^2 -r_{kk}^2 + \s_{k+1}^2 + \ldots + \s_m^2 .
\end{align}

Finally, we note that, in general, as the dominant
singular vectors are captured relatively quickly and progressively
better in the Krylov spaces, the coefficients
$|\a_j|$ and $|\b_{j+1}|$ have a decreasing trend.
Hence, $|\a_{m+1}|$ and $|\wt{\a}_{\ell+1}|$ tend to be small compared with $\s_k$.
In Figure \ref{pic:upperbound}, we numerically verify this trend for $\alpha_j$ for
the example in \cref{sec:id}.

\begin{figure}[h!]
\centering
\includegraphics[width = 0.85\textwidth]{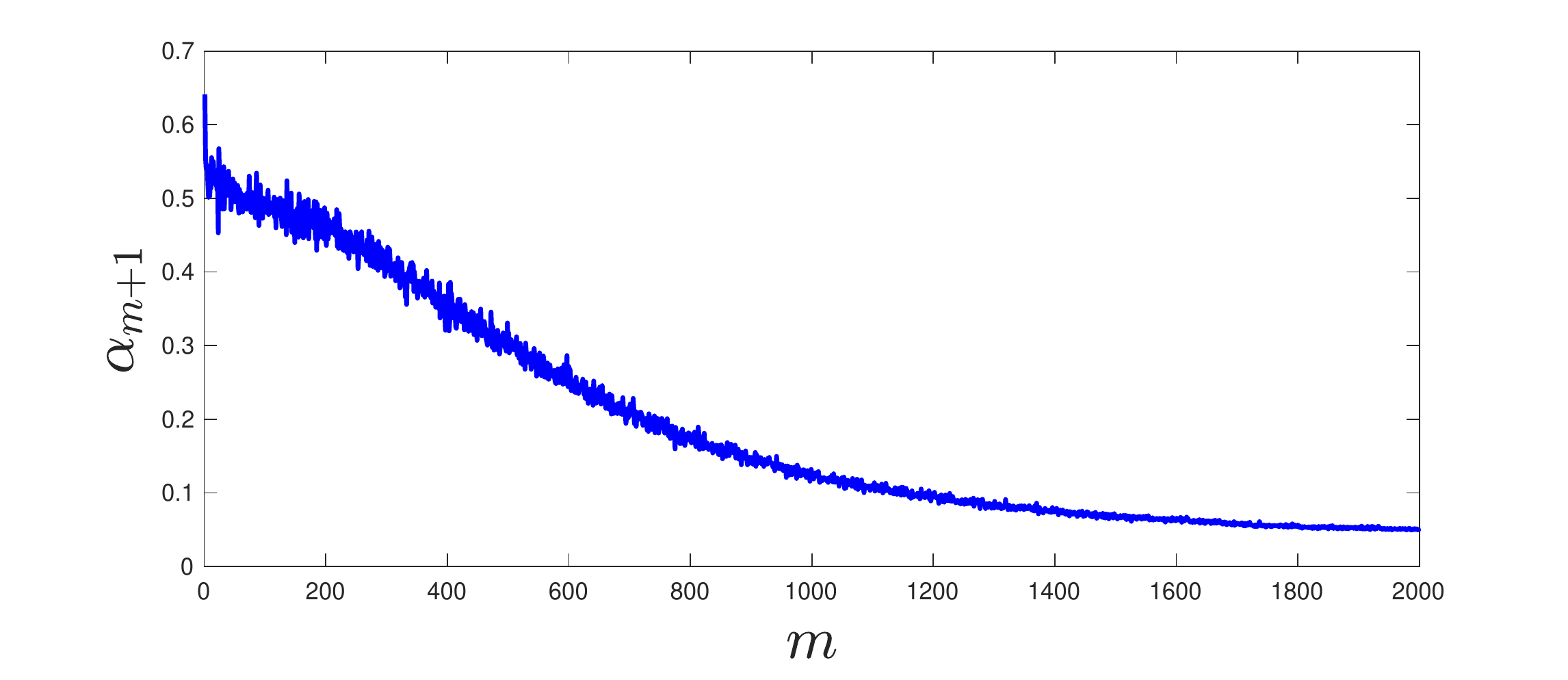}
\caption{\em Values of $\alpha_{m+1}$ for
the example in \cref{sec:id}.}
\label{pic:upperbound}
\end{figure}

To get a better understanding of the final term in the bound, we provide the difference
\begin{equation*}
d = \left|\norm[F]{\overline{\overline{\bfB}}_{\ell}}^2 - \norm[F]{\wt{\bfB}_{\ell}}^2\right|
\end{equation*}
for some commonly used numerical examples from RestoreTools \cite{nagy2004iterative}. In \cref{table1} we consider different images; \cref{table2} we consider different types of blur; and in \cref{table3} we consider different noise levels. We observe that $d$ is much smaller than $\s_k$ in regular scenarios, so $\s_k$ dominates the upper bound of $ \|\bfT_c\t\bfB_{m+\ell}\bfZ_c\|_F^2$ in  (\ref{eq:TcBZv-bnd}). 
\begin{table}[h]
   \centering 
   \begin{tabular}{lcccc} 
   \hline
   \hline
   \textbf{Image} & $d$ & $\norm[F]{\overline{\overline{\bfB}}_{\ell}}^2$ & $\norm[F]{\wt{\bfB}_{\ell}}^2$ & $\sigma_{30}$\\
   \hline
   Grain & 0.0593 & 9.3606 & 9.3013 & 0.6280\\
   \hline
   Plane &0.1345 & 9.4065 & 9.2720 & 0.6332\\
   \hline
   Peppers & 0.1548 & 9.4280 & 9.2732 & 0.6286\\
   \hline
   Cameraman& 0.1261 &9.4353 & 9.3092 & 0.6283\\
   \hline
   \end{tabular}
   \caption{Algorithm setting: $m = 50, k = q = 30, \ell=20$. Noise level: $0.2\%$. Size: $256 \times 256$. Blurring information: Gaussian with a missing piece.}
   \label{table1}
\end{table}

\begin{table}[h]
   \centering 
   \begin{tabular}{lcccc} 
   \hline
   \hline
   \textbf{Blurring information} & $d$ & $\norm[F]{\overline{\overline{\bfB}}_{\ell}}^2$ & $\norm[F]{\wt{\bfB}_{\ell}}^2$ & $\sigma_{30}$ \\
   \hline
    Gaussian with a missing piece & 0.0593 &9.3606 & 9.3013 & 0.6280\\
   \hline
   Nonsymmetric Gaussian blur with parameters $[2,3,0]$ &0.1094 &9.4610 & 9.3515&0.6153 \\
   \hline
    Nonsymmetric Out of Focus blur with radius $9$ & 0.1134 &9.4804 &9.3670 &0.6160\\
   \hline
   Box car blur ($11\times 11$ blur) & 0.1440 &9.4977 &9.3538 & 0.6155 \\
   \hline
   \end{tabular}
   \caption{Algorithm setting: $m = 50, k = q= 30, \ell=20$. Noise level: $0.2\%$. Size: $256 \times 256$. Image: Grain.}
   \label{table2}
\end{table}

\begin{table}[h]
   \centering 
   \begin{tabular}{lcccc} 
   \hline
   \hline
   \textbf{Noise level} & $d$ & $\norm[F]{\overline{\overline{\bfB}}_{\ell}}^2$ & $\norm[F]{\wt{\bfB}_{\ell}}^2$ & $\sigma_{30}$\\
   \hline
   0.005 & 0.0586 & 9.3599 & 9.3013 & 0.6280\\
   \hline
   0.01 & 0.0580 & 9.3593 & 9.3013 & 0.6279\\
   \hline
   0.05 & 0.0592 & 9.3605 & 9.3013 & 0.6275\\
   \hline
   0.1& 0.0605 & 9.3618 & 9.3013 & 0.6271\\
   \hline
   \end{tabular}
   \caption{Algorithm setting: $m = 50, k = q = 30, \ell=20$. Image: grain. Size: $256 \times 256$. Blurring information: Gaussian with a missing piece.}
   \label{table3}
\end{table}

\section{Numerical results}
\label{sec:numerics}
In this section, we compare the performance of the proposed hybrid projection methods with recycling to that of the conventional hybrid methods using examples from image processing. We consider various scenarios where the recycling hybrid projection methods can alleviate storage requirements and improve reconstructions when solving inverse problems.  In \cref{sec:id} we consider a linear image deblurring problem where standard hybrid methods may be limited by the storage of many vectors in the solution space.  We investigate the performance of various compression methods and parameter selection methods.  Then in \cref{sec:tomo}, we consider two tomographic reconstruction examples, one for a streaming data problem and another for a problem with modified projection angles using real data.

\subsection{Image deblurring example}
\label{sec:id}
This example is an image deblurring problem from RestoreTools \cite{nagy2004iterative}, where the goal is to reconstruct a true image of a grain, which has $256 \times 256$ pixels, from an observed blurred image that contains Gaussian white noise at a noise level of $0.2 \% $, i.e.,  $\frac{\norm[2]{\bfepsilon}}{\norm[2]{\bfA\bfx_{\rm true}}} = 0.002$.  The true image, blurred and noisy image, and point spread function (PSF) for the grain example are shown in Figure \ref{pic:grain}. An image deblurring problem with a smaller noise level usually requires more iterations to converge, which for standard hybrid methods means that we need to store more solution vectors. For this example, assume that we can store at most $50$ solution basis vectors, each of size $65536 \times 1$. We will show that the proposed hybrid projection method with recycling and compression, henceforth denoted \texttt{HyBR-recycle}, can handle this scenario. We will investigate various compression techniques from \cref{sec:compression_approaches}, where the stopping criteria are (1) the maximum number of basis vectors saved after compression is $q=30$ and (2) the compression tolerance is $\varepsilon_{tol} = 10^{-6}$. 
\begin{figure}[h!]
\begin{tabular}{ccc}
\includegraphics[width = 0.33\textwidth]{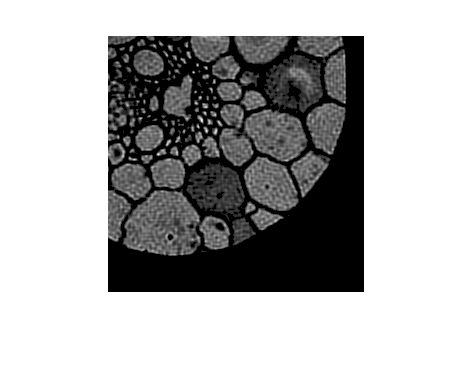}&
\includegraphics[width = 0.33\textwidth]{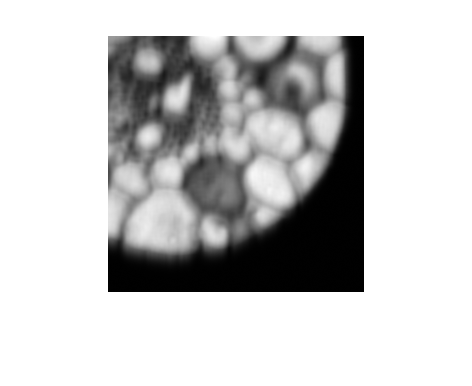}&
\includegraphics[width = 0.33\textwidth]{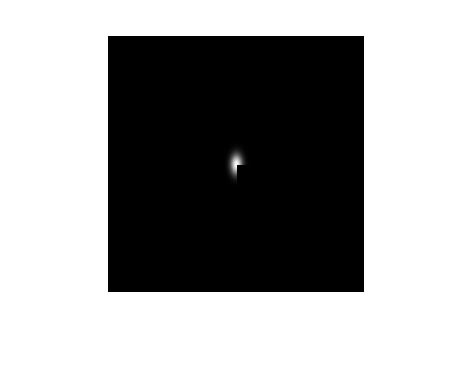}\\
(a) True image & (b) Noisy blurred image & (c) PSF
\end{tabular}
\caption{Image deblurring example.}
\label{pic:grain}
\end{figure}

In Figure \ref{pic:opt-wgcv}, we provide the relative reconstruction error norms for HyBR-recycle with various compression strategies, where for comparison we include the relative error norms for LSQR (with no additional regularization) and for a standard hybrid method denoted by \texttt{HyBR}.
In the left plot, we use the optimal regularization parameter at each iteration, which is not available in practice, and in the right plot, we use the weighted GCV (WGCV) method for regularization parameter selection. 
We observe that LSQR exhibits semiconvergence, and HyBR-opt is not able to achieve high accuracy due to the fact that the storage limit has been set to $50$ solution vectors. The reconstructions for hybrid projection methods with recycling and compression demonstrate the competitiveness of this approach in limited storage situations. 
Moreover,  we notice that for both regularization parameter choice methods, solution-oriented compression and sparsity-enforcing compression provide slightly smaller relative error norms than TSVD and RBD.

\begin{figure}[h]
\begin{tabular}{cc}
\includegraphics[width = 0.5\textwidth]{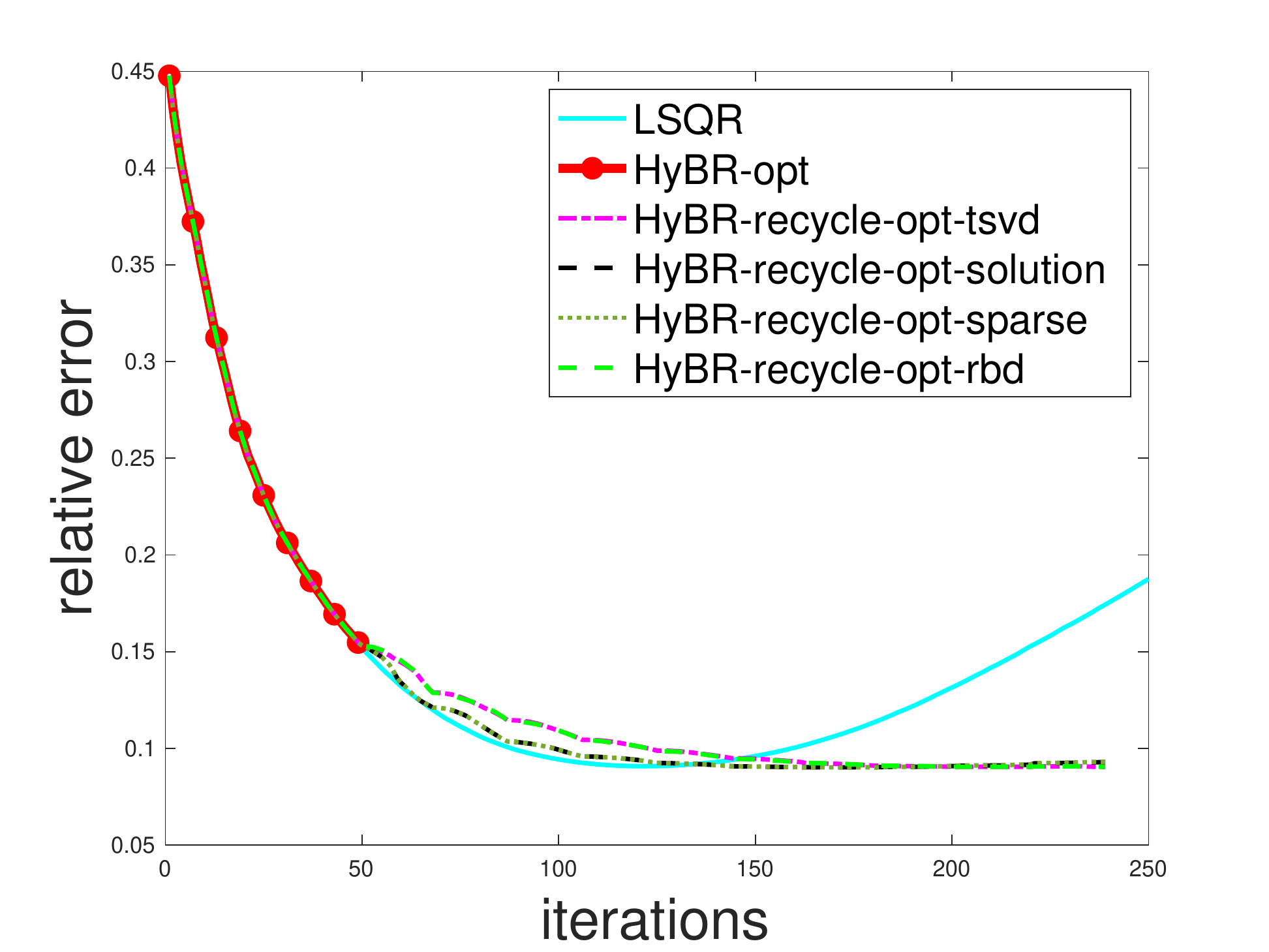}&
\includegraphics[width = 0.5\textwidth]{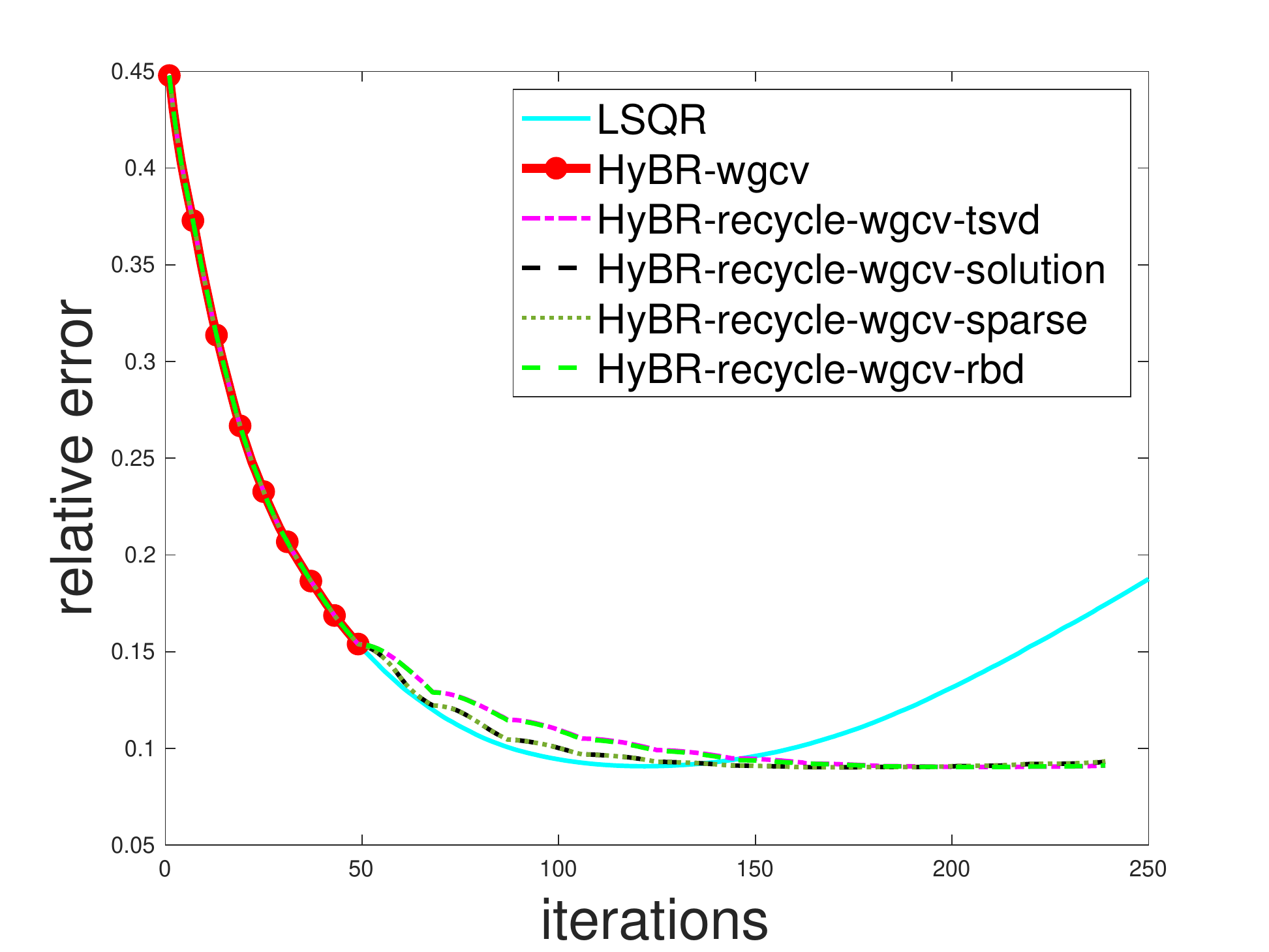}\\
(a) Optimal & (b) WGCV
\end{tabular}
\caption{Relative reconstruction error norms for hybrid projection methods with recycling using different compression strategies. The left plot corresponds to selecting the optimal regularization parameter at each iteration, and the right plot corresponds to selecting the regularization parameter using WGCV. For all methods, we assume that storage of solution basis vectors is limited to $50$.}
\label{pic:opt-wgcv}
\end{figure}

Next, in Figure \ref{pic:svd-wgcv} we compare various methods for selecting regularization parameters in hybrid projection methods with recycling. We consider two compression techniques (TSVD and solution-oriented), and we provide relative reconstruction error norms for parameter choice methods: the WGCV, the unbiased predictive risk estimator (UPRE), and the discrepancy principle (DP).
 For the experiments, we use the true noise level for both UPRE and DP, but estimates of the noise level can be obtained in practice.
For comparison, we provide results for the optimal regularization parameter.  We observe that all of the considered regularization parameter selection methods result in relative reconstruction error norms that are close to those for the optimal regularization parameter.

\begin{figure}[h!]
\begin{tabular}{cc}
\includegraphics[width = 0.5\textwidth]{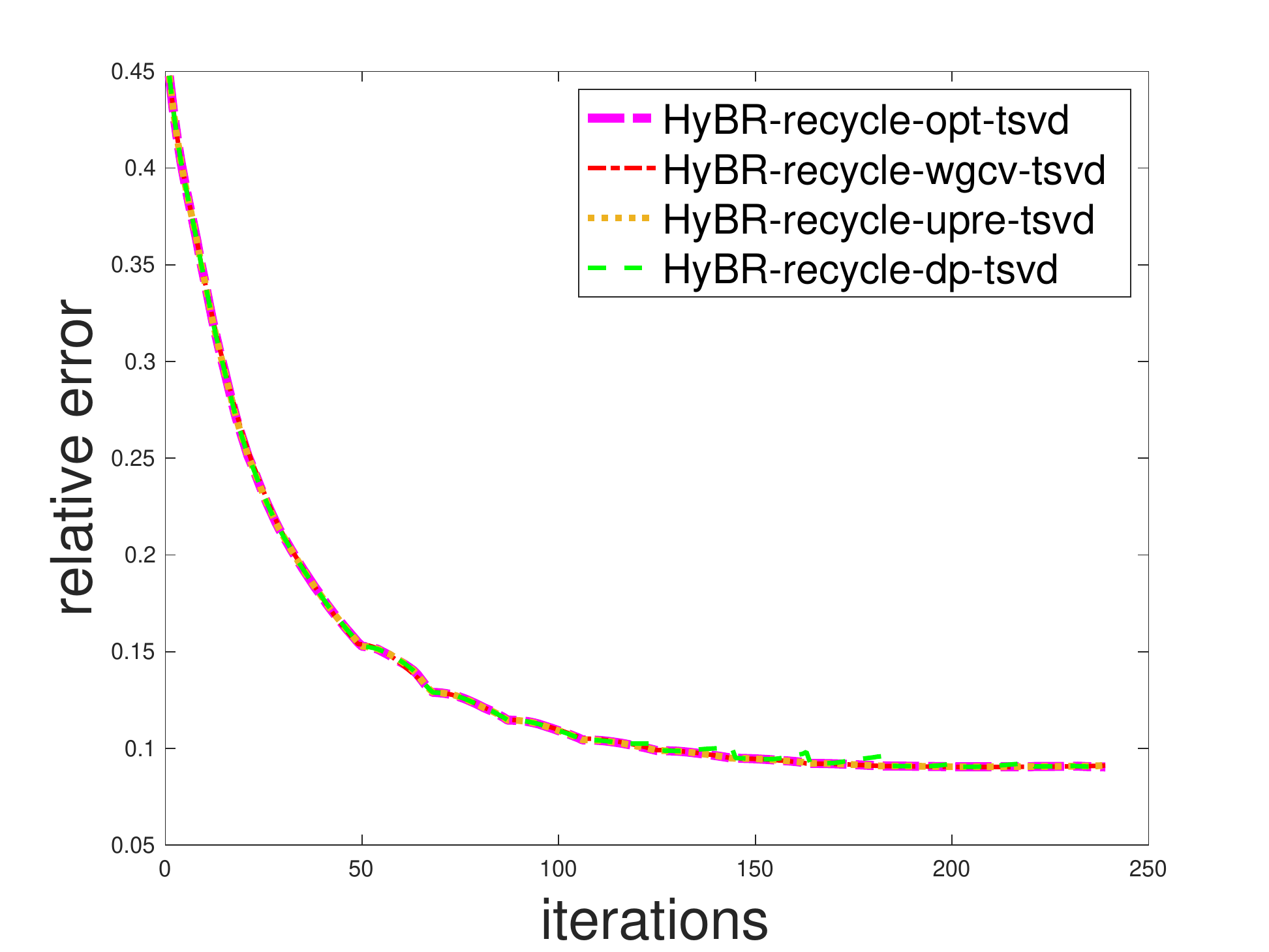}&
\includegraphics[width = 0.5\textwidth]{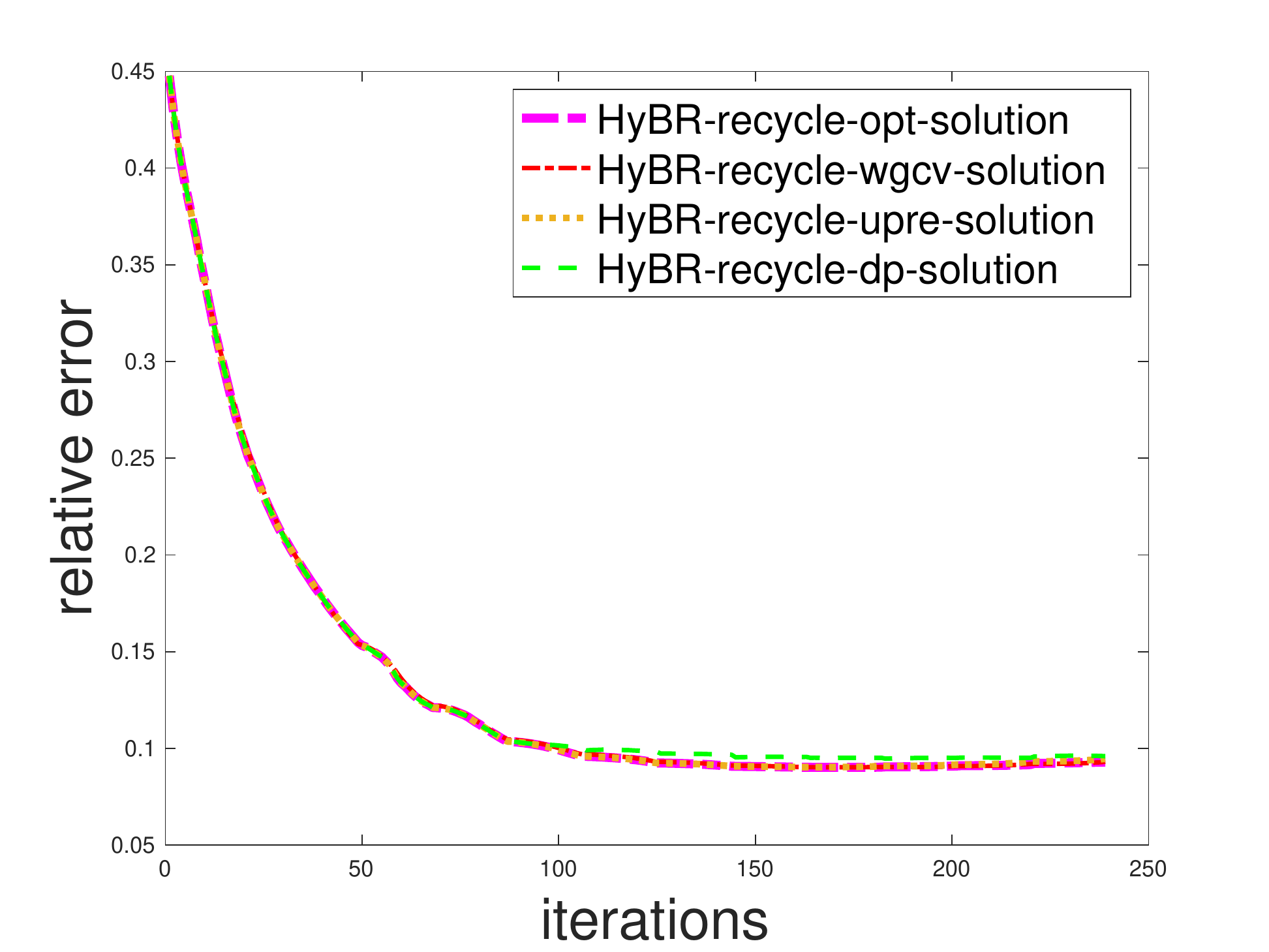}\\
(a) TSVD & (b) Solution-oriented method
\end{tabular}
\caption{Relative reconstruction errors for hybrid projection methods with recycling and compression for the grain deblurring example with different regularization parameter choice methods.}
\label{pic:svd-wgcv}
\end{figure}

The absolute error images (in inverted colormap) corresponding to reconstructions of the grain image are provided in Figure \ref{pic:deblur_image}. We compare reconstructions with standard hybrid methods after 50 iterations with reconstructions with hybrid projection methods with recycling after 239 iterations, for both WGCV and DP. For HyBR-recycle-WGCV, we provide results for TSVD and solution-oriented compression. For HyBR-recycle-DP, we provide results for RBD and sparsity enforcing compression. Due to the forced storage limit, HyBR-WGCV and HyBR-DP reconstruction absolute errors are large (corresponding to darker regions in Figures \ref{pic:deblur_image} (a) and (d) respectively). 
These observations are consistent with the relative error norms provided in Figure \ref{pic:opt-wgcv}.

\begin{figure}[h!]
\begin{tabular}{ccc}
\includegraphics[width = 0.33\textwidth]{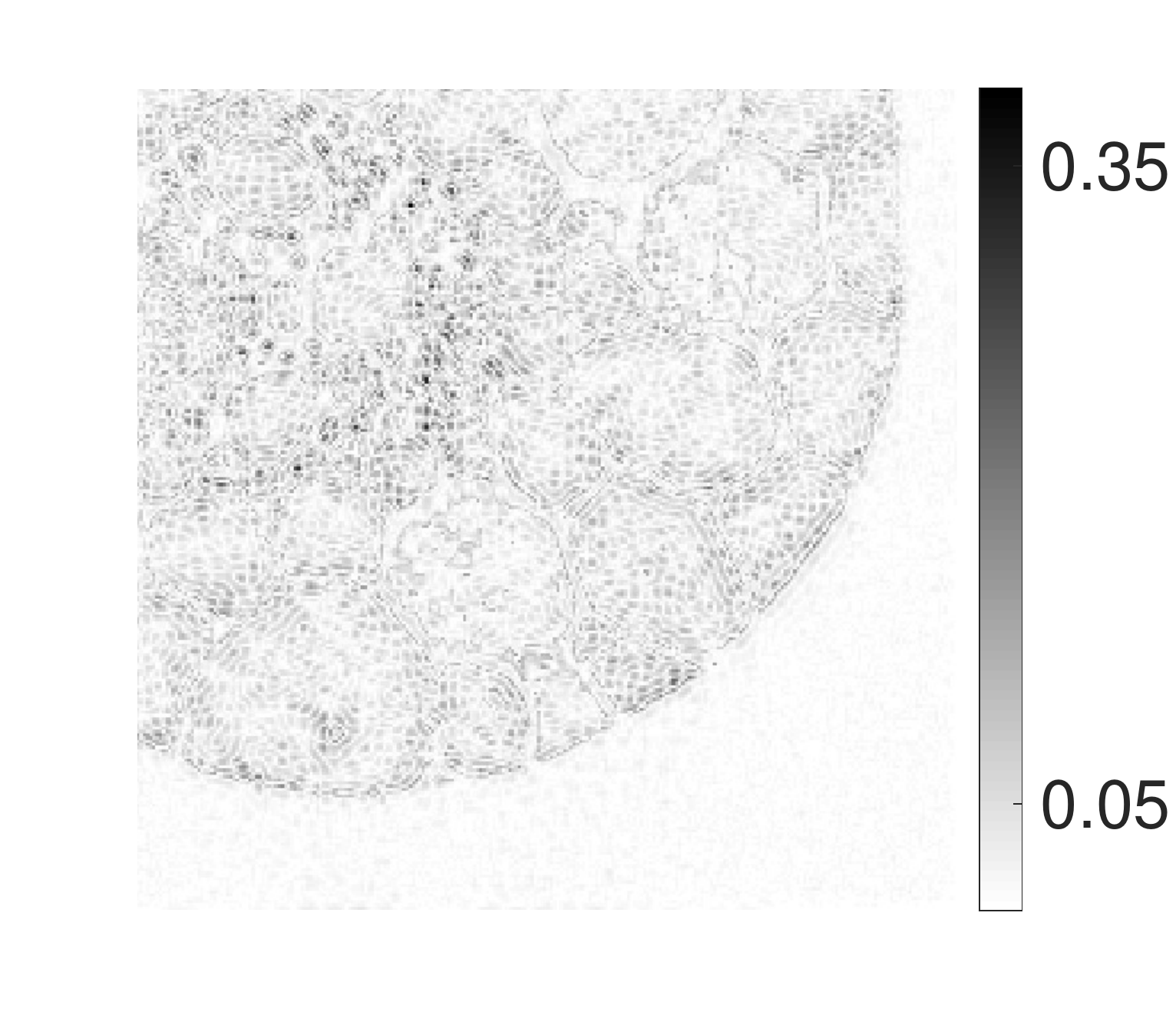}&
\includegraphics[width = 0.33\textwidth]{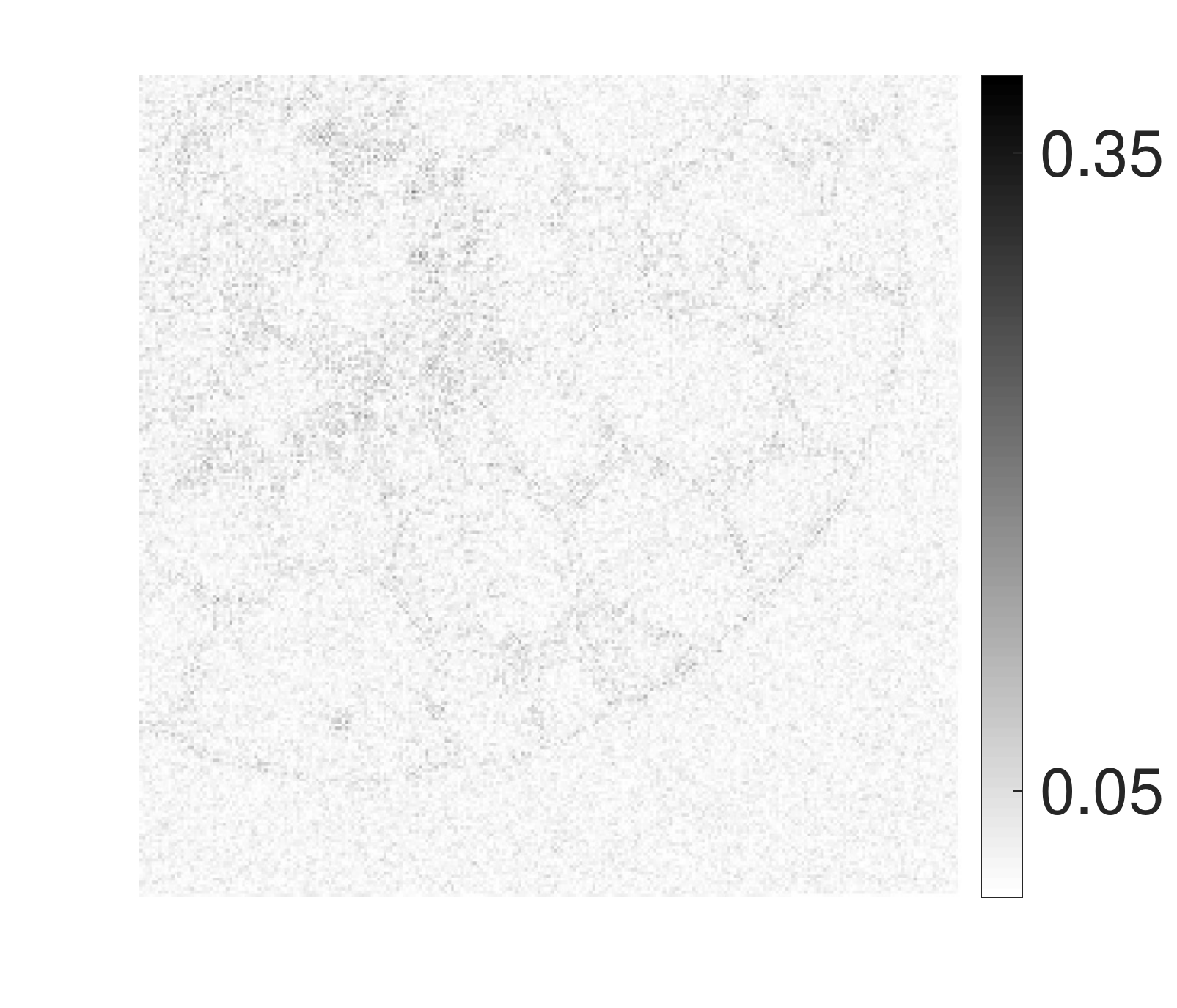}&
\includegraphics[width = 0.33\textwidth]{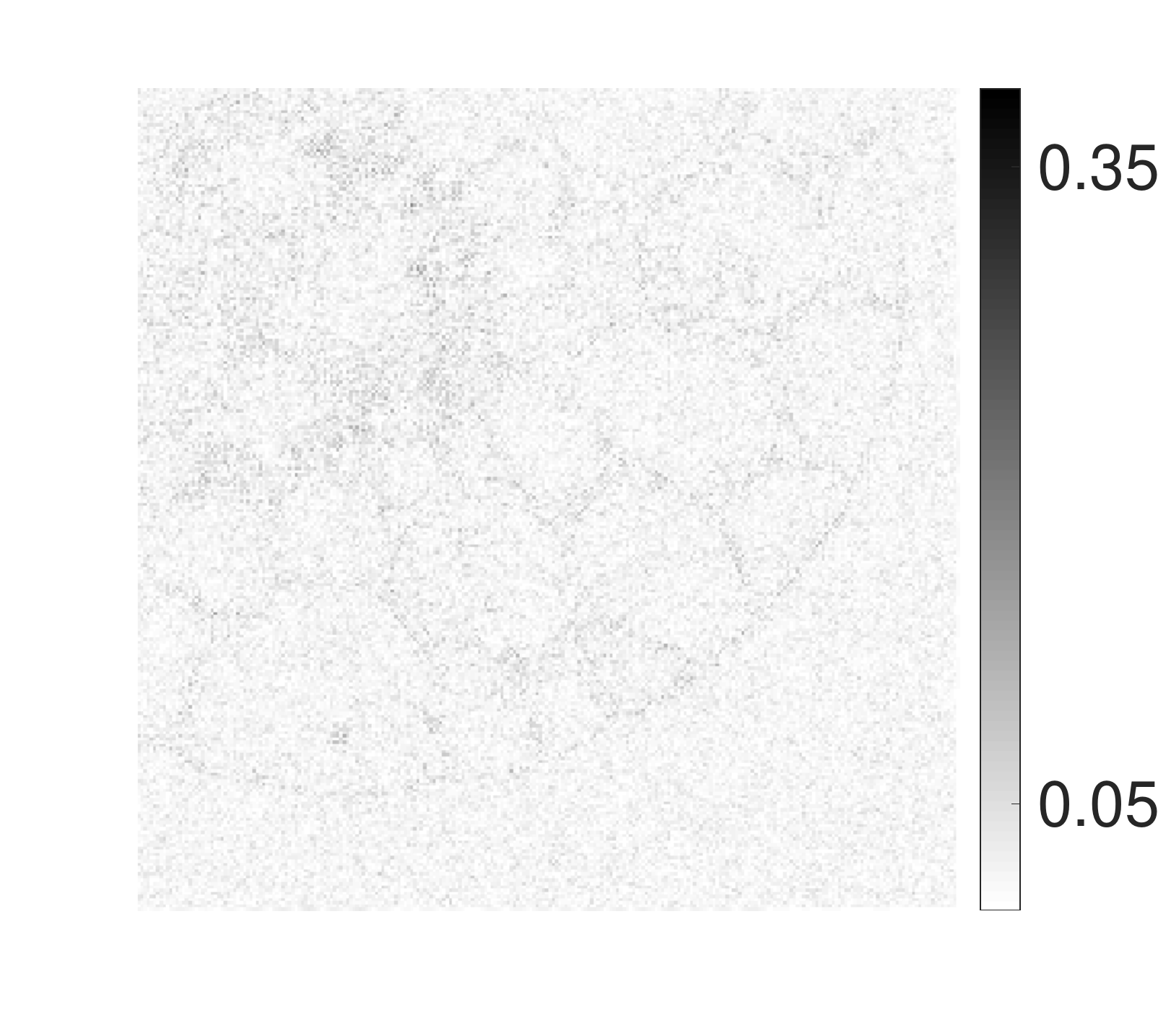}\\
(a)HyBR-WGCV  & (b) HyBR-recycle-WGCV-TSVD  & (c) HyBR-recycle-WGCV-solution \\
\includegraphics[width = 0.33\textwidth]{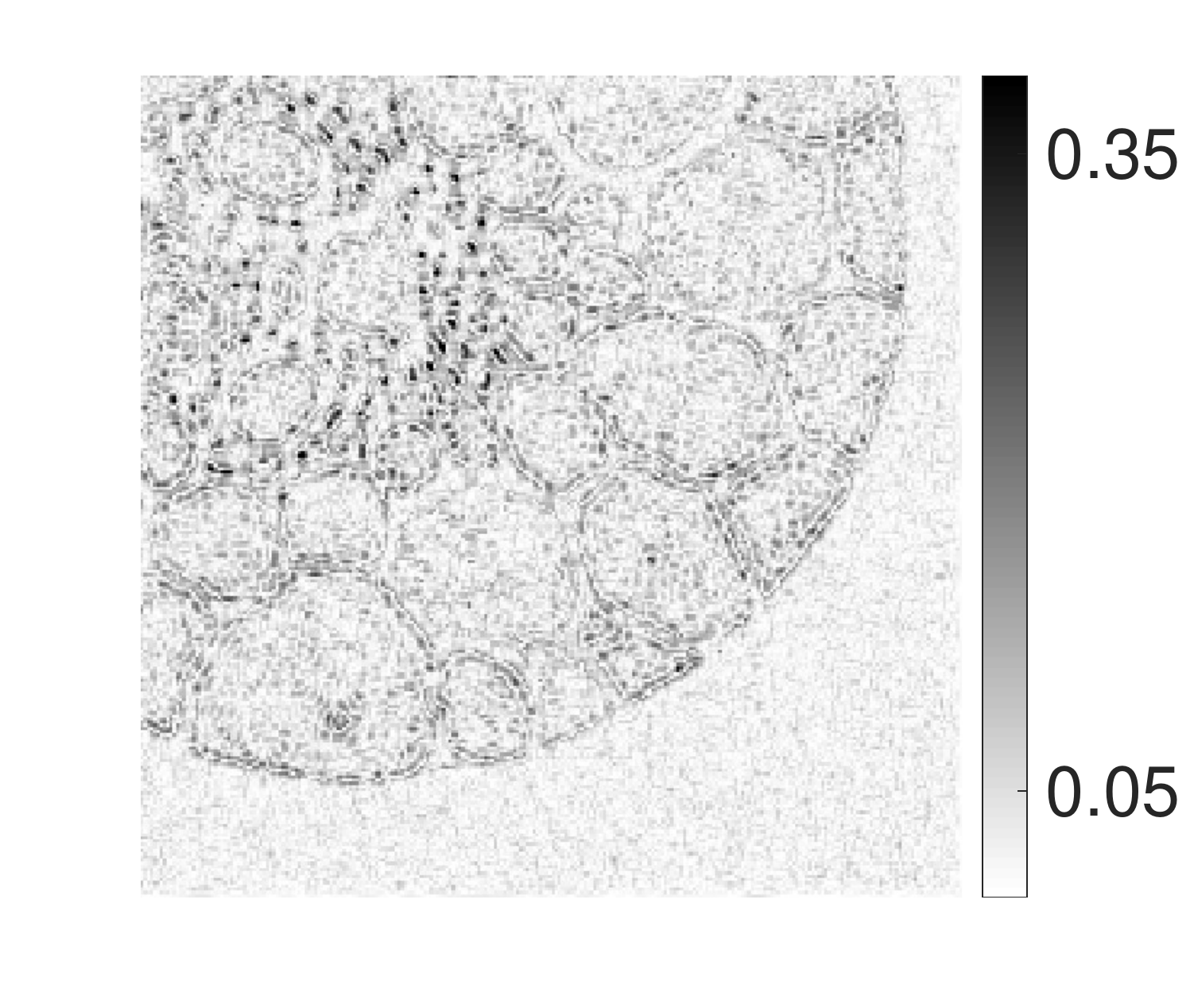}&
\includegraphics[width = 0.33\textwidth]{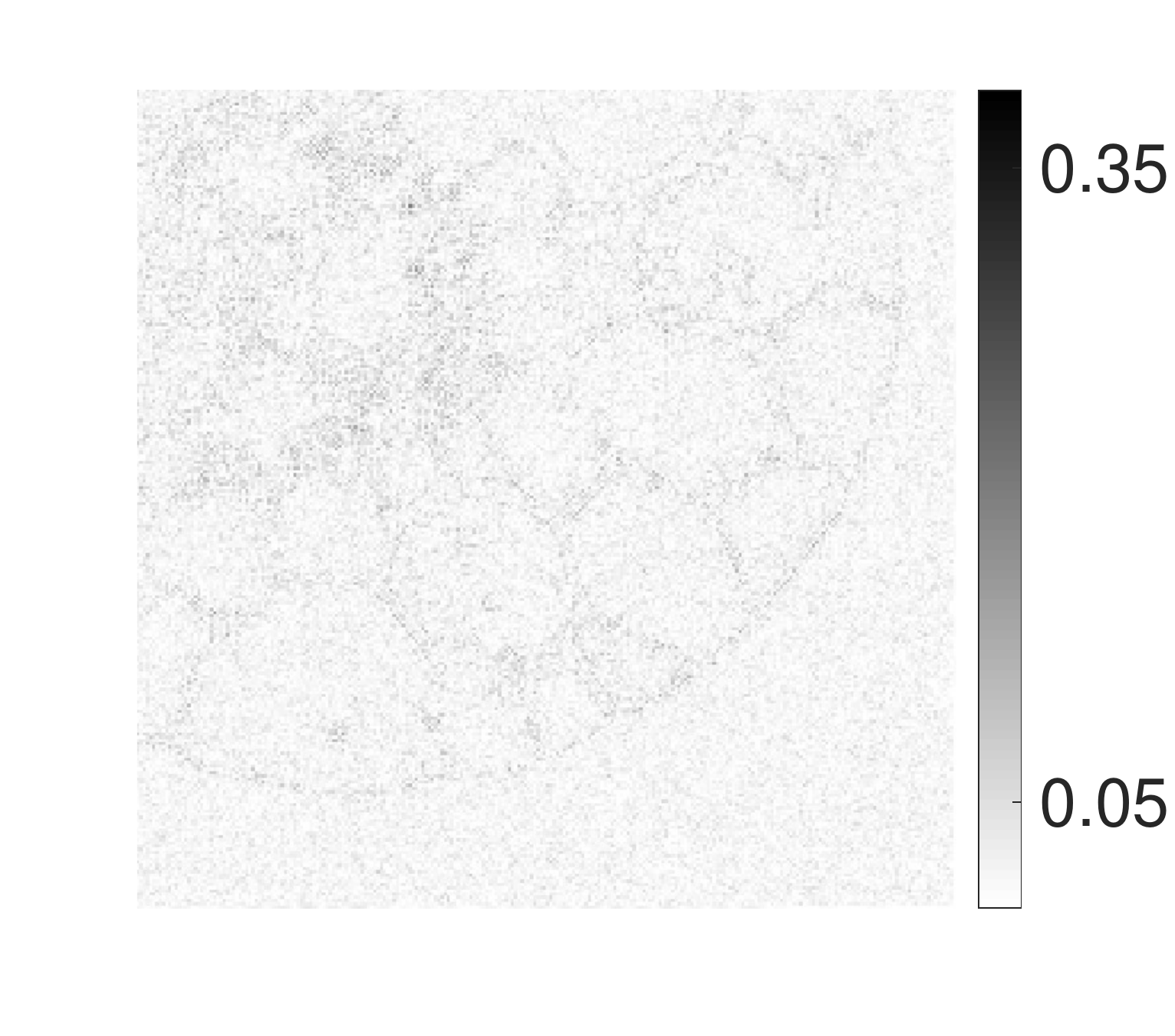}&
\includegraphics[width = 0.33\textwidth]{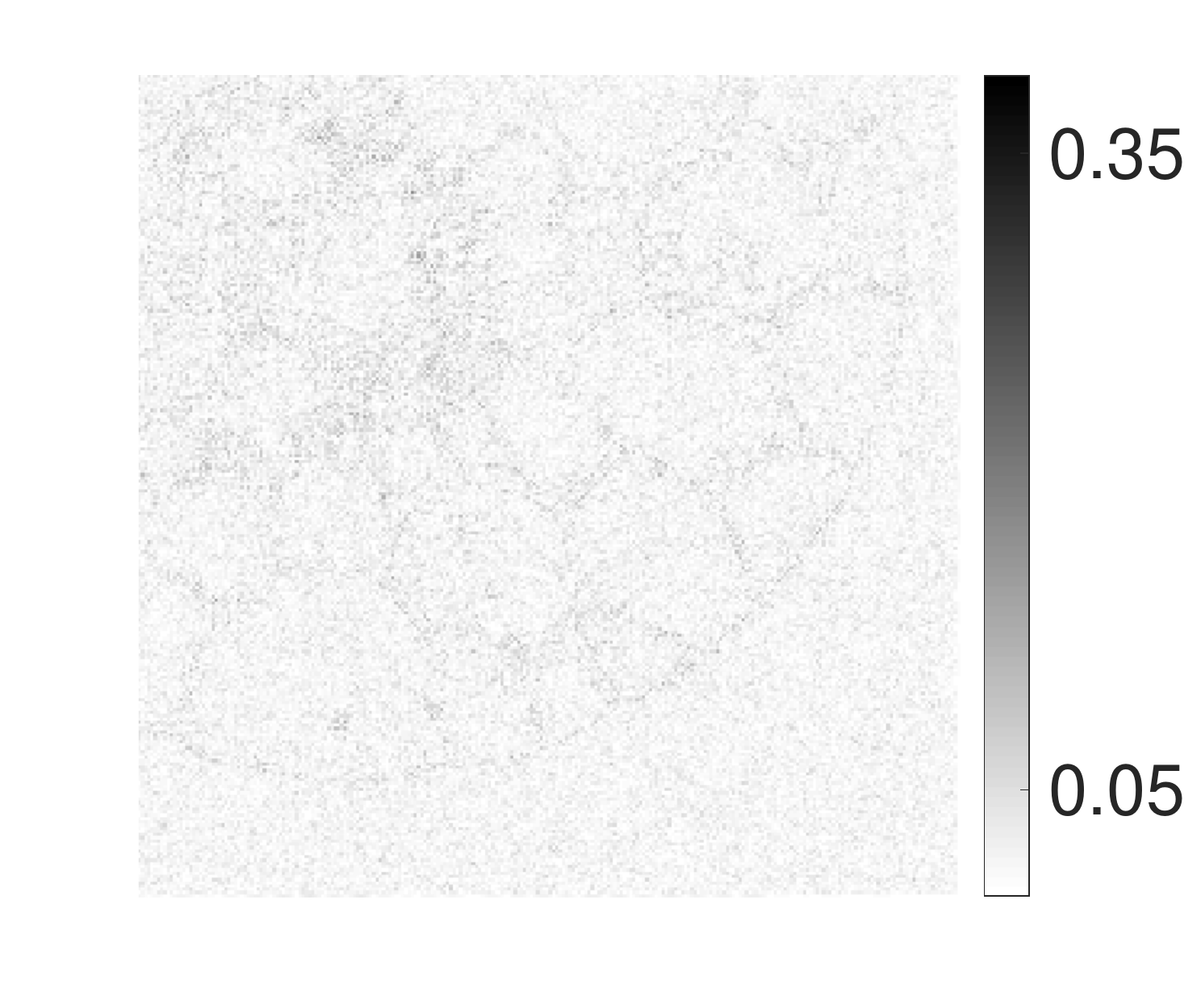}\\
(d)HyBR-DP  & (e) HyBR-recycle-DP-RBD  & (f) HyBR-recycle-DP-sparse \\
\end{tabular}
\caption{Absolute error images (in inverted colormap) for the grain image for WGCV and DP.}
\label{pic:deblur_image}
\end{figure}

Finally, we use this example to numerically demonstrate the bound derived in Theorem \ref{thm:bound}. In Figure \ref{fig:bound}, we provide the norm of the residual for the transformed problem and the derived upper bound from \eqref{eq:bound} for one cycle of HyBR-recycle after compression with TSVD.  At each iteration, the same regularization parameter was used to compute the residual from HyBR-recycle (i.e., $\norm[2]{\widehat{\bfr}_\lambda}$) and the residual from the regularized full GKB for the HyBR-recycle solution (i.e., $\norm[2]{\bfr_\lambda}$). Although the bound is an overestimate, the result shows that we do not expect the solution of HyBR-recycle after compression to be far from the solution to the regularized Tikhonov problem when using the full GKB.
\begin{figure}[h!]
    \centering
     \includegraphics[width = \textwidth]{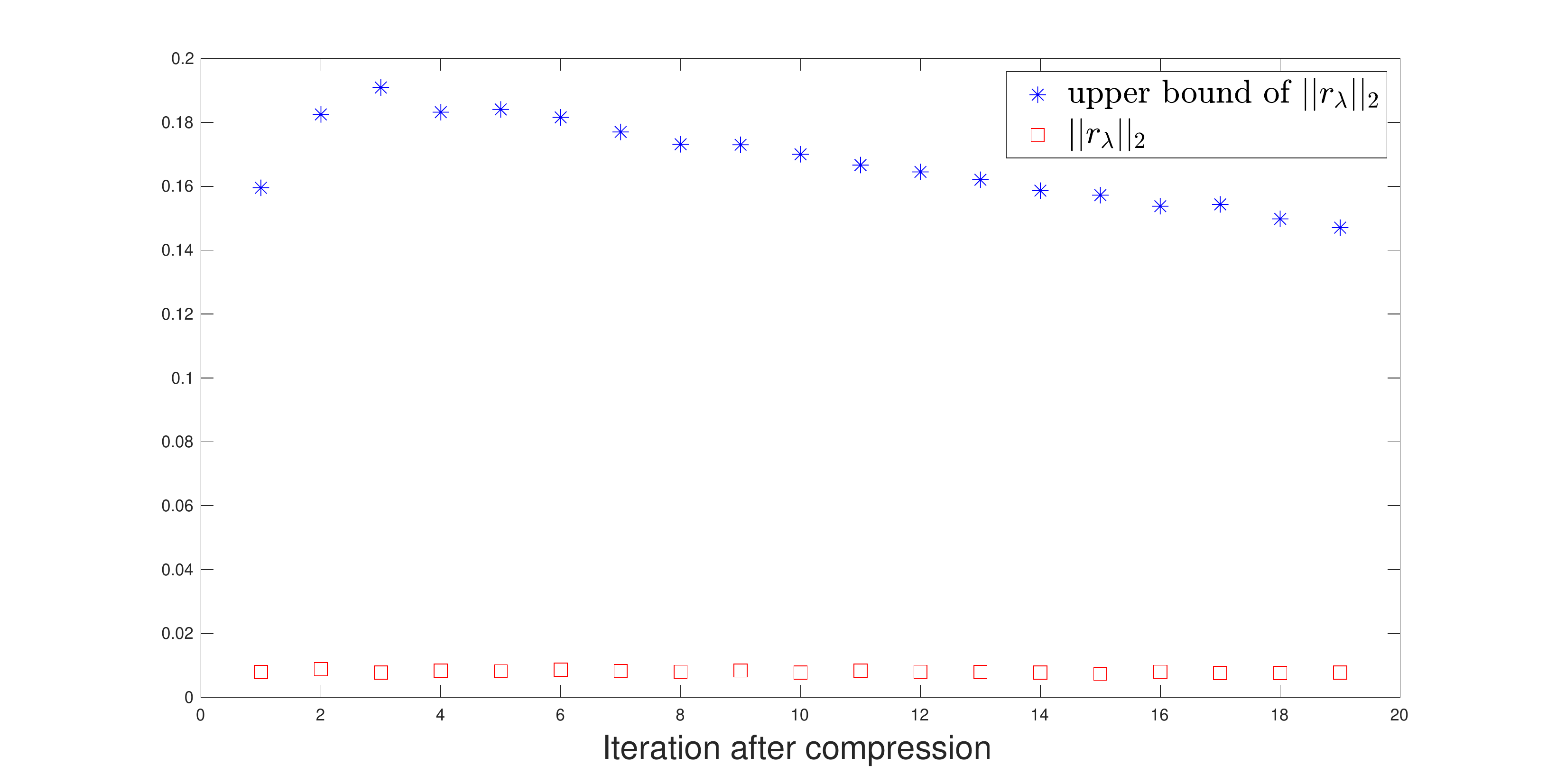}
    \caption{Illustration of bound on the residual norm derived in Theorem \ref{thm:bound}.}
    \label{fig:bound}
\end{figure}

\subsection{Tomography reconstruction examples}
\label{sec:tomo}
Next, we investigate various scenarios in tomographic reconstruction where multiple reconstruction problems must be solved, and the hybrid projection methods with recycling can be used to incorporate information (e.g., basis vectors) from previous reconstructions to solve the current reconstruction problem.
We consider two scenarios.
\begin{enumerate}
  \item In the case of dynamic or streaming data inverse problems, reconstructions must be updated as data are being collected.  This may arise in applications such as microCT, where immediate reconstructions are used as feedback to inform the data acquisition process \cite{parkinson2017machine}.
  \item Oftentimes, we must solve several reconstruction problems where the projection angles are slightly modified.  This might arise in an optimal experimental design framework where the goal is to determine the optimal angles for image formation \cite{ruthotto2018optimal} or in a sampling framework \cite{slagel2018sampled}.
\end{enumerate}

Before describing the details of the experiments, we describe four general approaches.
Assume that we have $r$ reconstruction problems,
\begin{align}
  \label{eq:tomo1}
    \min_\bfx \norm[2]{\bfA_1 \bfx - \bfb_1}^2 &+ \lambda_1^2 \norm[2]{\bfx}^2. \\
&\vdots \nonumber \\
   \label{eq:tomoi}
\min_\bfx \norm[2]{\bfA_i \bfx - \bfb_i}^2 &+ \lambda_i^2 \norm[2]{\bfx}^2. \\
\vdots \nonumber \\
 \label{eq:tomor}
\min_\bfx \norm[2]{\bfA_r \bfx - \bfb_r}^2 &+ \lambda_r^2 \norm[2]{\bfx}^2.
 \end{align}
 Depending on the problem setup and noise level, the regularization parameter for each problem $\lambda_1, \dots, \lambda_r$ may be different.  Thus, in all of our approaches, we select these regularization parameters automatically in a hybrid framework. 
\begin{enumerate}
  \item Using $\bfA_1$ and $\bfb_1,$ we run $m$ iterations of the standard Golub-Kahan bidiagonalization on \cref{eq:tomo1}, compress the computed solution vectors into $k_1-1$ orthonormal vectors $\bfW_{k_1-1} \in \mathbb{R}^{N \times (k_1-1)}$, and save matrix $\bfW_{k_1} = \begin{bmatrix}\bfW_{k_1-1} & \widecheck{\bfx}^{(1)}\end{bmatrix}$, where $$\widecheck{\bfx}^{(1)} = (\bfx^{(1)} - \bfW_{k_1 -1}\bfW_{k_1-1}^{\top}\bfx^{(1)})/\norm[2]{\bfx^{(1)} - \bfW_{k_1 -1}\bfW_{k_1-1}^{\top}\bfx^{(1)}}$$ and $\bfx^{(1)}$ is the corresponding solution of \cref{eq:tomo1}. Then for a subsequent problem with $\bfA_i$ and $\bfb_i$ ($1 < i < r$), we use $\bfW^{(new)}_{k_i - 1}$ obtained from the previous problem and run HyBR-recycle on \cref{eq:tomoi} saving matrix $\bfW^{(new)}_{k_i}$. Finally we solve \cref{eq:tomor} 
 using HyBR-recycle starting with $\bfW^{(new)}_{k_r - 1}$.
 \item We run a standard HyBR method with automatic regularization parameter selection on any of the $r$ reconstruction problems (e.g., the last one).
  \item For comparison, we provide the results for HyBR with automatic regularization parameter selection on the entire problem,
  \begin{equation}
    \label{eq:tomoproblemall}
    \min_\bfx \norm[2]{\begin{bmatrix}\bfA_1\\ \vdots \\ \bfA_r \end{bmatrix} \bfx - \begin{bmatrix} \bfb_1 \\ \vdots \\ \bfb_r \end{bmatrix}}^2 + \lambda^2 \norm[2]{\bfx}^2.
  \end{equation}
  We remark that in streaming scenarios, this can be considered as the ideal case and should produce the solution with overall smallest relative error.  However, we assume that this cannot be computed in practice and use it merely as a comparison.
  \item We take an average of solutions computed from \cref{eq:tomo1} to \cref{eq:tomor} independently; this is common in tomography.
\end{enumerate}

\subsubsection{Streaming data}
\label{sec:shepp2}
For the first experiment, we use the parallel tomography example from IRTools \cite{gazzola2017ir,hansen2018air}, where the true image is a $1024 \times 1024$ Shepp-Logan phantom so $\bfx_\true \in \bbR^{1024^2}$. The true image can be found in~\cref{fig:tomo_ex1} (a). We test two cases for this example, where the first case has two reconstruction problems ($r=2$) and the second one has four reconstruction problems ($r=4$).

{\bf Case 1:} We assume that data is being streamed such that the first reconstruction problem corresponds to $90$ equally spaced projection angles between $0^{\circ}$ and $89^{\circ}$, and the second problem corresponds to $90$ equally spaced projection angles between $90^{\circ}$ and $179^{\circ}$.  In terms of dimensions, $\bfA_1,\bfA_2 \in \bbR^{90\cdot1448 \times 1024^2}$ and $\bfb_1, \bfb_2 \in \bbR^{90\cdot1448}$. The noise level for each observed image is $0.02,$ which means that $\frac{\norm[2]{\bfepsilon_i}}{\norm[2]{\bfA_i \bfx_\true}} = 0.02$ for $i=1,2$.
The observations are provided in \cref{fig:tomo_ex1}. The limit of the storage of solution basis vectors (each $1,048,576 \times 1$) is assumed to be $50$. The stopping criteria for compression are defined by the maximum number of the basis vectors we want to keep after each compression, which we assume here to be $10$, and a tolerance, which we assume to be $\varepsilon_{tol} = 10^{-6}$.
\begin{figure}[htbp]
\centering
  \begin{tabular}{ccc}
  \includegraphics[width = .2\textwidth]{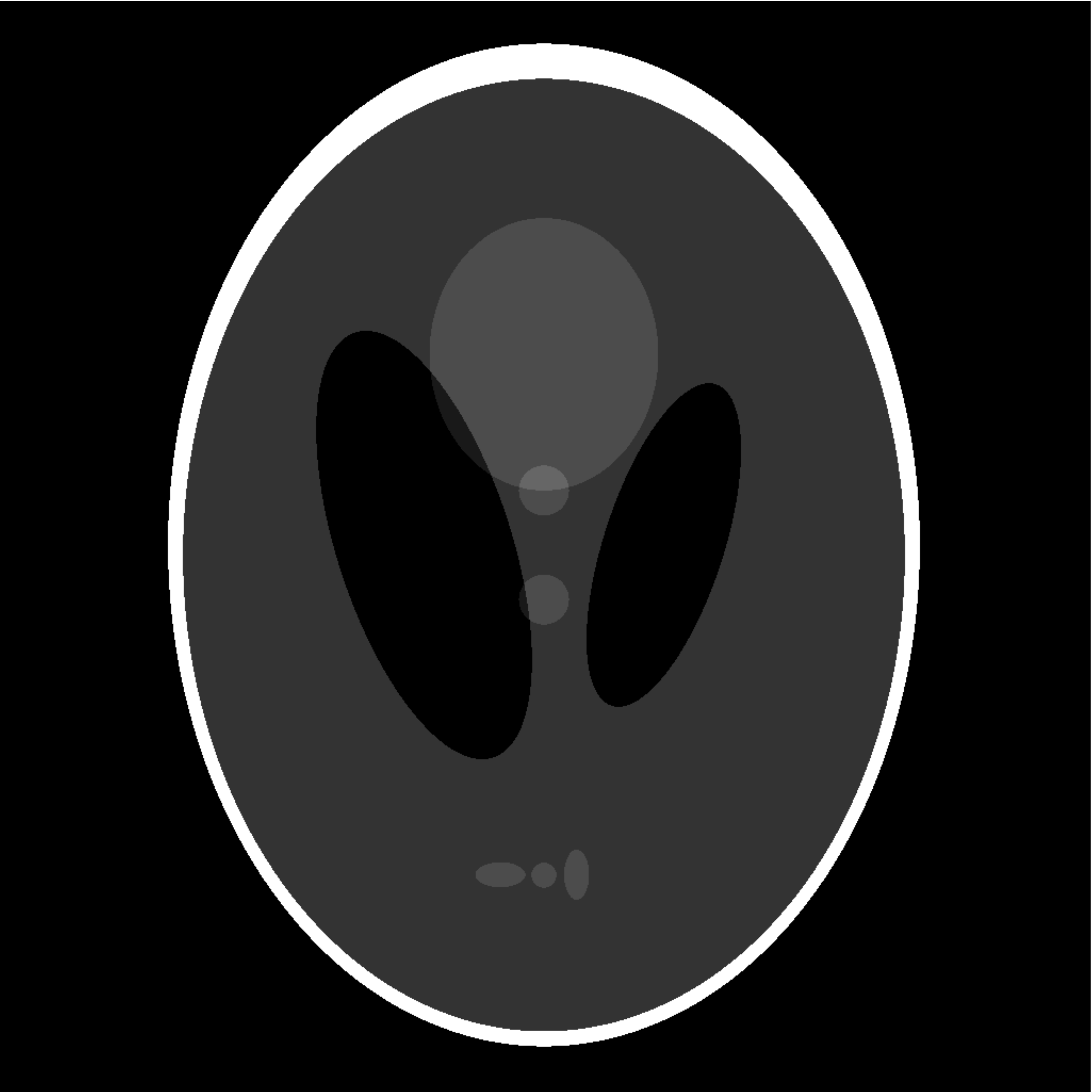} &
  \includegraphics[width = .4\textwidth]{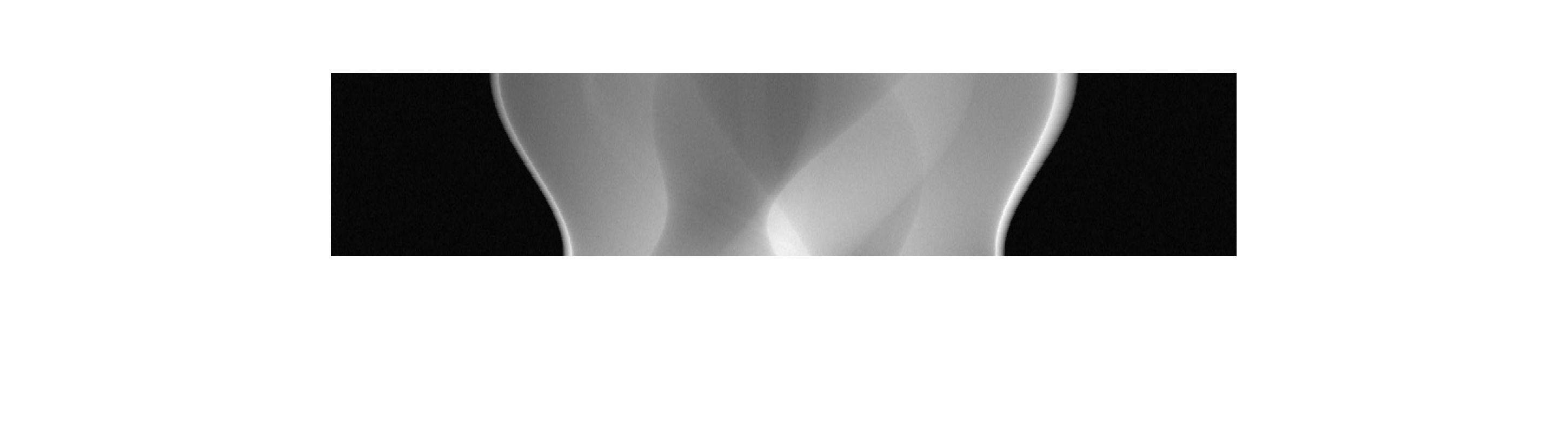} &
  \includegraphics[width = .4\textwidth]{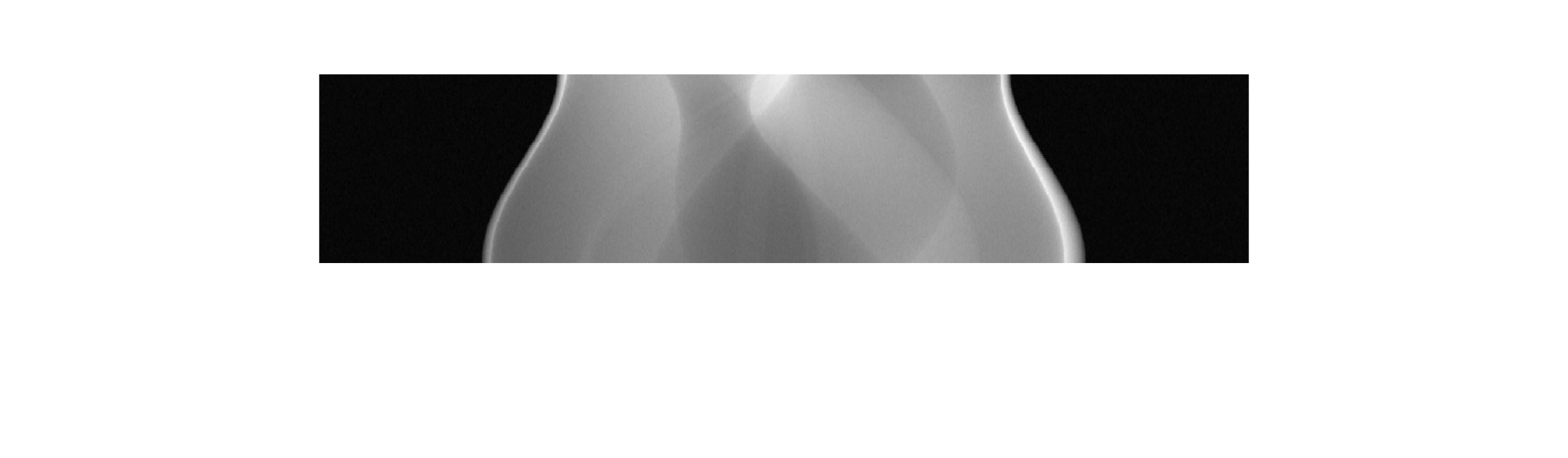} \\
(a) True image & (b) $\bfb_1$: $0^{\circ}-89^{\circ}$ & (c) $\bfb_2$: $90^{\circ}-179^{\circ}$
\end{tabular}
  \caption{Streaming tomography example, case 1. The true image is provided in (a), along with two observed sinograms $\bfb_1, \bfb_2$ corresponding to projections taken at $1^\circ$ intervals from $0^{\circ}-89^{\circ}$ and $90^{\circ}-179^{\circ}$ respectively.}
  \label{fig:tomo_ex1}
\end{figure}

A plot of the relative reconstruction error norms per iteration for the four approaches described above is provided in \cref{fig:tomorel2}. The average of images is not an iterative process, but the relative error norm corresponding to the average solution is denoted with a dotted line, for comparison.  We see that HyBR-recycle produces reconstructions with relative reconstruction error norms that are smaller than both HyBR with the 2nd dataset and the average of images, demonstrating that the inclusion of the $11$ basis images in the HyBR-recycle framework was beneficial.  Notice that HyBR with all of the data produces reconstructions with smallest reconstruction error norm, as expected. We also compare to the HEB approach without regularization.
The main point of this comparison is to demonstrate that HEB is not as accurate as HyBR-recycle since the generated basis is not improved.

\begin{figure}[htbp]
\centering
  \includegraphics[width = 0.8\textwidth]{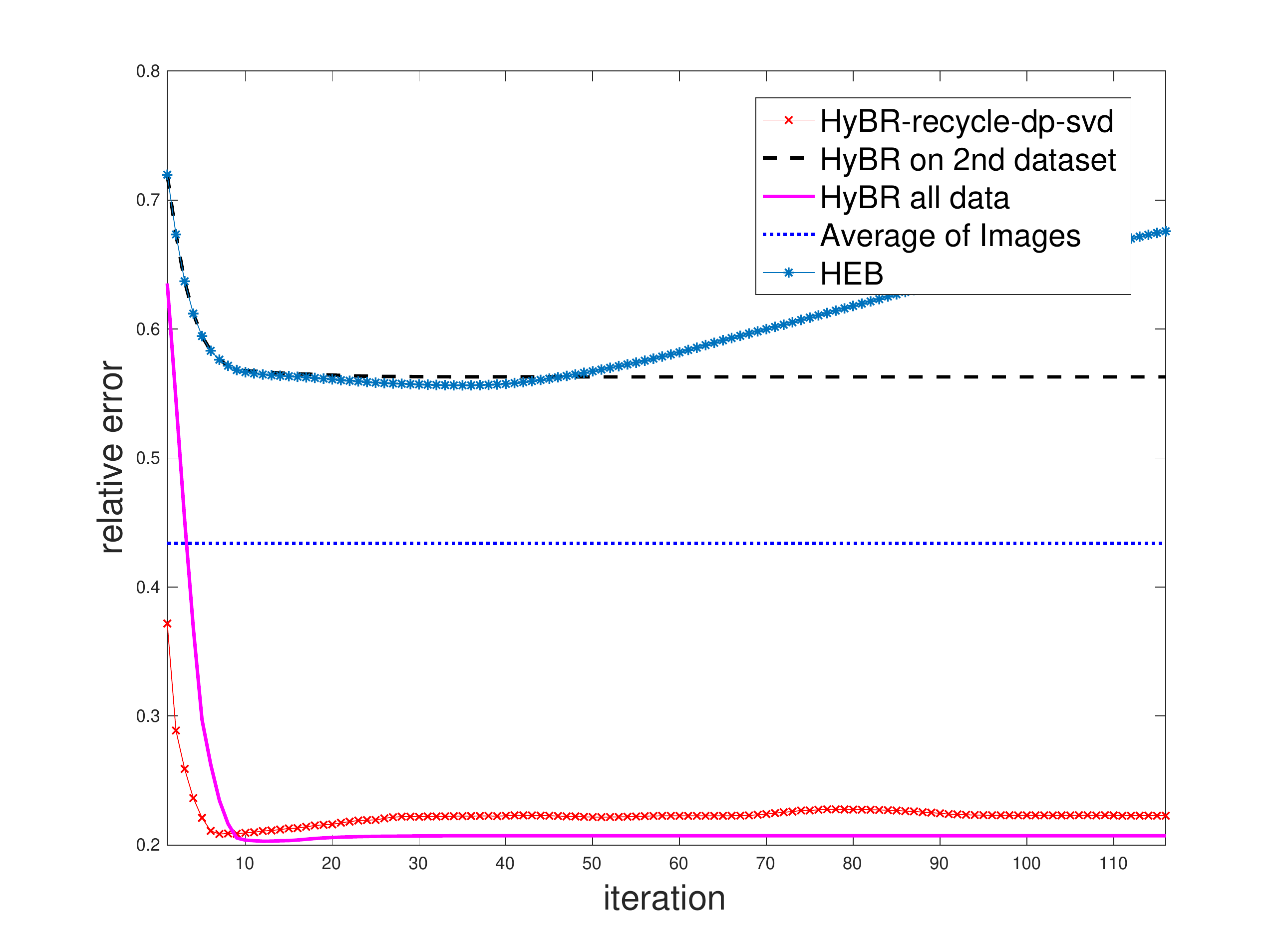}
  \caption{Streaming tomography example, case 1: Relative reconstruction error norms.}
  \label{fig:tomorel2}
\end{figure}

Image reconstructions with corresponding absolute error images are provided in \cref{fig:tomorecon2}. In terms of CPU time, HyBR-recycle-dp-svd took $79.85$ sec, HyBR with the 2nd dataset took $98.21$ sec and HyBR with the entire dataset took $181.64$ sec.
\begin{figure}[htbp]
\centering
  \includegraphics[width = \textwidth]{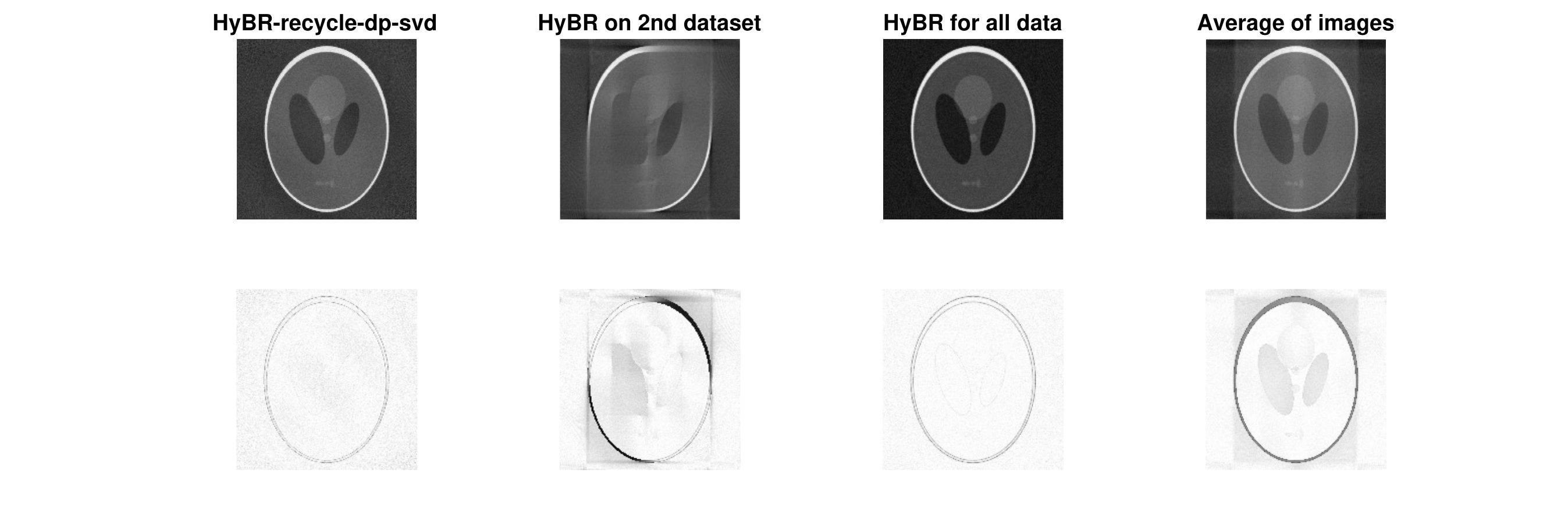}
  \caption{Streaming tomography example, case 1: reconstructions and error images (in inverted colormap).}
  \label{fig:tomorecon2}
\end{figure}

{\bf Case 2:} Next we assume that data is being streamed such that we have four reconstruction problems corresponding to $45$ equally spaced projection angles in $0^{\circ} - 44^{\circ}, 45^{\circ} - 90^{\circ}, 91^{\circ} - 135^{\circ},$ and $136^{\circ} - 179^{\circ}$ respectively. In terms of dimensions, $\bfA_1,\bfA_2,\bfA_3,\bfA_4 \in \bbR^{45\cdot1448 \times 1024^2}$ and $\bfb_1, \bfb_2, \bfb_3, \bfb_4 \in \bbR^{45\cdot1448}$. The noise level for each observed image is $0.02,$ which means that $\frac{\norm[2]{\bfepsilon_i}}{\norm[2]{\bfA_i \bfx_\true}} = 0.02$ for $i=1,2,3,4$.
The observed sinograms are provided in \cref{fig:tomo_ex2}. This time, we limit the storage of the solution basis vectors (each still $1,048,576 \times 1$) to be $15$. For the stopping criteria for compression, we set the maximum number of saved vectors after each compression to be 5 and the compression tolerance to be $\varepsilon_{tol} = 10^{-6}$. 
For the first reconstruction problem, we run standard HyBR for $m=15$ iterations.
\begin{figure}[htbp]
\centering
  \begin{tabular}{cccc}
  \includegraphics[width = .25\textwidth]{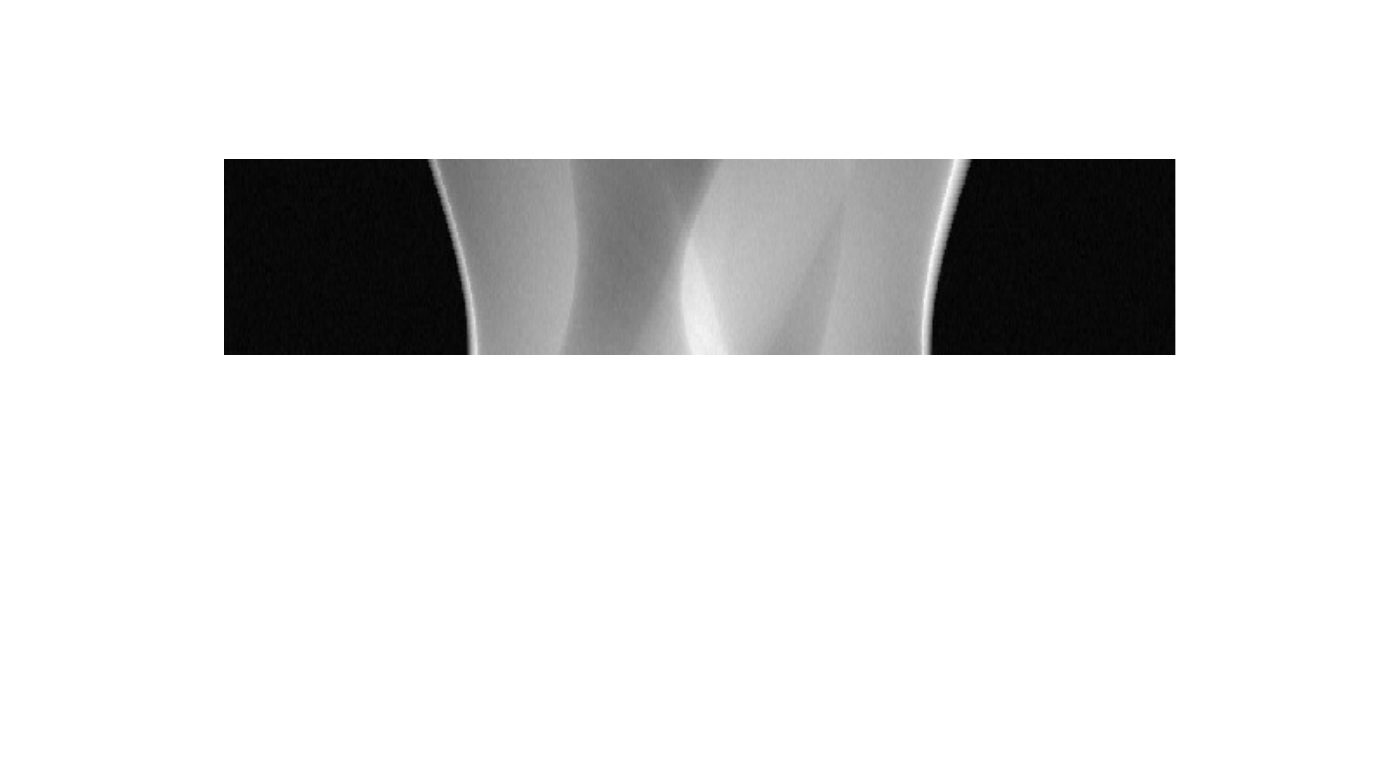} &
  \includegraphics[width = .2\textwidth]{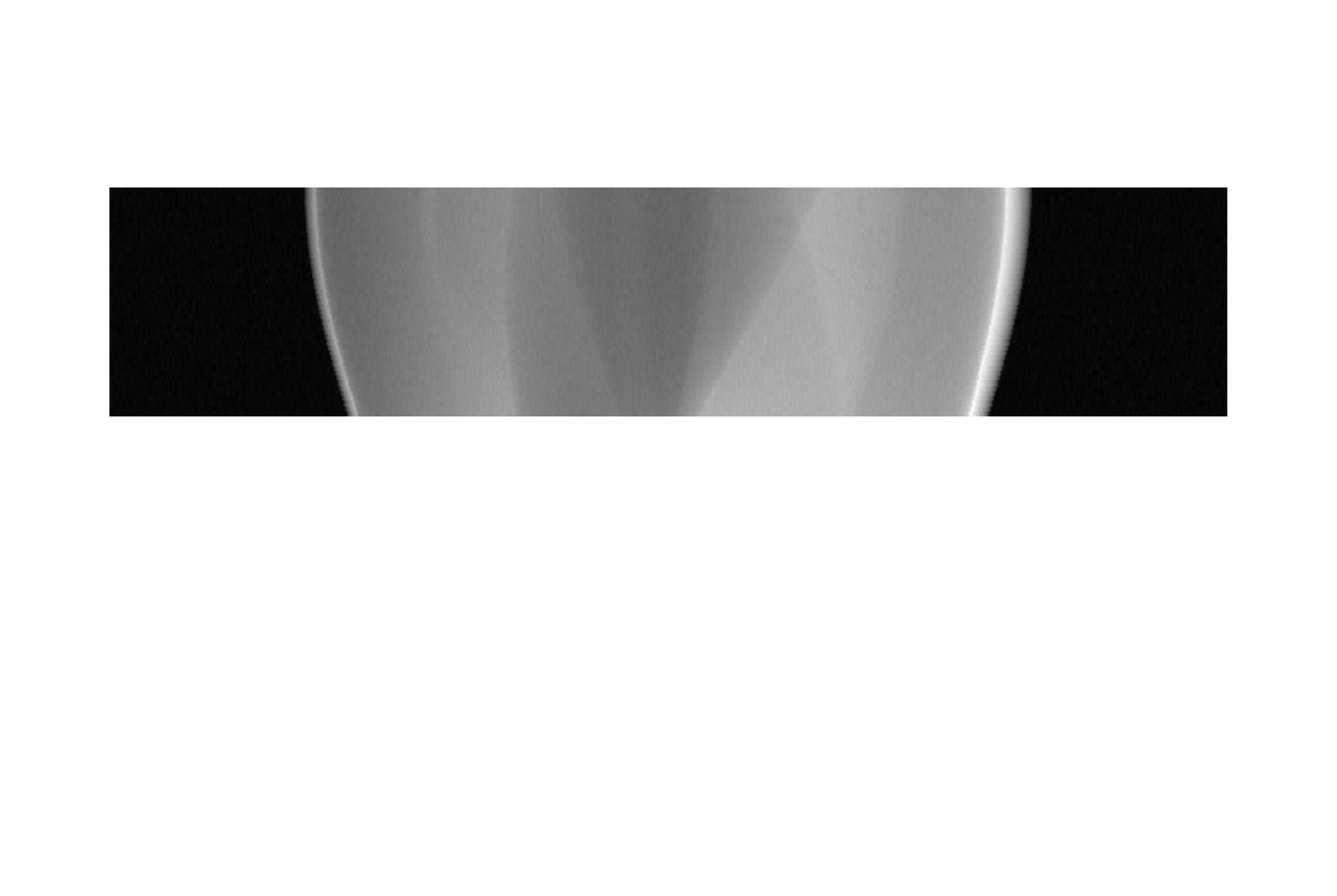} &
  \includegraphics[width = .2\textwidth]{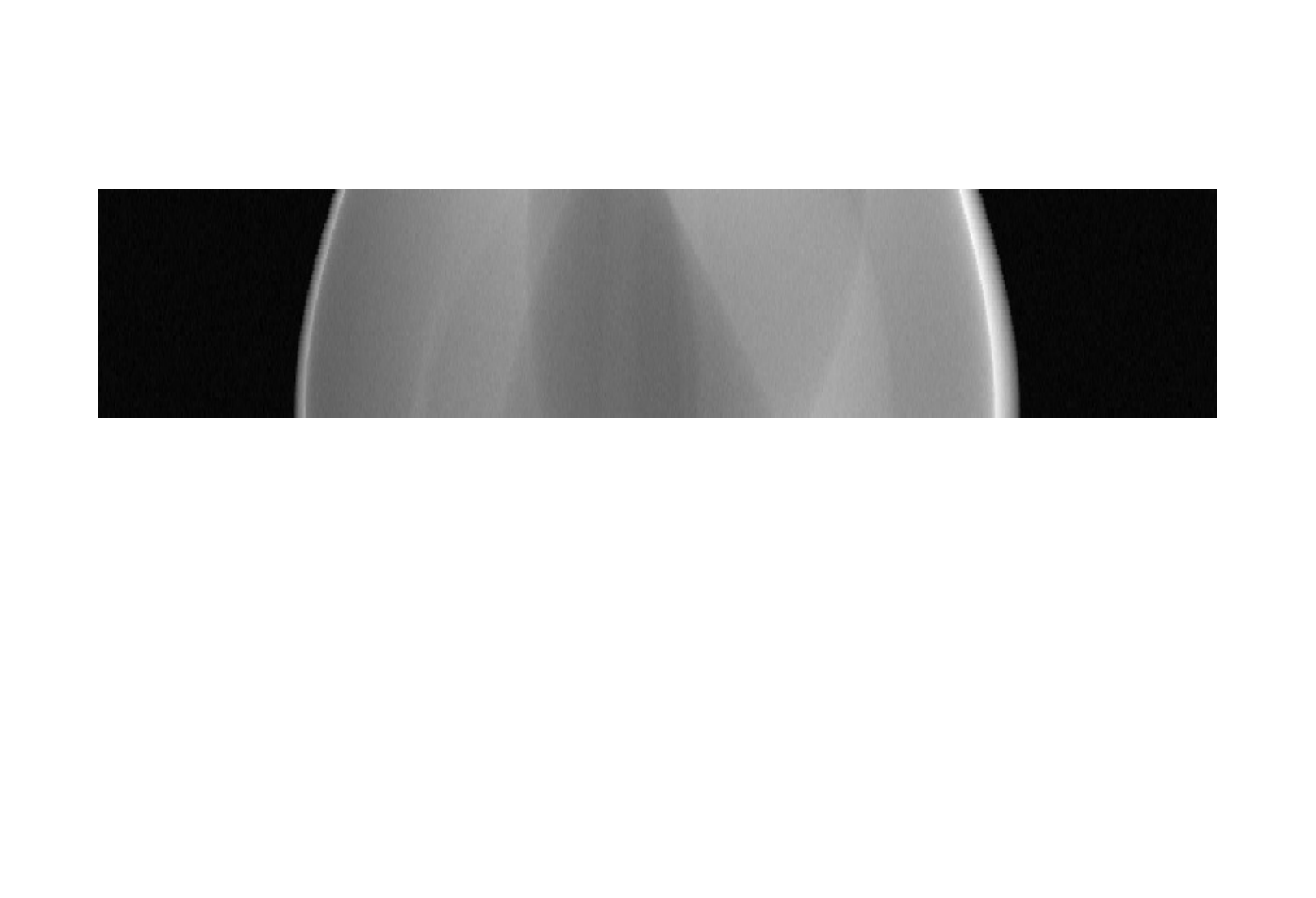} &
  \includegraphics[width = .2\textwidth]{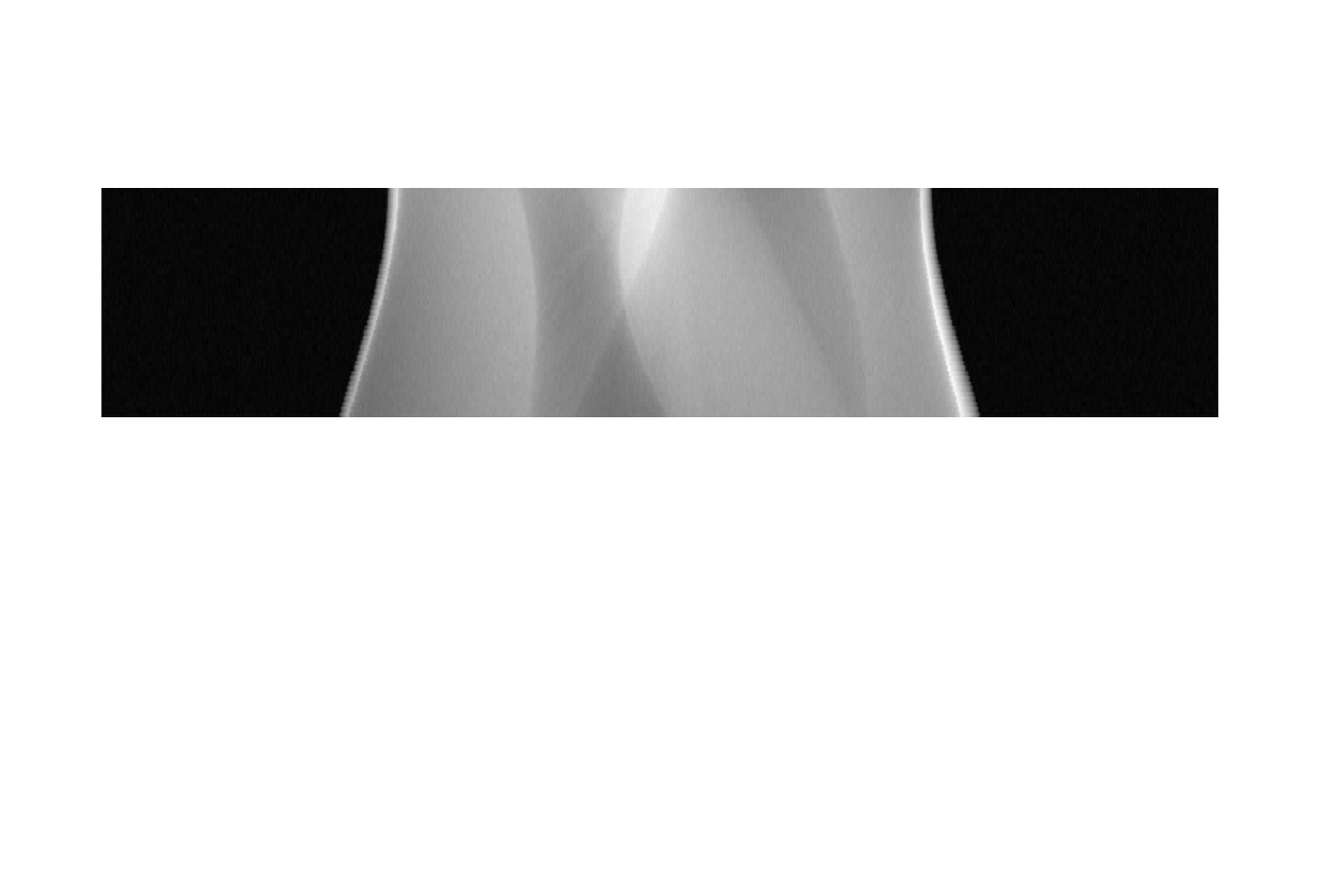} \\
(a) $\bfb_1$: $0^{\circ}-44^{\circ}$ & (b) $\bfb_2$: $45^{\circ}-89^{\circ}$ & (c) $\bfb_3$: $90^{\circ}-134^{\circ}$& (d) $\bfb_4$: $135^{\circ}-179^{\circ}$
\end{tabular}
  \caption{Streaming tomography example, case 2. The true image is provided in Figure \ref{fig:tomo_ex1} (a), and the four observed sinogram images are provided here.}
  \label{fig:tomo_ex2}
\end{figure}

A plot of the relative reconstruction error norms per iteration is provided in \cref{fig:tomorel3}.  For the HyBR-recycle-dp-svd method,
we provide relative reconstruction error norms from the second to the last reconstruction problem (that is, the $1st - 14th$ iterations correspond to the first problem with standard HyBR, the $15th - 31st$ iterations correspond to the second problem, the $32nd - 48th$ iterations correspond to the third problem, and the $49th - 110th$ iterations correspond to the fourth problem). 
\begin{figure}[htbp]
\centering
  \includegraphics[width = 0.85\textwidth]{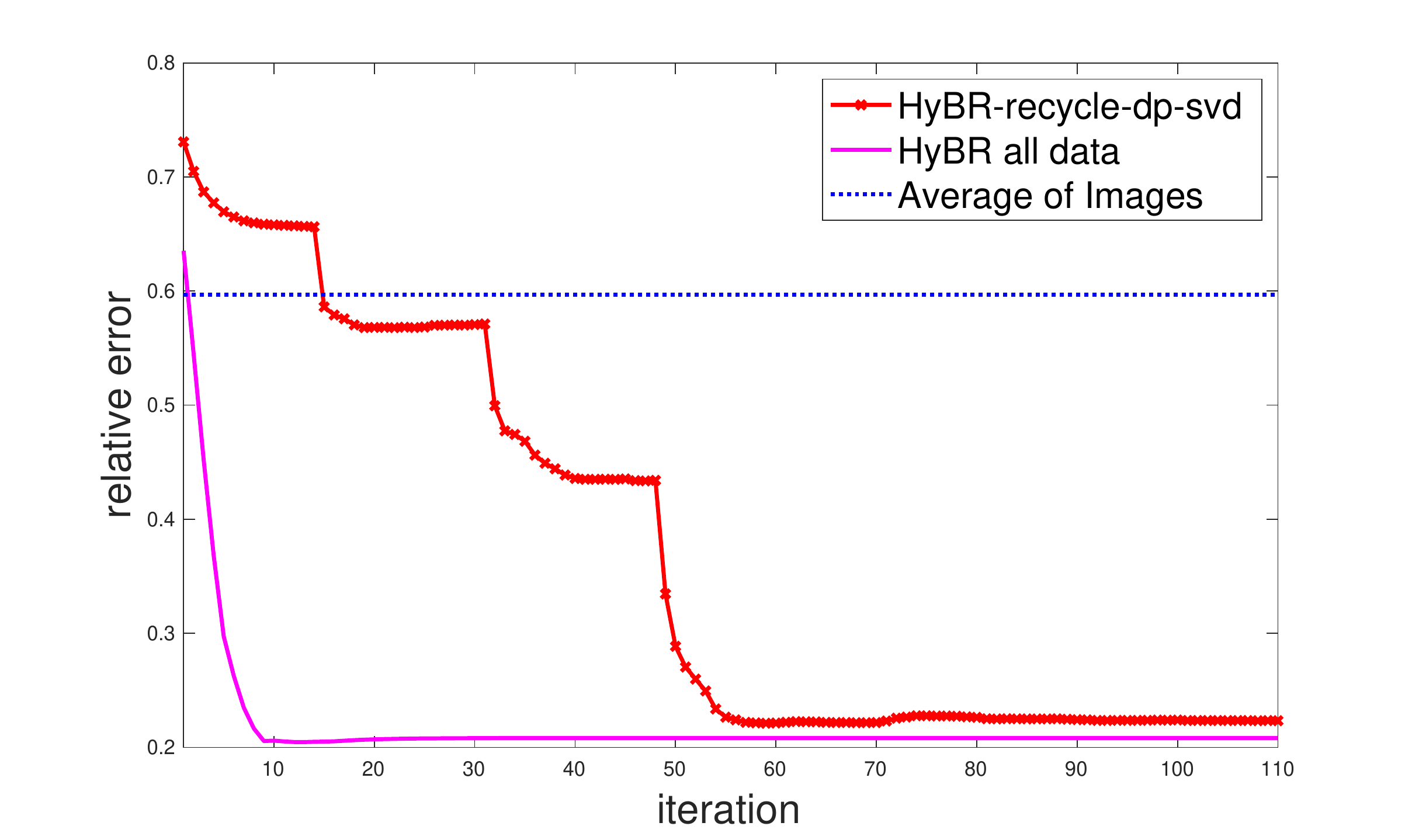}
  \caption{Streaming tomography example, case 2: Relative reconstruction error norms.}
  \label{fig:tomorel3}
\end{figure}
Image reconstructions with absolute error images are provided in \cref{fig:tomorecon2}. In terms of overall CPU time, HyBR-recycle took $43.10$ sec, HyBR with one dataset took $96.72$ sec (i.e., the time to compute an average of solutions), and HyBR with the entire dataset took $226.70$ sec. 

We observe that HyBR-recycle produces reconstructions with relative reconstruction error norms that are much smaller than the average of images. HyBR with all of the data is the most accurate approach, as expected, but it is more costly.  Furthermore, for large-scale \textit{sequential} problems where it is not desirable to wait until all data have been collected to perform reconstruction, HyBR-recycle provides an efficient approach to compute regularized solutions with comparable accuracy.
\begin{figure}[htbp]
\centering
  \includegraphics[width = \textwidth]{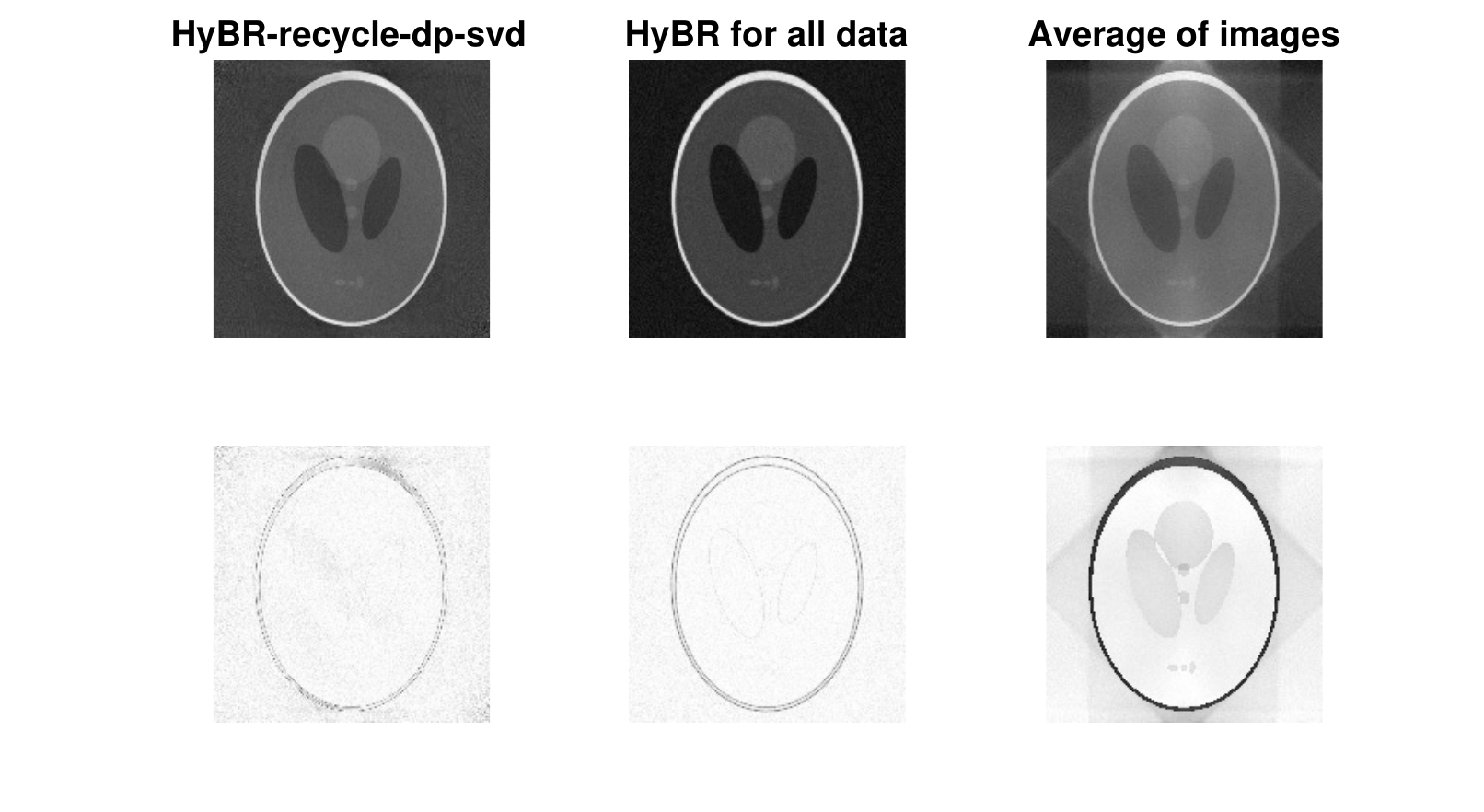}
  \caption{Streaming tomography example, case 2: reconstructions and error images (in inverted colormap).}
  \label{fig:tomorecon3}
\end{figure}
\begin{figure}[htbp]
\centering
  \includegraphics[width = \textwidth]{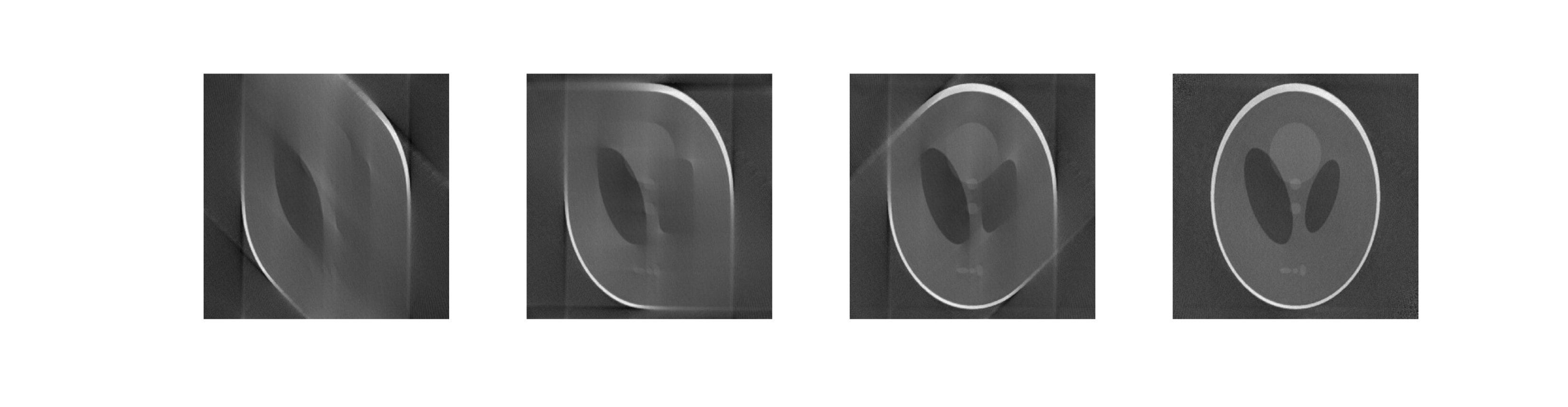}
  \caption{Streaming tomography example, case 2: reconstructions from the HyBR-recycle-dp-svd method at iterations $14,$ $31$, $48$, and $110$ respectively.}
  \label{fig:tomoreconrecycle3}
\end{figure}

\subsubsection{Tomographic reconstruction of a walnut}
\label{sec:realdata}
To test the practicality of the hybrid projection methods with recycling, we present reconstruction results from actual tomographic x-ray projection data from a walnut \cite{hamalainen2015tomographic}. This example consists of four reconstruction problems, where the projection angles for each problem are slightly modified. The need to solve multiple problems with modified projection angles arises in various scenarios including optimal experimental design frameworks \cite{ruthotto2018optimal} and optimization to correct for uncertain angles \cite{riis2019new}. We investigate the use of hybrid projection methods with recycling to re-use the solution and solution basis vectors acquired from one reconstruction to efficiently solve another reconstruction problem with modified angles. Since the data for this example are taken from real experiments, the true solution is not available.

We are given a set of 120 fan-beam projections taken at an angular step of three degrees. The number of rays per projection is 328. The first system corresponds to $30$ equally-spaced projection angles between $3^{\circ}$ and $351^{\circ}$ degrees, which gives $\bfA_1 \in \mathbb{R}^{30 \cdot 328 \times 328^2}$ and $\bfb_1 \in \mathbb{R}^{30 \cdot 328 }$.  The second system is generated using $30$ equally spaced projection angles between $6^{\circ}$ and $354^{\circ}$ degrees, which gives $\bfA_2 \in \mathbb{R}^{30 \cdot 328 \times 328^2}$ and $\bfb_2 \in \mathbb{R}^{30 \cdot 328 }$. The third system is generated using $30$ equally spaced projection angles between $9^{\circ}$ and $357^{\circ}$ degrees, which gives $\bfA_3 \in \mathbb{R}^{30 \cdot 328 \times 328^2}$ and $\bfb_3 \in \mathbb{R}^{30 \cdot 328 }$. The fourth system is generated using $30$ equally spaced projection angles between $12^{\circ}$ and $360^{\circ}$ degrees, which gives $\bfA_4 \in \mathbb{R}^{30 \cdot 328 \times 328^2}$ and $\bfb_4 \in \mathbb{R}^{30 \cdot 328 }$. 

For HyBR-recycle, we initialize with
$m=100$ iterations of the standard Golub-Kahan bidiagonalization with $\bfA_1$ and $\bfb_1$ to get $\bfx_{100}$, and we compress the basis vectors $\bfV_{100}$ to get $\bfW_{90}$. Then $\bfW_{91} = \begin{bmatrix}\bfW_{90} & \widecheck{\bfx}^{(1)}\end{bmatrix}$, 
where $\widecheck{\bfx}^{(1)} = (\bfx^{(1)} - \bfW_{90}\bfW_{90}^{\top}\bfx^{(1)})/\norm[2]{\bfx^{(1)} - \bfW_{90}\bfW_{90}^{\top}\bfx^{(1)}}$ and $\bfx^{(1)} = \bfx_{100}$. 
Given the initial set of basis vectors in $\bfW_{91}$, we use HyBR-recycle with the four different compression techniques described in \cref{sec:compression_approaches}.  For all of the considered methods for this problem, we allow storage for a maximum of $100$ solution vectors. For HyBR-recycle, the maximum number of vectors to save at compression is $90$ with the compression tolerance being $\varepsilon_{tol} = 10^{-6}$, and we allow two cycles of HyBR-recycle for each dataset. 

We compare these results for HyBR-recycle to HyBR with the fourth dataset, HyBR with all data, and the average of images obtained from four HyBR reconstructions.
The reconstructions of the standard approaches are obtained after $100$ iterations.  For all of these experiments, GCV is used to select the regularization parameter.  From the image reconstructions provided in \cref{fig:tomorecon4}, we observe that the lack of data for HyBR with the fourth dataset results in artifacts, whereas the average of images is quite blurry. We only provide the HyBR-recycle reconstruction using solution-oriented compression, but we remark that similar results were observed for all compression approaches. Let the HyBR for all data solution be denoted as $\bfx_{all}$, and define the relative difference as $\norm[2]{\bfx - \bfx_{all}}/\norm[2]{\bfx_{all}}$. These values are provided in \cref{table5} for the various reconstructions. 
In summary, the HyBR-recycle reconstructions contain some noise, but are overall sharper than taking an average of image reconstructions (this is clear especially around the edges), and do not suffer from artifacts from limited data.

\begin{figure}[htbp]
\hspace{-1in}
  \includegraphics[width = 1.3\textwidth]{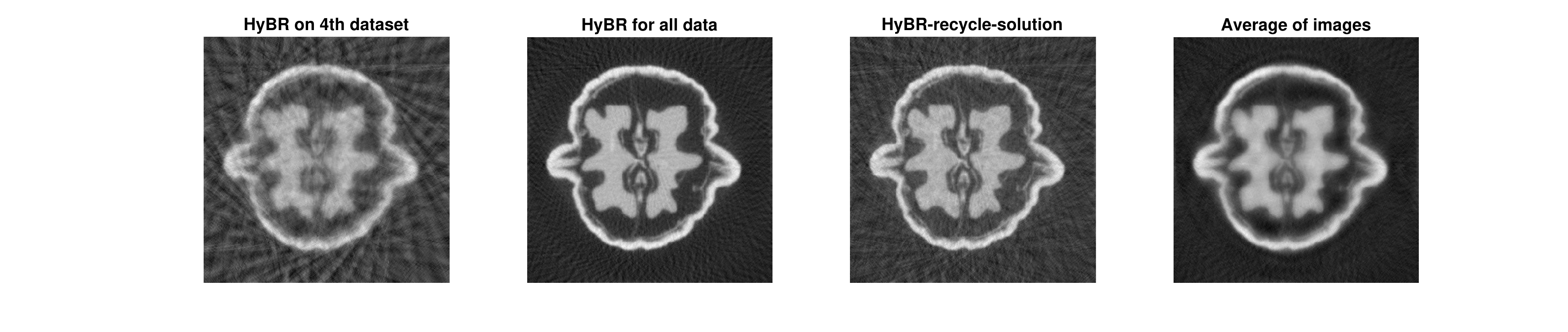}
  \caption{Tomography walnut example: Image reconstructions.}
  \label{fig:tomorecon4}
\end{figure}

\begin{table}[h!]
\centering
 \begin{tabular}{|c| c| c| c|}
 \hline
 HyBR with 4th dataset& 0.3102 & HyBR-recycle-gcv-tsvd   & 0.1814 \\
 \hline
Average of images & 0.2679 &
 HyBR-recycle-gcv-solution & 0.1841 \\
 \hline
 HyBR-recycle-gcv-rbd  & 0.1732
 & HyBR-recycle-gcv-sparse  & 0.1832 \\
 \hline
 \end{tabular}
 \caption{Tomography walnut example: Relative differences computed as $\norm[2]{\bfx - \bfx_{all}}/\norm[2]{\bfx_{all}}$ where $\bfx$ represents the numerical solution computed by HyBR, Average of images, and HyBR-recycle with different compression techniques.  All results use GCV for selecting the regularization parameter.}
 \label{table5}
\end{table}

\section{Conclusions}
\label{sec:conclusions}
In this paper, we have described Golub-Kahan-based hybrid projection methods with recycling that use compression and recycling to overcome potential memory limitations.  We described a variety of problems that can be solved using these methods.
For example, we can solve very large problems where the number of basis vectors becomes too large for memory storage.  These methods can be used to efficiently solve a sequence of regularized problems (e.g., changing regularization terms or nonlinear solvers) and problems with streaming data.  We emphasize that the general approach can also be used in an iterative fashion to improve on existing solutions. The main computational benefits include improved regularized solutions, reduced memory requirements, and automatic selection of the regularization parameter.
Theoretical results show connections between projected problems and relationships between regularized solutions, and numerical results demonstrate that our approach can efficiently and accurately solve various inverse problems from image processing.

\bibliographystyle{abbrv}
\bibliography{inversebiblio}

\end{document}